\numberwithin{equation}{section}
\theoremstyle{plain}
\newtheorem{theorem}{Theorem}[section]
\newtheorem{corollary}[theorem]{Corollary}
\newtheorem{proposition}[theorem]{Proposition}
\newtheorem{theorem-definition}[theorem]{Theorem and Definition}
\newtheorem{lemma-definition}[theorem]{Lemma and Definition}
\newtheorem*{theorem*}{theorem}
\newtheorem*{theoremA*}{Theorem A}
\newtheorem*{corollaryB*}{Corollary B}
\newtheorem*{theoremC*}{Theorem C}
\newtheorem*{satellite*}{Satellite to the Main Theorem}
\newtheorem{observation}[theorem]{Observation}
\newtheorem{defthm}[theorem]{Definition and Theorem}
\newtheorem{criterion}[theorem]{Criterion}
\newtheorem{corterm}[theorem]{Corollary and Terminology}
\theoremstyle{definition}
\newtheorem{definition}[theorem]{Definition}
\newtheorem*{definition*}{Definition}
\newtheorem{defobs}[theorem]{Definition and Observation}
\newtheorem*{observation*}{Observation}
\newtheorem{example}[theorem]{Example}
\newtheorem{remarks}[theorem]{Remarks}
\newtheorem*{remark*}{Remark}
\newtheorem*{specialcases*}{Special cases}
\newtheorem*{termcomm*}{Terminology and comments}
\newtheorem*{step1*}{Step 1. Skeleta of modules}
\newtheorem*{step2*}{Step 2. Generic modules for the components of $\layS$}
\newtheorem*{return2.9*}{Return to Example \ref{ex2.9}}
\newtheorem*{param*}{Parametrizing the  $\bd$-dimensional modules with top $T$}
\newtheorem*{param2*}{Parametrizing all $\bd$-dimensional modules}
\newtheorem*{kqmod*}{$KQ$-modules of arbitrary Loewy length}
\newtheorem{algorcomm}[theorem]{Algorithmic Comments}
\newtheorem*{return8.12*}{Return to Example \ref{ex8.12}}
\newtheorem*{prog1*}{(1)}
\newtheorem*{prog2*}{(2)}
\def\edge{\ar@{-}}
\def\dttdar{\ar@{.>}}
\def\drbl{\save+<0ex,-2ex> \drop{\bullet} \restore}
\def\horizpool#1{  \save[0,0]+(0,3);[0,0]+(0,3) **\crv{~*=<2.5pt>{.} [0,#1]+(0,3) &[0,#1]+(3,3) &[0,#1]+(3,0) &[0,#1]+(3,-3) &[0,#1]+(0,-3) &[0,0]+(0,-3) &[0,0]+(-3,-3) &[0,0]+(-3,0) &[0,0]+(-3,3)}\restore }
\def\dashedge{\ar@{--}}
\def\dshdar{\ar@{-->}}
\def\dropdown#1{\save+<0ex,-4ex> \drop{#1} \restore}
\def\dropup#1{\save+<0ex,4ex> \drop{#1} \restore}
\def\dropvert#1#2{\save+<0ex,#1ex>\drop{#2}\restore}
\def\fatedge#1{\edge@<0.025pc>[#1] \edge@<-0.025pc>[#1] \edge@<0.05pc>[#1]  \edge@<-0.05pc>[#1] \edge[#1]}
\def\mbull{\ar@{}[r]|{\bullet}}
\def\dbull{*{\bullet}+0}
\def\vsubseteq{\hbox{$\bigcup$\kern0.1em\raise0.05ex\hbox{$\textstyle|$}}}
\def\smvsubseteq{\hbox{$\ssize\bigcup$\kern0.03em\raise0.05ex\hbox{$\ssize|$}}}
\def\rtri{\,\lower.5ex\vbox{\hrule width.5pt height2ex} \kern-.1ex\diagdown \kern-2ex\lower.5ex\vbox{\hrule width1.9ex height.5pt}}
\def\rtriangle{|\kern-.2em\backslash \kern-1ex\lower.6ex\vbox{\hrule width.35em height.5pt}}
\def\la{{\Lambda}}
\DeclareMathOperator \trunc {trunc}
\def\latrunc{\la_{\trunc}}
\def\lamod{\Lambda\mbox{\rm-mod}}
\def\AA{{\mathbb A}}
\def\CC{{\mathbb C}}
\def\DD{{\mathbb D}}
\def\EE{{\mathbb E}}
\def\PP{{\mathbb P}}
\def\SS{{\mathbb S}}
\def\ZZ{{\mathbb Z}}
\def\NN{{\mathbb N}}
\def\QQ{{\mathbb Q}}
\DeclareMathOperator \len {length}
\DeclareMathOperator \aut {Aut}
\DeclareMathOperator \Aut {Aut}
\DeclareMathOperator \Top {top}
\DeclareMathOperator \soc {soc}
\DeclareMathOperator \rank {rank}
\DeclareMathOperator \ann {ann}
\DeclareMathOperator \nullity {nullity}
\DeclareMathOperator \pdim {p\,dim}
\DeclareMathOperator \idim {i\, dim}
\DeclareMathOperator \strt {start}
\DeclareMathOperator \term {end}
\DeclareMathOperator \GL {GL}
\DeclareMathOperator \Img {Im}
\DeclareMathOperator \Seq {\mathbf {Seq}}
\DeclareMathOperator \Ext {Ext}
\DeclareMathOperator \ext {ext}
\DeclareMathOperator \End {End}
\DeclareMathOperator \Filt {\mathbf{Filt}}
\DeclareMathOperator \udim {\underline{dim}}
\def\A{{\mathcal A}} 
\def\B{{\mathcal B}} 
\def\C{{\mathcal C}}
\def\D{{\mathcal D}}
\def\E{{\mathcal E}}
\def\G{{\mathcal G}}
\def\M{{\mathcal M}}
\def\U{{\mathcal U}}
\def\S{{\sigma}}
\def\bA{\mathbf {A}}
\def\bB{\mathbf {B}}
\def\bP{\mathbf {P}}
\def\bd{\mathbf {d}}
\def\bt{\mathbf {t}}
\def\Phat{\widehat{P}}
\def\Qhat{\widehat{Q}}
\def\SStilde{\widetilde{\SS}}
\def\Gtilde{\widetilde{G}}
\def\bP{\mathbf{P}}
\def\SShat{\widehat{\SS}}
\def\SStilde{\widetilde{\SS}}
\def\ehat{\widehat{e}}
\def\uhat{\widehat{u}}
\def\bdhat{\widehat{\bd}}
\def\lahat{\widehat{\la}}
\def\alphahat{\widehat{\alpha}}
\def\Chat{\widehat{C}}
\def\Jhat{\widehat{J}}
\def\Mhat{\widehat{M}}
\def\Phat{\widehat{P}}
\def\Qhat{\widehat{Q}}
\def\That{\widehat{T}}
\def\uhat{\widehat{u}}
\def\bz{\mathbf{z}}
\DeclareMathOperator \rep {\mathbf{Rep}}
\DeclareMathOperator \Rep {\mathbf{Rep}}
\DeclareMathOperator \layS {\Rep(\S)}
\DeclareMathOperator \laySS {\Rep \SS}
\newcommand \modlad {\Rep_\bd(\Lambda)}
\DeclareMathOperator \modlasd {{\Rep}_{d}(\Lambda)}
\DeclareMathOperator \Mod {\Rep}
\DeclareMathOperator \Hom {Hom}
\DeclareMathOperator \Gr {Gr}
\DeclareMathOperator \grasstbd {\mathfrak{Grass}^T_{\mathbf{d}}}
\def\autlap{\aut_\Lambda(P)}
\DeclareMathOperator \Gal {Gal}
\DeclareMathOperator \grass {\mathfrak{Grass}}
\DeclareMathOperator \grassS {\grass(\S)}
\DeclareMathOperator \grassSS {\grass \SS}
 \DeclareMathOperator \biggrass {GRASS}
\DeclareMathOperator \GRASS {GRASS}
\def\grassbd{\GRASS_\bd(\Lambda)}
\def\biggrassSS{\GRASS(\SS)}
\def\Gabull{\Gamma_\bullet}
\def\Znn{\ZZ_{\ge0}}
\def\hatS{\widehat{S}}
\title{Understanding finite dimensional representations generically}
\author[K. R. Goodearl]{K. R. Goodearl}
\address{
Department of Mathematics \\
University of California\\
Santa Barbara, CA 93106 \\
U.S.A.
}
\email{goodearl@math.ucsb.edu}
\author[B. Huisgen-Zimmermann]{B. Huisgen-Zimmermann}
\address{
Department of Mathematics \\
University of California\\
Santa Barbara, CA 93106 \\
U.S.A.
}
\email{birge@math.ucsb.edu}
\begin{document}
%%%%%%%%%%%%%%%%%%%%%%%%%%%%%%%%%%%%%%%%%%%%%%%%%%%

\dedicatory{Dedicated to Dave Benson on the occasion of his sixtieth birthday} 

%%\begin{abstract}    \end{abstract}

\subjclass[2010]{Primary 16G10; secondary 16G20, 14M99, 14M15}

\keywords{Varieties of representations; irreducible components; generic properties of representations}

\maketitle

%%%%%%%%%%%%%%%%%%%%%%%%%%%%%
\section{Introduction and conventions}
\label{sec1}

The complex of problems addressed in this survey aims at an analysis of the ``bulk", in a geometric sense, of finite dimensional representations of a finite dimensional algebra $\la$.  We assume the base field $K$ to be algebraically closed and $\la$ to be basic finite dimensional over $K$, whence we do not lose generality in identifying $\la$ with a path algebra modulo relations: $ \la = KQ/I$, for some quiver $Q$.   The pivotal problems originated with two groundbreaking papers of Kac in the early 1980s (\cite{KacI}, 1980, and \cite{KacII}, 1982); both focus on hereditary algebras, i.e., algebras of the form $\la = KQ$.  We excerpt a quote from the introduction to the 1982 article:

``The problem [of classifying all representations of $\la$] seems to be hopeless in general.  According to general principles of invariant theory, it is natural to try to solve a simpler problem:  Classifying the `generic' representations of a given dimension [vector] $\bd$."  

The last 35 years have shown that, while this problem is certainly \emph{simpler} than establishing an all-encompassing classification of the $\bd$-dimensional $\la$-modules for arbitrary $\bd$, it is by no means \emph{simple}.  Nor should it be viewed as an isolated problem of the type expected to find a useful solution in one fell swoop.  Rather, it constitutes a program, to be pursued long-term.  This is all the more true as the task turns significantly more intricate when one moves beyond the case $\la = KQ$.

In its strongest form, Kac's challenge calls for a rigorous classification of the modules in a dense open subset $\U$ of any of the standard parametrizing varieties $\Rep_\bd(KQ)$.  The approach that first comes to mind remains in the geometric context:  As such, it calls for specification of a nonempty open subset $\U$, which is stable under the $\GL(\bd)$-action and possesses a geometric quotient with respect to this action, such that $\U/ \GL(\bd)$ is a fine moduli space for the isomorphism classes of representations in $\U$; in intuitive terms, the task involves pinning down normal forms for the modules in $\U$ which are in ``natural" bijection with the points of $\,\U/\GL(\bd)$.  Below, we will briefly comment on such an ambitious endeavor in the more general context. The core of our overview will focus on a more modest interpretation of Kac's prompt, however.  In case $\la$ is hereditary, it calls for a list  --  representative in a sense to be spelled out  --   of ``essential", ``generic" isomorphism invariants of the $\bd$-dimensional $KQ$-modules; that is, of invariants $\bullet$ preserved by Morita self-equivalences of $KQ\text{-mod}$ (\emph{essential}) and $\bullet$ shared by the modules in a dense open subset of $\Rep_\bd(KQ)$ (\emph{generic}).  For $\la = KQ$, this is a meaningful goal, since $\Rep_\bd(KQ)$ is an affine space.  In particular, due to irreducibility, one targets  module invariants which are constant  on suitable dense open subsets of $\Rep_\bd(KQ)$; see Section \ref{sec2} for prototypes.  However, in extending Kac's idea beyond the hereditary case, one needs to take into account that $\modlad$ consists of a plethora of irreducible components in general, and that hardly any relevant condition imposed on the corresponding modules can be expected to hold across  dense subsets of all components.  Hence the quest now targets the generic representations in each of the individual components,  leading to the following program:  

\begin{prog1*}  Find the irreducible components of $\modlad$ in a representation-theoretic format, that is, in terms of module invariants which cut the components out of the  parametrizing variety.  
\end{prog1*}

\begin{prog2*} For each component $\C$ of $\modlad$, determine the \emph{essential} \emph{generic} properties of the modules ``in" $\C$.  (As above, \emph{essential} means invariant under Morita self-equivalences of $\lamod$.  Moreover, recall that a module property is \emph{generic for $\C$} if it holds for all modules in a dense open subset of $\C$.)
\end{prog2*}

The two points of the program are strongly interconnected. After all, representation-theoretically characterizing the components of $\modlad$ typically amounts to pinning down families of generic invariants which separate them, combined with an understanding of the families of values that occur.  

As for the more taxing goal we alluded to, that of rigorously classifying the modules in a suitable dense open subset of any of the components of $\modlad$:  It is not excluded from the theoretically feasible.  Indeed, a result of Rosenlicht \cite{Ros} guarantees that any irreducible variety $X$ which carries a morphic action by an algebraic group $\G$ contains a $\G$-stable dense open subset $\U$ which admits a geometric quotient modulo $\G$.  Necessarily, the dimension of the quotient $\U/\G$ equals 
$$\mu(X) := \dim X - \max\{ \dim \G.x \mid x \in X\},$$
the \emph{generic number of parameters of $X$}.  (Suppose $X$ is a component of $ \modlad$ and $\G = \GL(\bd)$. Loosely speaking, $\mu(X)$ is then the number of independent parameters appearing in the aforementioned normal forms for the modules in $\U$.)  However, this existence statement, applied to a component of $\modlad$, has limited value towards the algebraic understanding of the representations of $\la$, unless one is able to specify an appropriate open set $\U$ in  representation-theoretic terms and relate the structure of the encoded modules to the points of the geometric quotient $\U/ \GL(\bd)$.  Barring special cases, such an objective does not appear within reach at the moment.
%\medskip

Guideline through the paper:
We begin with a brief discussion of  generic module properties in Section \ref{sec2}, followed by a cursory overview of results to date in Section \ref{sec3}.  The information pertaining to the individual points of the overview will then be refined and supplemented in Sections \ref{sec4}--\ref{sec9} according to the table of contents at the end of this section.

\subsection*{Further conventions} Throughout, $J$ denotes the Jacobson radical of $\la$ and $L+1$ is an upper bound for the Loewy length of $\la$, i.e., $J^{L+1} = 0$. Let $e_1,\dots, e_n$ denote the distinct vertices of $Q$; we identify them with both the paths of length zero in $KQ$ and the corresponding primitive idempotents in $\la$. The simple module corresponding to $e_i$, namely $\la e_i/Je_i$, will be denoted by $S_i$. Paths in $KQ$ and (their images) in $\la$ are to be composed like functions, i.e., $pq$ stands for ``$p$ after $q$" in case $\strt(p) = \term(q)$, while $pq=0$ otherwise.

A \emph{dimension vector} (for $Q$ or $\la$) is any vector $\bd \in \Znn^n$, and the dimension vector of a (finitely generated) $\la$-module $M$ is the vector
$$\udim M := (\dim e_1M, \dim e_2M, \dots, \dim e_nM),$$
whose entries give the multiplicities of the simples $S_i$ as composition factors of $M$. We denote by $\modlad$ the standard affine variety parametrizing the $\bd$-dimensional $\la$-modules; it consists of the tuples $(x_\alpha)_{\alpha \in Q_1}$ in $\prod_{\alpha\in Q_1} \Hom_K\bigl( K^{d_{\strt(\alpha)}}, K^{d_{\term(\alpha)}} \bigr)$ such that the $x_\alpha$ satisfy all the relations in the ideal $I$. (As usual, $Q_1$ denotes the set of arrows of $Q$.) We write $M_x$ for the module corresponding to a point $x\in \modlad$. The sets of points in $\modlad$ cut out by  the isomorphism classes of $\bd$-dimensional $\la$-modules are precisely the orbits under the natural conjugation action of $\GL(\bd) := \prod_{1\le i\le n} \GL_{d_i}(K)$ on $\modlad$. 

A \emph{top element} of a module $M$ is an element $z \in M \setminus JM$ which is \emph{normed} by some primitive idempotent $e_i$, meaning that $z = e_i z$; in particular, $\la  (z + JM) \cong S_i$ in $M/JM$ in this case.  A \emph{full set of top elements} for $M$ is a set of top elements which induces a $K$-basis for $M/JM$.

\subsection*{Graphing} We use (layered and labeled) graphs to profile the structure of a module (or class of modules) $M$; the graphs used here are slightly simplified variants of those appearing in \cite{irredcompII} and \cite{GHZ}. They emphasize the radical layering $(J^l M/ J^{l+1} M)_{0 \le l \le L}$, pivotal in identifying the irreducible components of $\modlad$.  We first sketch the most straightforward type of graph; it is limited with regard to the linear dependencies it permits to encode. 
The vertices in layer $l$ correspond to a full set of top elements of $J^lM$, i.e., they represent the simple direct summands of the $l$-th radical layer $J^lM/ J^{l+1}M$; the label of a vertex coincides with that of the norming idempotent.  For $\alpha: e_i \rightarrow e_j$  in $Q_1$, an edge labeled $\alpha$ from a vertex $i$ in some layer $l$ to a vertex $j$ in a lower layer (= layer of higher index) communicates the action of $\alpha$ on the corresponding top element of $J^l M$ up to a scalar factor from $K^*$. For example, a graph of the form 
$$\xymatrixrowsep{1.7pc} \xymatrixcolsep{1pc}
\xymatrix{
&1 \edge@/_/[ddl]_{\alpha} \edge[d]^{\beta} &&3 \edge[d]_{\gamma} \edge@/_/[drr]_(0.6){\tau_1} \edge@/^/[drr]^{\tau_2}  \\
&2 \edge[dl]^{\delta} \edge[dr]^{\epsilon} &&2 \edge[dl]^{\delta} &&4  \\
2 &&2 
}$$
stands for a family of modules $M$ sharing the following properties:  $M/JM \cong S_1\oplus S_3$, \  $JM/J^2M \cong S_2^2 \oplus S_4$, and $JM \cong S_2^2$. Moreover, the graph tells us that a full set of top elements $z_1$, $z_2$ of $M$ with $z_1= e_1z_1$ and $z_2 = e_3z_2$ may be chosen such that
\begin{align*}
JM &= \la \beta z_1 + \la \gamma z_2 + \la \tau_1 z_2 = \la \beta z_1 + \la \gamma z_2 + \la \tau_2 z_2\ \ \ \text{and}  \\
J^2M &= \la \alpha z_1 + \la \epsilon \beta z_1 =  \la \alpha z_1 + \la \delta \gamma z_2 = \la \delta \beta z_1 + \la \epsilon \beta z_1 =  \la \delta \beta z_1 + \la \delta \gamma z_2  \, .
\end{align*}
More specifically, the graph conveys that $\alpha z_1$ is a nonzero scalar multiple of $\delta \beta z_1$ and $\epsilon \beta z_1 \in K^* \delta \gamma z_2$, while $\tau_2 z_2 \in K^*\tau_1 z_2$ and $\epsilon \gamma z_2 = 0$.

If we wish to encode linear dependencies of $3$ or more displayed elements labeled by the same simple $S_i$, the number of vertices  $i$ will in general be higher than $\dim e_i M$.  We use variants of the above types of graphs, which allow for  ``pooling" of vertices, such as:
$$\xymatrixrowsep{1.7pc} \xymatrixcolsep{1pc}
\xymatrix{
&1 \edge[dl]_{\alpha} \edge[d]^{\beta} &&3 \edge[d]_{\gamma} \edge@/_/[drr]_(0.6){\tau_1} \edge@/^/[drr]^{\tau_2}  \\
2 \save[0,0]+(0,3);[0,0]+(0,3) **\crv{~*=<2.5pt>{.} [0,0]+(3,3) &[0,1]+(0,3) &[0,1]+(3,3) &[0,1]+(3,0) &[1,0]+(3,-3) &[1,0]+(0,-3) &[1,0]+(-3,-3) &[1,0]+(-3,0) &[0,0]+(-3,0) &[0,0]+(-3,3)} \restore &2 \edge[dl]_{\delta} \edge[dr]^{\epsilon} &&2 \edge[dl]^{\delta} &&4  \\
2 &&2 
}$$
\noindent  Any module $N$ in the family represented by this graph has the same radical layering as $M$.  Indeed, the dotted pool communicates the fact that $\alpha z_1$, $\beta z_1$, $\delta \beta z_1$ are linearly dependent, while any two of these elements are linearly independent.  In other words, $\alpha z_1= x_1 \beta z_1 + x_2 \delta \beta z_1$ in $N$ for suitable $x_i \in K^*$, whence we find that $\alpha z_1$ and $\beta z_1$ only contribute one copy of $S_2$ to $JN/J^2N$.  In light of $\delta^2 \beta z_1 = 0$ (as communicated by the graph), we moreover glean $\delta \alpha z_1 = x_1 \delta \beta z_1$.  In particular, existence of a $\la$-module $N$ satisfying the equality $\alpha z_1= x_1 \beta z_1 + x_2 \delta \beta z_1$ implies that $\delta \alpha \neq c\, \delta \beta$ in $\la$ for any constant $c$ different from $ x_1$. 

In spite of the fact that $\delta \alpha z_1 \in K^* \delta \beta z_1$, we did not include an edge labeled $\delta$ between the two left-most vertices `2' of the graph.  We only insist on showing edges that carry irredundant information. 

Finally, we observe that $N \not\cong M$ for any choice of $M$ and $N$ in the two depicted families, since $\alpha M \subseteq J^2M$ while $\alpha N \nsubseteq J^2N$.
\bigskip

\centerline{TABLE OF CONTENTS}

\begin{enumerate}
\item[\ref{sec1}.] Introduction and conventions
\item[\ref{sec2}.]  Generic module properties and semicontinuous maps on $\modlad$
\item[] \ref{sec2}.A. Semicontinuous invariants 
\item[] \ref{sec2}.B. Detection and separation of irreducible components via upper semicontinuous maps 
\item[] \ref{sec2}.C.  A crucial generic invariant which fails to be semicontinuous on $\modlad$
\item[] \ref{sec2}.D.  A running example
\item[\ref{sec3}.] In a nutshell: Results to date
\item[] \ref{sec3}.A.  Hereditary algebras, $\la = KQ$
\item[] \ref{sec3}.B. Tame non-hereditary algebras
\item[] \ref{sec3}.C.  Wild non-hereditary algebras
\item[\ref{sec4}.] General facts about components and generic properties of their modules  
\item[] \ref{sec4}.A. Canonical decompositions of the irreducible components of $\modlad$
\item[] \ref{sec4}.B. Where to look for generic properties: Generic modules for the  components   
\item[\ref{sec5}.] More detail on the hereditary case
\item[\ref{sec6}.]  More detail on the tame non-hereditary  case
\item[\ref{sec7}.] Projective parametrizing varieties
\item[] \ref{sec7}.A. The ``small" projective parametrizing varieties $\grasstbd$ and $\grassSS$
\item[] \ref{sec7}.B.  The ``big" projective parametrizing varieties $\grassbd$ and $\biggrassSS$
\item[\ref{sec8}.]  The wild case:  Focus on truncated path algebras
\item[] \ref{sec8}.A. Realizability criterion and generic socles
\item[] \ref{sec8}.B.  The most complete generic picture:  $J^2 = 0$
\item[] \ref{sec8}.C.  Local algebras
\item[] \ref{sec8}.D. Algebras based on acyclic quivers
\item[] \ref{sec8}.E. The general truncated case
\item[\ref{sec9}.]  Beyond truncated path algebras
\item[] \ref{sec9}.A.  A finite set of irreducible subvarieties of $\modlad$ including all components
\item[] \ref{sec9}.B. Facts which carry over from truncated to general algebras
\item[] \ref{sec9}.C. Interplay between $\modlad$ and $\Rep_\bd(\latrunc)$
\item[] \ref{sec9}.D.  Illustration
\end{enumerate}

%%%%%%%%%%%%%%%%%%%%%%%%%%%%%
\section{Generic module properties and semicontinuous maps on $\modlad$}
\label{sec2}

By a \emph{generic} (\emph{module}) \emph{property}, not tied to any particular component, we will mean any property which, for arbitrary $\bd$ and any irreducible component $\C$ of $\modlad$,  is constant on some dense open subset of $\C$.  

There are two fundamentally different types of generic module properties, distinguished by whether or not they result from semicontinuous maps on $\modlad$.  Accordingly, we split the discussion of generic invariants into two cases.  The data associated with semicontinuous maps turn out to be particularly useful towards the detection of components, in a sense to be made precise in \ref{sec2}.B. 

\subsection*{\ref{sec2}.A. Semicontinuous invariants}

\begin{definition} \label{def2.1} Suppose $X$ is a topological space and $(\A, \le )$ a poset.  For $a \in \A$, we denote by $[a, \infty)$ the set $\{b \in \A \mid b \ge a\}$; the sets $(a, \infty)$, $(- \infty, a]$ and  $(- \infty , a)$ are defined analogously.

A map 
$f: X \longrightarrow \A$ is called \emph{upper semicontinuous} if, for every element $a \in \A$, the pre-image of $[a, \infty)$ under $f$ is closed in $X$.  
\end{definition}

We start with a few well known examples of upper semicontinuous maps on $X = \modlad$  with the Zariski topology.  Further examples will be encountered along the way.
Many module invariants taking numerical values are well known to yield upper semicontinuous maps.  For any fixed $N \in \lamod$, the maps $x \mapsto \dim \Hom_\la(M_x, N)$ and $x \mapsto \dim \Hom_\la(N, M_x)$, $x \mapsto \dim \Ext^1_\la(M_x, N)$ and $x \mapsto  \dim \Ext^1_\la(N,M_x)$ are examples, as is $x \mapsto \dim \End_\la(M_x)$; for $\Ext^1$ and $\End$, see \cite[Lemma 4.3]{CBS}.  Additional examples are the homological dimensions $\modlad \rightarrow \ZZ \cup \{\infty\}$, $x \mapsto \pdim M_x$ and $x \mapsto \idim M_x$ (see \cite[Theorem 12.61]{JeLe} or \cite[Lemma 2.1]{MHSa}).  Moreover, for any path $p$ in $KQ \setminus  I$, the map $x \mapsto \nullity_p M_x$ is upper semicontinuous, where $\nullity_p M_x$ is the nullity of the $K$-linear map $M_x \rightarrow M_x, \ m \mapsto p\, m$. 

Of course, numerical lower semicontinuous maps may be converted into upper semicontinuous ones by way of a factor $-1$.
The following functions are less standard.

\begin{definition} \label{def2.2} A \emph{semisimple sequence} is a sequence $\SS = (\SS_0,\SS_1,\dots,\SS_L)$ whose entries are semisimple $\la$-modules. (Recall that $L$ with $J^{L+1} = 0$ is fixed.) The dimension vector of such a sequence is $\udim \SS := \sum_{0\le l\le L} \udim \SS_l$. For any dimension vector $\bd$, we write $\Seq(\bd)$ for the set of $\bd$-dimensional semisimple sequences. This set is partially ordered by the \emph{dominance order}, defined as follows:
$$(\SS_0,\dots,\SS_L) \le (\SS'_0,\dots,\SS'_L) \ \ \ \iff \ \ \ \bigoplus_{0 \le j \le l} \SS_j \subseteq \bigoplus_{0 \le j \le l} \SS'_j \ \ \text{for}\  \ l \in \{0, 1, \dots, L\}.$$
Isomorphic semisimple modules will be identified; hence the above inclusion amounts to `$\le$' for the corresponding dimension vectors. 

The \emph{radical layering} and \emph{socle layering} of a $\la$-module $M$ are the semisimple sequences
\begin{align*}
\SS(M) &:= (M/JM, JM/J^2M, \dots, J^LM) \\
\SS^*(M) &:= (\soc_0(M), \soc_1(M)/\soc_0(M), \dots, M/\soc_L(M)),
\end{align*}
where $\soc_0(M) = \soc(M)$ and $\soc_{l+1}(M)/\soc_l(M) = \soc(M/\soc_l(M))$. For any semisimple sequence $\SS$ with $\udim \SS = \bd$, the following is a  locally closed subvariety of $\modlad$:
$$\laySS := \{ x \in \modlad \mid \SS(M_x) = \SS \}.$$
\end{definition}

\begin{proposition} \label{prop2.3} {\rm \cite[Observation 2.10]{irredcompI}} The maps $\modlad \rightarrow \Seq(\bd)$ defined by radical and socle layerings,  $x \mapsto \SS(M_x)$ and $x \mapsto \SS^*(M_x)$, are upper semicontinuous.
\end{proposition}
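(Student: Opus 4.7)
The strategy is to reduce upper semicontinuity into the poset $(\Seq(\bd),\le)$ to the standard upper semicontinuity of nullity of an algebraically varying family of $K$-linear maps. By Definition \ref{def2.1}, upper semicontinuity of $x \mapsto \SS(M_x)$ is equivalent to Zariski closedness, for every $\SS \in \Seq(\bd)$, of the preimage
$$V_\SS \,:=\, \{x \in \modlad \mid \SS(M_x) \ge \SS\}.$$
Since $\bigoplus_{j \le l} J^j M_x/J^{j+1} M_x$ has the same dimension vector as $M_x/J^{l+1} M_x$, the definition of the dominance order rewrites $V_\SS$ as the finite intersection
$$V_\SS \;=\; \bigcap_{0 \le l \le L}\ \bigcap_{1 \le i \le n} \bigl\{ x \in \modlad \mid \dim e_i(M_x/J^{l+1} M_x) \ge c_{l,i} \bigr\}, \qquad c_{l,i} := \sum_{0 \le j \le l} \dim e_i \SS_j.$$
Thus it suffices to show that each function $\varphi_{l,i} : x \mapsto \dim e_i(M_x/J^{l+1} M_x)$ is upper semicontinuous on $\modlad$.

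Writing $\dim e_i(M_x/J^{l+1} M_x) = d_i - \dim e_i J^{l+1} M_x$, upper semicontinuity of $\varphi_{l,i}$ reduces to lower semicontinuity of $x \mapsto \dim e_i J^{l+1} M_x$. Now $J^{l+1}$ is $K$-spanned by the paths of length $\ge l+1$ in $\la$, so $e_i J^{l+1} M_x$ is the image of the $K$-linear map
$$\Phi^{(l,i)}_x : \bigoplus_{p}\, e_{\strt(p)} M_x \longrightarrow e_i M_x, \qquad (m_p)_p \mapsto \sum_p p\cdot m_p,$$
where $p$ runs over all paths of length $l+1$ in $KQ$ with $\term(p) = i$. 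The matrix entries of $\Phi^{(l,i)}_x$ are polynomial in the coordinates $(x_\alpha)_{\alpha \in Q_1}$ of $x$, because a path $p = \alpha_k \cdots \alpha_1$ acts as the composite $x_{\alpha_k} \circ \cdots \circ x_{\alpha_1}$. Lower semicontinuity of matrix rank under polynomial parameters then yields the required statement for the radical layering.

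For the socle layering, the analogous cumulative identity $\udim \bigoplus_{j \le l} \SS^*(M_x)_j = \udim\,\soc_l(M_x)$ reduces the problem to upper semicontinuity of $x \mapsto \dim e_i \soc_l(M_x)$. Dually to the above, $e_i \soc_l(M_x) = \ker \Psi^{(l,i)}_x$, where
$$\Psi^{(l,i)}_x : e_i M_x \longrightarrow \bigoplus_{p}\, e_{\term(p)} M_x, \qquad m \mapsto (p \cdot m)_p,$$
with $p$ ranging over the paths of length $l+1$ starting at $i$; again the matrix of $\Psi^{(l,i)}_x$ depends polynomially on $x$, so its nullity is upper semicontinuous. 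The only substantive point in the proof is the initial translation of the dominance order into the cumulative dimension conditions on $M_x/J^{l+1} M_x$ and $\soc_l(M_x)$; everything else follows from the same semicontinuity mechanism already invoked in this section for $x \mapsto \nullity_p M_x$.
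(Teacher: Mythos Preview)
Your argument is correct. The paper does not actually supply a proof of Proposition~\ref{prop2.3}; it merely cites \cite[Observation 2.10]{irredcompI}. Your reduction of the dominance-order condition to the cumulative inequalities $\dim e_i(M_x/J^{l+1}M_x) \ge c_{l,i}$ (respectively $\dim e_i\soc_l(M_x) \ge c_{l,i}^*$), followed by the identification of these quantities with the corank (respectively nullity) of a matrix whose entries are polynomial in the coordinates of $x$, is exactly the standard route and matches the argument in the cited source. The one point worth making explicit is the equality $\soc_l(M_x)=\{m\in M_x\mid J^{l+1}m=0\}$, which you use implicitly when asserting $e_i\soc_l(M_x)=\ker\Psi^{(l,i)}_x$; this is an easy induction from the definition of the socle series, but stating it would remove any doubt.
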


\subsection*{\ref{sec2}.B. Detection and separation of irreducible components via upper semicontinuous maps}

The upcoming observation provides the link to the component problem. Note that the hypotheses are satisfied for all of the examples listed above.

\begin{observation} \label{obs2.4} {\rm\cite[Observation 2.7]{irredcompI}} Let $\A$ be a poset, $X$ a topological space, and $f: X \rightarrow  \A$ an upper semicontinuous map whose image is well partially ordered {\rm (meaning that $\Img(f)$ does not contain any infinite strictly descending chain and every nonempty subset has only finitely many minimal elements)}.

Then the pre-images $f^{-1}\bigl(( - \infty, a) \bigr)$ and $f^{-1}\bigl(( - \infty, a] \bigr)$ for $a \in \A$ are open in $X$.  In particular, given any irreducible subset $\U$ of $X$, the restriction of $f$ to $\U$ is generically constant, and the generic value of $f$ on $\,\U$ is \ 
$\min\{f(x) \mid x \in \U\}$.
\end{observation}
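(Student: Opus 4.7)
The plan is to reduce both openness claims to a single observation about preimages of upward-closed subsets of $\A$, and then deduce the generic-constancy assertion on irreducible sets by a standard finite-cover argument. A key preliminary I would establish first: in a well partially ordered set, \emph{every element lies above some minimal element}, because an iterative descent would otherwise produce an infinite strictly descending chain.

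Next, for a given $a \in \A$, I would verify that the complements $B = \{b \in \A : b \not\le a\}$ and $B' = \{b \in \A : b \not< a\}$ are both upward closed in $\A$. For $B$ this is immediate; for $B'$ a brief case analysis is needed (if $b \in B'$ and $b' \ge b$ satisfied $b' < a$, one would derive either $b < a$ or $b = a$ with $b' = a$, both contradictions). Restricting to $\Img(f)$, each intersection $B \cap \Img(f)$ and $B' \cap \Img(f)$ remains upward closed and, by hypothesis, has only finitely many minimal elements $b_1, \ldots, b_k$, each element of the intersection sitting above some $b_i$. Upward closedness then yields
$$f^{-1}(B) \;=\; \bigcup_{i=1}^{k} f^{-1}\bigl([b_i, \infty)\bigr),$$
a finite union of closed sets by upper semicontinuity, hence closed. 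Passing to complements shows $f^{-1}\bigl((-\infty,a]\bigr)$ is open, and the identical argument applied to $B'$ shows $f^{-1}\bigl((-\infty,a)\bigr)$ is open.

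For the generic-constancy assertion, let $\U \subseteq X$ be irreducible. The image $f(\U)$ inherits the well partial order property, so I may list its finitely many minimal elements $a_1, \ldots, a_m$; each value of $f|_\U$ lies above some $a_j$, giving
$$\U \;=\; \bigcup_{j=1}^{m} \bigl(\U \cap f^{-1}([a_j, \infty))\bigr),$$
a finite cover of $\U$ by closed subsets. Irreducibility forces $\U \subseteq f^{-1}([a_i, \infty))$ for some single $i$; hence $f(x) \ge a_i$ on all of $\U$. By the first part, the set $\U \cap f^{-1}\bigl((-\infty, a_i]\bigr)$ is open in $\U$, and nonempty because it contains any preimage of $a_i$ in $\U$. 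On this open subset, $f \equiv a_i$, and since $a_i$ is a lower bound for $f|_\U$ it coincides with $\min\{f(x) : x \in \U\}$.

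The one genuinely delicate point is the preliminary claim that every element of a subset of $\Img(f)$ lies above a minimal element of that subset: this uses \emph{both} halves of the well-partial-order hypothesis (absence of infinite strictly descending chains together with finiteness of minimal elements), and without it the crucial finite-union-of-closed-sets decomposition would fail.
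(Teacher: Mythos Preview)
Your argument is correct. The paper does not actually supply a proof of this observation; it merely records the statement and cites \cite[Observation 2.7]{irredcompI}. Your approach is the standard one: express the complement of $(-\infty,a]$ (resp.\ $(-\infty,a)$) as an upward-closed set, use the well-partial-order hypothesis on $\Img(f)$ to write it as a finite union of rays $[b_i,\infty)$, and invoke upper semicontinuity to conclude closedness of the preimage. The deduction of generic constancy on an irreducible $\U$ via the finite closed cover $\U = \bigcup_j \bigl(\U \cap f^{-1}([a_j,\infty))\bigr)$ is likewise the expected move.

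One small sharpening of your closing remark: the preliminary claim that every element of a nonempty subset of $\Img(f)$ dominates a minimal element of that subset requires only the descending-chain condition, not the finiteness of minimal elements. The latter enters separately, to guarantee that the resulting union of closed preimages is \emph{finite}. Both hypotheses are indeed needed for the decomposition, as you say, but they play distinct roles.
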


\begin{specialcases*} Let $\C$ and $\C'$ be irreducible components of $\modlad$ and $\Rep_{\bd'}(\la)$, respectively.

$\bullet$ The generic value of the map $\C \times \C' \rightarrow \Znn$, $(x, x') \mapsto \dim \Ext^1_\la(M_x, M_{x'})$, namely the minimum of the values attained, will be denoted by $\ext(\C, \C')$; 

$\bullet$ that of the map $\C \times \C' \rightarrow \Znn$, $(x, x') \mapsto \dim \Hom_\la(M_x, M_{x'})$ will be denoted by \, hom$(\C, \C')$;

$\bullet$ that of the map $\C \rightarrow \Znn$, $x \mapsto \dim \End_\la(M_x)$ will be denoted by $\term(\C)$.  
\smallskip

Note that $\term(\C)$ is decisive towards determining the generic number of parameters of $\C$, in that $\mu(\C) = \dim\C - \dim \GL(\bd) + \term(\C)$. (Indeed, $\dim \GL(\bd) - \dim \GL(\bd).x = \dim \End_\la(M_x)$ for $x\in\C$; see, e.g., \cite[p.~17]{CB}.) Moreover, it is clear that $\term(\C) = 1$ implies generic indecomposability of the modules in $\C$.  The converse fails in general; think of $\la = K[X]/ (X^2)$ and $\bd = d = 2$, for instance.
\end{specialcases*}  

\begin{definition} \label{def2.5} Let $f: X \rightarrow \A$ be as in Observation \ref{obs2.4}.  We say that $f$ \emph{detects irreducible components} provided that, for each irreducible component $\C$ of $X$, the generic value of $f$ on $\C$ is minimal in $\Img(f)$; equivalently, $\C \cap f^{-1}(a) \ne \varnothing$ for some minimal element $a \in \Img(f)$. We say that $f$ \emph{detects and separates irreducible components} if, additionally, $f^{-1}(a)$ is irreducible for every minimal element $a \in \Img(f)$. 
\end{definition}

\begin{example} \label{ex2.6} \cite[Example 2.11]{irredcompI}
Let $\la = KQ/ \langle \beta_1\alpha_2, \, \beta_2\alpha_1 \rangle$ and $\bd = (1,1,1)$, where $Q$ is the quiver 
$$\xymatrixrowsep{4pc}\xymatrixcolsep{4pc}
\xymatrix{
1 \ar@/_/[r]_{\alpha_2} \ar@/^/[r]^{\alpha_1} &2  \ar@/_/[r]_{\beta_2} \ar@/^/[r]^{\beta_1} &3
}$$
Then $\modlad$ has two irreducible components, whose modules generically have the forms
$$\xymatrixrowsep{1.25pc}\xymatrixcolsep{3pc}
\xymatrix{
1 \edge[d]_{\alpha_1} &&1 \edge[d]_{\alpha_2}   \\
2 \edge[d]_{\beta_1} &\txt{and} &2 \edge[d]_{\beta_2}  \\
3 &&3
}$$
The map $x \mapsto (\SS(M_x), \SS^*(M_x))$ detects, but does not separate them.  On the other hand, the pair of path nullities $x \mapsto \bigl(\nullity_{\beta_1 \alpha_1} M_x, \,\nullity_{\beta_2 \alpha_2} M_x \bigr)$ detects \emph{and} separates the components. 
\end{example}

\subsection*{\ref{sec2}.C.  A crucial generic invariant which fails to be semicontinuous on $\modlad$ }

In \cite{KacI, KacII}, Kac found the numerical invariants governing indecomposable decompositions of modules over path algebras to be generically constant, an observation carried over to general $\la$ by de la Pe\~na \cite{dlP}.  We decompose any $M \in \lamod$ in the form 
$$M = \bigoplus_{1 \le u \le s(M)} M_u \, ,$$
where each $M_u$ is indecomposable. 

\begin{proposition} \label{prop2.7} Let $\la$ and $\bd$ be arbitrary and $\C$ an irreducible component of $\modlad$. Then $s(M)$ and the family $\bigl( \udim M_u \bigr)_{u \le s(M)}$ are generically constant $($the latter up to order$)$ as $M$ traces $\C$.  
\end{proposition}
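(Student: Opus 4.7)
The plan is to partition $\modlad$ into finitely many constructible subsets indexed by Krull--Schmidt decomposition type, and then to invoke irreducibility of $\C$.

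For each unordered multiset $\Delta = \{\bd_1, \ldots, \bd_s\}$ of dimension vectors with $\sum_u \bd_u = \bd$, put
$$Z(\Delta) := \{ x \in \modlad \mid M_x \cong M_1 \oplus \cdots \oplus M_s \text{ with each } M_u \text{ indecomposable and } \udim M_u = \bd_u\}.$$
By the Krull--Schmidt theorem the sets $Z(\Delta)$ are pairwise disjoint, and since $\bd$ admits only finitely many such multiset decompositions, they partition $\modlad$ into finitely many pieces.

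The core of the argument is to show that each $Z(\Delta)$ is constructible. For this I would first verify that the subset $\I_u \subseteq \Rep_{\bd_u}(\la)$ parametrizing the indecomposable modules is constructible. A module $M_y$ is indecomposable precisely when $\End_\la(M_y)$ admits no idempotent distinct from $0$ and $\id$. The set of pairs $(y, f)$ in $\Rep_{\bd_u}(\la) \times \prod_i \End_K(K^{(d_u)_i})$ where $f$ commutes with the representation (a closed condition), satisfies $f^2=f$ (closed), and is different from $0$ and $\id$ (open), is locally closed, so its image under the first projection is constructible by Chevalley's theorem; then $\I_u$ is the complement of that image, again constructible. For an arbitrary ordering of the $\bd_u$, consider next the direct sum morphism
$$\psi: \GL(\bd) \times \prod_{u=1}^{s} \Rep_{\bd_u}(\la) \longrightarrow \modlad, \qquad (g, y_1, \ldots, y_s) \mapsto g \cdot (y_1 \oplus \cdots \oplus y_s),$$
defined using fixed block decompositions $K^{d_i} = \bigoplus_u K^{(d_u)_i}$. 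A second application of Chevalley yields that $Z(\Delta) = \psi\bigl( \GL(\bd) \times \prod_u \I_u \bigr)$ is constructible.

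With the partition in hand the conclusion is immediate. Because $\C$ is irreducible and is covered by the finitely many constructible sets $Z(\Delta) \cap \C$, exactly one of them---say $Z(\Delta^0) \cap \C$---is dense in $\C$; a constructible subset that is dense in an irreducible variety contains a dense open subset, so $\Delta^0$ records the generic value of both $s(M)$ and of the unordered family $\bigl(\udim M_u\bigr)_{u \le s(M)}$ as $M$ traces $\C$. The main obstacle is the verification of constructibility of the indecomposable locus $\I_u$; once that routine observation is secured, the rest is formal from Krull--Schmidt and from finiteness of the decompositions of $\bd$.
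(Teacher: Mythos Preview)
Your argument is correct. The paper does not actually prove Proposition~\ref{prop2.7}; it simply points to the literature (\cite[Section 2.8(a)]{KacI}, \cite[Proposition 3]{KacII}, \cite{dlP}, \cite[Theorem 1.1]{CBS}). The route you take---partition $\modlad$ into finitely many pieces according to Krull--Schmidt type, establish constructibility of each piece via Chevalley's theorem (first for the indecomposable locus, then for the image of the direct-sum morphism), and finally use irreducibility of $\C$ to single out a dense piece---is precisely the standard argument behind those references, in particular the one in \cite{dlP} and implicit in \cite{CBS}. Nothing is missing; the verification that the indecomposable locus is constructible is indeed routine, exactly as you indicate.
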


\begin{proof} See \cite[Section 2.8(a)]{KacI}, \cite[Proposition 3]{KacII}, \cite{dlP}, \cite[Theorem 1.1]{CBS}.
\end{proof}

\begin{termcomm*}  First suppose that $\la = KQ$.  In light of irreduciblity of $\Rep_\bd(KQ)$, any dimension vector $\bd$ can thus  be written in the form $\bd = \bd^{(1)} + \dots + \bd^{(s)}$ such that for all points $x$ in a suitable dense open subset of $\Rep_\bd(KQ)$, we have $M_x = \bigoplus_{1 \le u \le s} M_x^{(u)}$ with $M_x^{(u)}$ indecomposable and $\udim M_x^{(u)} = \bd^{(u)}$.  Kac dubbed this sum presentation of $\bd$ the \emph{canonical decomposition} of $\bd$; it is unique up to order of the summands.  When $\modlad$ fails to be irreducible, Proposition \ref{prop2.7} guarantees an analogous decomposition of $\bd$ for each irreducible component of $\C$.  We refer to it as the \emph{Kac decomposition} of $\bd$ \emph{relative to} $\C$, in order to reserve the attribute ``canonical" for a more informative decomposition of $\C$ established by Crawley-Boevey and Schr\"oer in \cite{CBS}; see Section \ref{sec4}.A for detail.   The Kac decompositions of $\bd$ relative to distinct components of $\modlad$ differ in general, as will shortly be illustrated.    

We follow with an example attesting to the fact that the dependence on $x \in \modlad$ of the number $s(x) = s(M_x)$ of indecomposable direct summands of $M_x$ fails to be semicontinuous.  In other words, this number does  not belong to the class of invariants discussed in \ref{sec2}.A.
\end{termcomm*}

\begin{example} \label{ex2.8} Let $\la = KQ$ be the Kronecker algebra, i.e., $Q= \xymatrixcolsep{2pc} \xymatrix{ 1 \ar@/^/[r]^{\alpha} \ar@/_/[r]_{\beta} &2 }$, and $\bd = (2,2)$. Then the generic value of $s(M)$ on $\modlad$ is $2$, corresponding to a generic decomposition of the form
$$\xymatrixrowsep{0.25pc} \xymatrixcolsep{1.5pc}
\xymatrix{
1 \edge@/_/[dd]_{\alpha} \edge@/^/[dd]^{\beta} &&1 \edge@/_/[dd]_{\alpha} \edge@/^/[dd]^{\beta}  \\
&\bigoplus  \\
2 &&2
}$$
On the other hand, all band or string modules with dimension vector $\bd$ are indecomposable, such as 
$$\xymatrixrowsep{1.5pc} \xymatrixcolsep{3pc}
\xymatrix{
1 \edge[d]_{\alpha} \edge[dr]^{\beta} &1 \edge[d]^{\alpha}  \\
2 &2
}$$
In particular, the generic value of $s(M)$ on $\modlad$ fails to be the minimal one, whence $x \mapsto s(M_x)$ is not upper semicontinuous on $\modlad$. On the other hand, the generic value of $s(M)$ on $\modlad$ is clearly smaller than the maximal value, namely $|\bd|$, whence lower semicontinuity is ruled out as well.
\end{example}

\subsection*{\ref{sec2}.D. A running example}

The following example illustrates the concepts of the section.  It will be revisited repeatedly.

\begin{example} \label{ex2.9}  Let $\la = \CC Q / I$, where $Q$ is the quiver below, and $I$ is generated by $\beta_i \alpha_j$  for $i \ne j$ together with $\alpha_1 \beta_2$ and all paths of length $4$.
$$\xymatrixrowsep{1.5pc}\xymatrixcolsep{6pc}
\xymatrix{
1 \ar@/^/[r]^{\alpha_1}  \ar@/^5ex/[r]^{\alpha_2} &2
\ar@/^/[l]^{\beta_1}  \ar@/^5ex/[l]^{\beta_2} \ar@/^9ex/[l]^{\beta_3}
}$$

If $\bd = (1,1)$, then $\modlad$ has $2$ irreducible components.  Generically, their modules have the following graphs, respectively.
$$\xymatrixrowsep{2pc} \xymatrixcolsep{2pc}
\xymatrix{
1 \edge@/_/[d]_{\alpha_1} \edge@/^/[d]^{\alpha_2} &&&2 \edge@/_1pc/[d]_{\beta_1} \edge[d]^{\beta_2} \edge@/^2pc/[d]^{\beta_3}  \\ 
2 &&&1
}$$

For $\bd = (2,1)$, there are $4$ irreducible components in $\modlad$, which we again communicate by way of generic graphs of their modules:
$$\xymatrixrowsep{1.5pc}\xymatrixcolsep{1pc}
\xymatrix{
1 \edge[d]_{\alpha_1} &&1 \edge[d]_{\alpha_2} &&1 \edge[dr]_{\alpha_1} &&1 \edge[dl]^{\alpha_2} &&&2 \edge@/_/[dl]_{\beta_1} \edge[d]_{\beta_2} \edge[dr]^{\beta_3}  \\
2 \edge[d]_{\beta_1} &&2 \edge[d]_{\beta_2} &&&2 &&&1 \horizpool2 &1 &1  \\
1 &&1
}$$
The generic radical layerings of the modules in these components are $(S_1, S_2, S_1, 0)$ for the first two components, $(S_1^2, S_2, 0, 0)$ for the third, and $(S_2, S_1^2, 0, 0)$ for the last.  The generic socle layerings may be read off the graphs equally readily in this case. 

If $\bd= (2,2)$, then $\modlad$ has $8$ irreducible components (see Section \ref{sec9}.D).  Generically, the modules encoded by one of them are decomposable, with decompositions of the form
$$\xymatrixrowsep{0.25pc} \xymatrixcolsep{1.5pc}
\xymatrix{
1 \edge@/_/[dd]_{\alpha_1} \edge@/^/[dd]^{\alpha_2} &&1 \edge@/_/[dd]_{\alpha_1} \edge@/^/[dd]^{\alpha_2}  \\
&\bigoplus  \\
2 &&2
}$$
Thus, the Kac decomposition of $\bd$ relative to this component is $\bd = (1,1)+(1,1)$. The modules in the other components of $\modlad$ are generically indecomposable, and so  the Kac decomposition of $\bd$ relative to those is $\bd = (2,2)$.
\end{example}

%%%%%%%%%%%%%%%%%%%%%%%%%%%%%
\section{In a nutshell: Results to date}
\label{sec3}

\subsection*{\ref{sec3}.A.  Hereditary algebras, $\la = KQ$ }

As we already pointed out, Problem (1) of the program in Section \ref{sec1} is void in this case. 
Regarding Problem (2):  The deepest results regarding generic properties of the modules in $\Rep_\bd(KQ)$ pertain to the canonical decomposition of $\bd$ (see Section \ref{sec2}.C for the definition and 5 for detail).
In \cite{KacII}, Kac described the dimension vectors $\bd$ leading to generically indecomposable $\bd$-dimensional representations in terms of their generic  endomorphism rings (providing a criterion checkable from $Q$ and $\bd$).  Concerning the situation of generically decomposable $\bd$-dimensional representations, he characterized the generic dimension vectors of the corresponding direct summands via vanishing of mutual $\Ext$-spaces.  However, his description fell slightly short of providing algorithmic access to canonical decompositions, the crux lying in the $\Ext$-conditions.   This gap was filled by Schofield ten years later in \cite{Scho}.  The algorithmic nature of Kac's result became apparent by dint of another cache of generic invariants of the $KQ$-modules with fixed dimension vector. Namely, the full collection of dimension vectors generically attained on  submodules of the modules in $\Rep_\bd(KQ)$ may be computed from $Q$.    

Clearly, the dimension vectors $\bd$ of $Q$ are subject to the following dichotomy:  Either the variety $\Rep_\bd(KQ)$ contains infinitely many $\GL(\bd)$-orbits of maximal dimension, or else it contains a dense orbit; the latter situation is clearly tantamount to vanishing of the number $\mu(\bd) = \mu(\Rep_\bd(KQ))$ of generic parameters (cf.~Section \ref{sec1}).  The problem of deciding between the alternatives for given $\bd$ was in turn resolved by Kac (see \cite[Proposition 4]{KacII}).  In fact, he determined the number $\mu(\bd)$ in terms of the canonical decomposition of $\bd$.

As will become clear in Section \ref{sec4}.B, in either case, there is a single $\bd$-dimensional representation $G$ of $Q$  --  a generic module for $\Rep_\bd(KQ)$, singled out by a strong uniqueness property  --  which displays all of the essential  generic properties of the representations in $\Rep_\bd(KQ)$.  A minimal projective presentation of such a telltale module $G$ is available from $Q$ without much computational effort (see \ref{thm4.6} and \ref{prop5.4} below).  In Section \ref{sec4}, this phenomenon will be explained in the context of a general algebra $\la = KQ/I$, and then picked up again in the ensuing discussions of special cases.

Detail will follow in Section \ref{sec5}.

\subsection*{\ref{sec3}.B. Tame non-hereditary algebras }

For several classes of tame algebras $\la$, the component problem has been completely resolved.  In all of these instances, the classification of the indecomposable objects in $\lamod$ had already been achieved beforehand; it served as a pivotal tool in pinning down the components of the parametrizing varieties $\modlad$.  

Already ahead of Kac's initiative, Donald-Flanigan \cite{DoFl} and Morrison \cite{Mor} had listed the irreducible components of the Gelfand-Ponomarev algebras with $J^2 = 0$.   In \cite{Schro}, Schr\"oer classified the irreducible components of the parametrizing varieties for arbitrary Gelfand-Ponomarev algebras, that is, for the algebras $K[x,y]/\langle x^r,y^s,xy\rangle$, $r,s\ge2$.  These algebras gained prominence through work of Gelfand and Ponomarev in \cite{GePo}, where the finite dimensional representation theory of this class of tame algebras was related to the Harish-Chandra representations of the Lorentz group.  Algebras giving rise to similar module structures, in turn amenable to the methods developed by Gelfand and Ponomarev, then surfaced in the representation theory of finite groups in characteristic $2$ (see, e.g., \cite{Erd}), leading to an encompassing class of algebras, dubbed \emph{special biserial};  the name is due to the structure of the corresponding indecomposable left/right projective modules: namely the radicals of these modules are sums of two uniserials whose intersection is either zero or simple.

The component problem remains open for arbitrary special biserial algebras, but has been resolved for another subclass by Carroll and Weyman in \cite{CW}, namely for acyclic gentle string algebras.  

Moreover, a novel approach was taken by Geiss and Schr\"oer \cite{GeiSchI, GeiSchII} (as well as by Marsh and Reineke [unpublished]) towards understanding the irreducible components of the module varieties of preprojective algebras $P(Q)$, where $Q$ is a quiver of Dynkin type.  The algebras $P(Q)$ of tame, but infinite, representation type were tackled via a detour through tubular algebras.

More detail can be found in Section \ref{sec6}. 

Bounds on the number of components for certain tame algebras $\la$ may be obtained from an interesting stratification of the varieties $\modlad$ due to Richmond \cite{Rich}. Barot and Schr\"oer further explored these stratifications over canonical algebras in  \cite{BaSch}.

\subsection*{\ref{sec3}.C.  Wild non-hereditary algebras }

A solution to the problem of classifying the components of $\modlad$ by means of (computationally accessible) representation-theoretic invariants of their modules has recently been completed for \emph{truncated path algebras}, i.e., for algebras $\la$ of the form
$$KQ / \langle \text{all paths of length}\ L+1 \rangle,$$
where $L$ is a positive integer. The full picture was compiled in a sequence of installments, with contributions by Babson, Thomas, Bleher, Chinburg, Shipman, and the authors (\cite{BHT}, \cite{BCH}, \cite{irredcompI}, \cite{irredcompII}, \cite{GHZ}).

 Observe that all algebras with $J^2 = 0$ are truncated path algebras, as are all hereditary algebras.  Moreover, given any basic finite dimensional algebra $\Delta = KQ/I$, there clearly exists a unique (up to isomorphism) truncated path algebra $\Delta_{\text{trunc}}$ sharing quiver and Loewy length with $\Delta$, such that $\Delta$ is a factor algebra of $\Delta_{\text{trunc}}$.   For any dimension vector $\bd$ of $Q$, one thus retrieves $\Rep_\bd(\Delta)$ as a closed subvariety of $\Rep_{\bd} (\Delta_{\text{trunc}})$.  As we will see in Section \ref{sec9}, this embedding  provides some mileage towards the exploration of the irreducible components of $\Rep_\bd(\Delta)$ for general $\Delta$.

The pivotal asset of a truncated path algebra $\la$ lies in the fact that, among the subvarieties $\laySS$ of $\modlad$ (see Definition \ref{def2.2}), the nonempty ones are always irreducible (this follows from \cite[Theorem 5.3]{BHT}).
Since we identify isomorphic semisimple modules, we thus obtain a finite partition
$$\modlad = \bigsqcup_{\SS \in \Seq(\bd)} \laySS$$
into irreducible locally closed subvarieties.  Crucial in the present context: Since radical layerings are generically constant,
this guarantees that the irreducible components of $\modlad$ are among the closures $\overline{\laySS}$ of the subvarieties $\laySS$ of $\modlad$.  In other words, the component problem has been converted into a (significantly easier) sorting problem calling for a partition of $\Seq(\bd)$ into two camps:  The semisimple sequences $\SS$ for which $\overline{\laySS}$ is \emph{maximal} irreducible on one hand, and those $\SS$ for which $\overline{\laySS}$ is \emph{embedded} in a strictly larger $\overline{\laySS'}$ on the other. We point out that, outside the case $J^2 = 0$, the varieties $\laySS$ do not constitute a stratification of $\modlad$ in the strict sense.  Indeed, the situations where the closures $\overline{\laySS}$ are unions of $\Rep \SS'\,$s are comparatively rare; in general, the set of overlaps of the closures  of the $\laySS$ is intricate.  Feeding into the algorithmic side of the problem:  In the truncated case, it is particularly straightforward to recognize the \emph{realizable} semisimple sequences $\SS$, that is, those for which $\laySS \ne \varnothing$.   

Section \ref{sec8}, devoted to truncated path algebras, is divided into several subsections which reflect increasing degrees of effort required to arrive at a full list of the components of $\modlad$ from $Q$, $I$ and $\bd$, and to algorithmically access a large spectrum of generic properties of their modules.  The most exhaustively understood case is $J^2 = 0$.  Under this hypothesis, the solutions to Problems (1) and (2) of Section \ref{sec1} are, in fact, slightly more complete than in the hereditary case.  This is due to a geometric bridge linking ``projective incarnations" of certain comparatively small subvarieties of the $\modlad$ to analogous projective parametrizing varieties over a stably equivalent hereditary algebra.  The transfer of information will be elaborated in Section \ref{sec8}.B.

If $\la$ is \emph{local} truncated, those radical layerings $\SS = (\SS_0, \dots, \SS_L)$ with dimension vector $\bd$ which are generic  for irreducible components of $\modlad$ can be sifted out of the full set $\Seq(\bd)$ by mere inspection of the dimension vectors of the semisimple entries $\SS_l$ (Theorem \ref{thm8.8} in Section \ref{sec8}.C).  The next-simplest subcase is that of an acyclic underlying quiver $Q$.  Just as in the local case, the pivotal upper semicontinuous  map on $\modlad$, namely
$$\Theta: \modlad \longrightarrow \Seq(\bd) \times \Seq(\bd), \ \ \ x \longmapsto (\SS(M_x), \SS^*(M_x))$$
(see Section \ref{sec2}.A), detects and separates the irreducible components of $\modlad$ in this scenario (Theorem \ref{thm8.10}).
But, in contrast to the local case, we do not know of a simplified criterion that would permit dodging size comparisons among the pairs in $\Img(\Theta)$, in the process of listing the irreducible components from $Q$, $L$, and $\bd$.  On the other hand, the task is facilitated by the following facts:  Due to upper semicontinuity of $x \mapsto \SS^*(M_x)$, the minimal pairs in $\Img(\Theta)$ are of the form $(\SS, \SS^*)$ where $\SS^*$ is the generic socle layering of the modules in $\laySS$.  This generic socle layering may be recursively obtained from $\SS$, according to the formula of Theorem \ref{thm8.3} (which holds for arbitrary truncated path algebras).

For general truncated $\la$, the map $\Theta$ is known to have blind spots relative to the irreducible components of the module varieties.  It is a novel upper semicontinuous module invariant $\Gamma: \modlad \rightarrow \NN$ that compensates for this deficiency.  The generic values of  this map $\Gamma$ are in turn computationally accessible from $Q$ and the Loewy length of $\la$, but the algorithm yielding a full list of components via the $\Gamma$-test is significantly more labor-intensive than the methods we proposed for the preceding special cases.  (See \ref{sec8}.D.) 
%\smallskip

Techniques to understand the components of the module varieties over more general algebras are still lacunary. Section \ref{sec9} contains a discussion of ways in which some of the techniques developed for truncated path algebras may be adapted to aid in identifying irreducible components in the general case.

%%%%%%%%%%%%%%%%%%%%%%%%%%%%%
\section{General facts about components and generic properties of their modules}
\label{sec4}

\subsection*{\ref{sec4}.A. Canonical decompositions of the irreducible components of $\modlad$ }

The results of this subsection are due to Crawley-Boevey and Schr\"oer \cite{CBS}, as is the convenient notation which will be used to convey them.
Suppose $\bd = \sum_{1 \le r \le s} \bd^{(r)}$. Given irreducible $\GL(\bd^{(r)})$-stable subvarieties $\C_r$ of $\Rep_{\bd^{(r)}} (\la)$, respectively, we denote by $\C_1 \oplus \cdots \oplus \C_s$ the $\GL(\bd)$-stable hull of the image of $\C_1 \times \cdots \times \C_s$ under the obvious map $\prod_{1 \le r \le s} \Rep_{\bd^{(r)}} (\la) \rightarrow \modlad$.  This irreducible variety is called the \emph{direct sum of}  $\C_1, \dots, \C_s$; it consists of those points $x \in \modlad$ for which $M_x \cong \bigoplus_{1 \le r \le s} M_x^{(r)}$ with $M_x^{(r)}$ in $\C_r$.  Even when the $\C_r$ are closed in the $\Rep_{\bd^{(r)}} (\la)$, the Zariski-closure of the direct sum is typically substantially larger than $\C_1 \oplus \cdots \oplus \C_s$ (see Example \ref{ex2.8}).  

The first result amounts to a Krull-Remak-Schmidt theorem for irreducible components.  
An irreducible component of some $\Rep_{\bd'}(\la)$ is called \emph{indecomposable} in case, generically, its modules are indecomposable.

\begin{theorem} \label{thm4.1} {\rm\cite[Theorem 1.1]{CBS}} Let $\C$ be an irreducible component of $\modlad$. Then there is a sum decomposition $\bd = \sum_{1\le r\le s} \bd^{(r)}$, together with indecomposable irreducible components $\C_r$ of $\Rep_{\bd^{(r)}}(\la)$, respectively, such that  $\C = \overline{\C_1\oplus \cdots \oplus \C_s}$. 
The $\bd^{(r)}$ and $\C_r$ with these properties are unique up to order.

The equation $\C = \overline{\C_1\oplus \cdots \oplus \C_s}$ is referred to as the {\rm canonical decomposition of $\C$}.
\end{theorem}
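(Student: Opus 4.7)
The plan is to establish existence and uniqueness separately, both driven by Proposition \ref{prop2.7} (which supplies the decomposition of $\bd$) and by Krull--Remak--Schmidt at the module level.

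For existence, I would apply Proposition \ref{prop2.7} to $\C$ to obtain a dense open $\U \subseteq \C$, an integer $s$, and (up to ordering) a decomposition $\bd = \bd^{(1)} + \cdots + \bd^{(s)}$ such that for every $x \in \U$, $M_x \cong M_x^{(1)} \oplus \cdots \oplus M_x^{(s)}$ with $M_x^{(r)}$ indecomposable of dimension vector $\bd^{(r)}$. For any tuple $(\C_1, \dots, \C_s)$ of irreducible components $\C_r \subseteq \Rep_{\bd^{(r)}}(\la)$, set $V(\C_1, \dots, \C_s) := \overline{\C_1 \oplus \cdots \oplus \C_s}$, an irreducible closed subvariety of $\modlad$. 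Since each $\Rep_{\bd^{(r)}}(\la)$ has only finitely many irreducible components, only finitely many such tuples exist; and since every $x \in \U$ has each summand $M_x^{(r)}$ sitting in some component of $\Rep_{\bd^{(r)}}(\la)$, these finitely many $V(\C_1, \dots, \C_s)$ cover $\U$. Irreducibility of $\C$ then forces $\C \subseteq V(\C_1, \dots, \C_s)$ for a single well-chosen tuple. Conversely, $V(\C_1, \dots, \C_s)$ is irreducible and closed in $\modlad$, so it sits inside some irreducible component; that component must equal $\C$, since no component properly contains another. Thus $\C = V(\C_1, \dots, \C_s)$. The components $\C_r$ in the chosen tuple come out automatically indecomposable: if Proposition \ref{prop2.7} applied to each $\C_r$ gives a generic summand count $s_r \ge 1$, then generic modules of $V(\C_1, \dots, \C_s)$ have $\sum_r s_r$ indecomposable summands, a total which must equal $s$ and hence forces every $s_r = 1$.

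For uniqueness, suppose $\C = V(\C_1, \dots, \C_s) = V(\C_1', \dots, \C_{s'}')$ with both tuples indecomposable. The canonical morphisms $\GL(\bd) \times \prod_r \C_r \to \C$ and $\GL(\bd) \times \prod_{r'} \C_{r'}' \to \C$ are dominant, so a sufficiently general $x \in \C$ lifts under both, producing two indecomposable direct-sum decompositions $M_x \cong \bigoplus_r M_{y_r} \cong \bigoplus_{r'} M_{y_{r'}'}$. Krull--Remak--Schmidt forces $s = s'$ and, after a permutation of indices, $M_{y_r} \cong M_{y_r'}$; in particular the two families of dimension vectors coincide. Then $y_r' \in \GL(\bd^{(r)}) \cdot y_r \subseteq \C_r$ by $\GL(\bd^{(r)})$-stability of $\C_r$. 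Varying $x$ over a dense open in $\C$, the point $y_r'$ sweeps a dense subset of $\C_r'$ (by dominance of the projection), with each such $y_r'$ lying in $\C_r$; hence $\C_r' \subseteq \C_r$, and symmetry yields $\C_r' = \C_r$.

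I expect the main obstacle to be the existence step: specifically, promoting the finite cover $\U \subseteq \bigcup V(\C_1, \dots, \C_s)$ to the single equality $\C = V(\C_1, \dots, \C_s)$ for a well-chosen tuple. This rests on the interchange of a finite union with closure inside an irreducible $\C$ and uses noetherianity of each $\Rep_{\bd^{(r)}}(\la)$. Once existence is in place, the automatic indecomposability of the $\C_r$ drops out of the additivity of generic summand counts, and uniqueness follows cleanly from Krull--Remak--Schmidt combined with $\GL$-stability of irreducible components.
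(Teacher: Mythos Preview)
The paper does not supply its own proof of this statement; Theorem \ref{thm4.1} is quoted directly from \cite[Theorem 1.1]{CBS}. So there is no in-paper argument to compare against, and your proposal stands or falls on its own.

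Your existence argument is correct and is essentially the standard one. Proposition \ref{prop2.7} furnishes the generic dimension-vector decomposition; finiteness of the components of each $\Rep_{\bd^{(r)}}(\la)$ produces a finite closed cover of a dense open in $\C$; irreducibility of $\C$ selects a single tuple; and maximality among irreducible closed subsets forces equality $\C = \overline{\C_1\oplus\cdots\oplus\C_s}$. The additivity of generic summand counts (via Proposition \ref{prop2.7} applied to each $\C_r$) then pins each $s_r$ to $1$.

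Your uniqueness argument has a gap. For a fixed general $x$, Krull--Remak--Schmidt yields a permutation $\sigma_x$ with $M_{y'_r}\cong M_{y_{\sigma_x(r)}}$, hence $y'_r\in\C_{\sigma_x(r)}$; but $\sigma_x$ depends on $x$, so you cannot conclude $y'_r\in\C_r$ for a single relabeling valid across all $x$. Your density claim (which is justifiable: the preimage under the dominant map $\GL(\bd)\times\prod_r\C'_r\to\C$ of a dense open is dense, hence projects densely onto each $\C'_r$) then only delivers $\C'_r\subseteq\bigcup_\sigma\C_{\sigma(r)}$, whence $\C'_r=\C_j$ for \emph{some} $j$ with $\bd^{(j)}=\bd'^{(r)}$. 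This does not yet show that the two \emph{multisets} $\{\C_1,\dots,\C_s\}$ and $\{\C'_1,\dots,\C'_s\}$ agree.

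The clean repair: shrink further to the dense open locus in $\prod_r\C_r$ (and likewise in $\prod_r\C'_r$) on which each $y_r$ lies in $\C_r$ and in \emph{no other} irreducible component of $\Rep_{\bd^{(r)}}(\la)$; this locus is dense because distinct components meet in proper closed subsets. For $x$ in the intersection of the two (dense constructible) images, each indecomposable summand of $M_x$ sits in a unique component of the relevant $\Rep_{\bd^{(r)}}(\la)$, so the multiset of these components is an intrinsic invariant of $x$. It equals $\{\C_1,\dots,\C_s\}$ from one presentation and $\{\C'_1,\dots,\C'_s\}$ from the other, giving the desired uniqueness.
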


On the other hand, closures of direct sums of irreducible components need not be maximal irreducible in the ambient module variety.  Take $s = 2$, for instance, and let $\bd^{(1)} = \mathbf{e}_1$, $\bd^{(2)} =  \mathbf{e}_2$ be unit vectors.  Clearly, $\overline{\Rep_{\bd^{(1)}}(\la) \oplus \Rep_{\bd^{(2)}}(\la)}$ is maximal irreducible in $\modlad$ if and only if $\Ext_\la^1(S_1, S_2) = \Ext_\la^1(S_2, S_1) = 0$.  The following theorem furnishes the general pattern behind this trivial example.

\begin{theorem} \label{thm4.2} {\rm\cite[Theorem 1.2]{CBS}} Suppose that $\bd = \sum_{1\le r\le s} \bd^{(r)}$ and that $\C_r$ is an irreducible component of $\Rep_{\bd^{(r)}} (\la)$ for $1\le r\le s$. Then $\overline{\C_1 \oplus \cdots \oplus \C_s}$ is an irreducible component of $\modlad$ if and only if $\ext(\C_i,\C_j) = 0$ for all $i\ne j$.
\end{theorem}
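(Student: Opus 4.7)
The plan is to compute $\dim \mathcal V$ exactly for $\mathcal V := \overline{\C_1 \oplus \cdots \oplus \C_s}$ and then compare it with the dimension of any irreducible component of $\modlad$ containing $\mathcal V$ via Voigt's Lemma. Direction $(\Leftarrow)$ will use the tangent-space identification $T_M \modlad / T_M \orb(M) \cong \Ext^1_\la(M, M)$; direction $(\Rightarrow)$ will be handled contrapositively by building a strictly larger irreducible subvariety of $\modlad$ from nonsplit extensions. To obtain the dimension formula, consider the morphism
$$\mu: \GL(\bd) \times \C_1 \times \cdots \times \C_s \longrightarrow \modlad, \quad (g, (x_r)) \longmapsto g \cdot (x_1 \oplus \cdots \oplus x_s),$$
whose image has closure $\mathcal V$. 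For a generic $M = \bigoplus_r M_r \in \mathcal V$ (with $M_r$ generic in $\C_r$), Theorem \ref{thm4.1} implies that the direct-sum decompositions of $M$ with summand types $(\C_r)$ form a single $\Aut_\la(M)$-orbit of dimension $\sum_{i \ne j} \dim \Hom_\la(M_i, M_j)$; the standard fiber analysis of $\mu$ then yields
$$\dim \mathcal V = \dim \orb_{\GL(\bd)}(M) + \sum_r \bigl(\dim \C_r - \dim \orb_{\GL(\bd^{(r)})}(M_r)\bigr).$$

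For direction $(\Leftarrow)$, assume $\ext(\C_i, \C_j) = 0$ for all $i \ne j$, let $\C$ be any irreducible component of $\modlad$ containing $\mathcal V$, and fix $M = \bigoplus_r M_r$ generic in $\mathcal V$. Since generic points of $\C_r$ are smooth points of $\Rep_{\bd^{(r)}}(\la)$, Voigt's Lemma holds as an equality at each $M_r$: $\dim \C_r - \dim \orb(M_r) = \dim \Ext^1_\la(M_r, M_r)$. The ext-vanishing hypothesis kills the off-diagonal terms of $\Ext^1_\la(M, M) = \bigoplus_{i,j} \Ext^1_\la(M_i, M_j)$, so $\dim \Ext^1_\la(M, M) = \sum_r \dim \Ext^1_\la(M_r, M_r)$. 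Substituting into the dimension formula,
$$\dim \mathcal V = \dim \orb(M) + \dim \Ext^1_\la(M, M).$$
Voigt's inequality gives $\dim \C \le \dim \orb(M) + \dim \Ext^1_\la(M, M) = \dim \mathcal V$, which combined with $\mathcal V \subseteq \C$ and the irreducibility and closedness of $\mathcal V$ forces $\mathcal V = \C$.

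For direction $(\Rightarrow)$, argue by contrapositive. Suppose $\ext(\C_1, \C_2) > 0$ after relabeling. Consider the irreducible variety $\E := \{ (x_1, x_2, \xi) : x_r \in \C_r,\ \xi \in \Ext^1_\la(M_{x_1}, M_{x_2}) \}$, generically a rank-$\ext(\C_1, \C_2)$ vector bundle over an open subset of $\C_1 \times \C_2$. Mapping each triple to the middle term of the corresponding extension, then taking the $\GL(\bd^{(1)} + \bd^{(2)})$-invariant closure in $\Rep_{\bd^{(1)} + \bd^{(2)}}(\la)$, produces an irreducible closed subvariety $\mathcal W$. Since $\xi = 0$ recovers split sums $M_1 \oplus M_2$, we have $\overline{\C_1 \oplus \C_2} \subseteq \mathcal W$, and the extra Ext parameters force $\dim \mathcal W > \dim \overline{\C_1 \oplus \C_2}$, making the inclusion strict. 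Forming the irreducible closed subvariety $\mathcal V' := \overline{\mathcal W \oplus \C_3 \oplus \cdots \oplus \C_s}$ of $\modlad$ then strictly contains $\mathcal V$, so $\mathcal V$ is not a component.

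The principal technical hurdle lies in direction $(\Leftarrow)$: justifying the equality case of Voigt's Lemma for each $\C_r$ at its generic point. Generically in $\C_r$, the module $M_r$ is a smooth point of $\C_r$, and further a smooth point of $\Rep_{\bd^{(r)}}(\la)$ provided the ambient scheme is reduced there and $\C_r$ is the unique component through $M_r$; only then does the tangent inclusion $T_{M_r} \C_r / T_{M_r} \orb(M_r) \hookrightarrow \Ext^1_\la(M_r, M_r)$ become a dimension equality. A further compatibility check is needed to verify that the direct-summand decomposition of $M$ interacts correctly with the Voigt identifications, so that the individual $\C_r$-wise equalities aggregate into the required global identity $\dim \mathcal V = \dim \orb(M) + \dim \Ext^1_\la(M, M)$.
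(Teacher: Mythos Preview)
The paper does not supply its own proof of this theorem; it simply quotes the result from \cite{CBS}. So there is no in-paper argument to compare against, and the relevant benchmark is the original Crawley-Boevey--Schr\"oer proof.

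Your direction $(\Leftarrow)$ contains a genuine gap, which you yourself flag in the last paragraph without resolving. The step ``generic points of $\C_r$ are smooth points of $\Rep_{\bd^{(r)}}(\la)$, so Voigt's Lemma holds as an equality'' is false in general: Voigt identifies $\Ext^1_\la(M_r,M_r)$ with the \emph{scheme-theoretic} tangent space modulo the orbit, and the scheme $\Rep_{\bd^{(r)}}(\la)$ need not be reduced even at a generic point of a component. For $\la = K[x]/(x^2)$ and $\bd^{(r)}=1$, the variety $\Rep_1(\la)$ is a single point (the simple $S$) but the scheme is $\spec K[t]/(t^2)$; here $\dim\C_r - \dim\orb(S)=0$ while $\dim\Ext^1_\la(S,S)=1$. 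So your key equality $\dim\C_r-\dim\orb(M_r)=\dim\Ext^1_\la(M_r,M_r)$ fails, and with it the conclusion $\dim\mathcal V=\dim\orb(M)+\dim\Ext^1_\la(M,M)$. This is not a patchable technicality; it is exactly why the proof in \cite{CBS} does \emph{not} go through tangent spaces but instead argues with orbit dimensions, upper semicontinuity of $\dim\Hom$, and a cancellation-type analysis of degenerations of direct sums.

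Your direction $(\Rightarrow)$ has the right idea, but the assertion $\dim\mathcal W>\dim\overline{\C_1\oplus\C_2}$ is not justified: distinct classes in $\Ext^1$ can produce isomorphic middle terms, so the ``extra $\Ext$ parameters'' need not survive the $\GL$-saturation as extra dimensions. A cleaner argument avoids $\mathcal W$ altogether: choose a nonsplit extension $E$ of a generic $M_1$ by a generic $M_2$ and set $N=E\oplus M_3\oplus\cdots\oplus M_s$. Then $M=\bigoplus_r M_r$ lies in $\overline{\orb(N)}\setminus\orb(N)$, hence $\dim\orb(N)>\dim\orb(M)$. If $\mathcal V$ were a component, then $M$, being generic in $\mathcal V$, lies in no other component, so $\overline{\orb(N)}\subseteq\mathcal V$; but this contradicts the maximality of the generic orbit dimension in $\mathcal V$.
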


This pair of results extends Kac's findings in the hereditary case (see Theorem \ref{thm5.1} below) as far as is possible in full generality, painting a clear picture of the interactions among the irreducible components of the parametrizing varieties. In favorable situations, these results should permit us to hierarchically organize these components  --  assuming they are all known  -- in terms of ``$\,\C \preceq \D \iff \D = \overline{\C \oplus \E}$ for some $\E\,$".  To concretely establish such a hierarchy for a given algebra $\la$, one would also need an algorithmic test for the vanishing of $\ext(\C_i,\C_j)$, on the model of the hereditary scenario.  An ``algebra-specific" understanding of the indecomposable pieces of the component puzzle, as well as of the modalities of gluing them together to larger components, is thus required for the purpose.

\subsection*{\ref{sec4}.B. Where to look for generic properties: Generic modules for the  components}

Many of the results from the literature referred to in this subsection are couched and proved by way of projective parametrizing varieties. For translations into the affine scenario, we refer to Section \ref{sec7}.

Suppose $\la = KQ/I$, without any restrictions on the admissible ideal $I$. Roughly, the purpose of this section is to outline the following: For each irreducible component $\C$ of any $\modlad$, there exists a $\la$-module $G$ in $\C$ such that $G$ has all essential generic properties of the modules in $\C$.  Next to securing existence, one ascertains that such a ``generic module" $G$ for $\C$ is unique, up to a special type of Morita self-equivalence of $\lamod$.  As for concrete realizations:  A minimal projective presentation of $G$ may be computed from $Q$ and a set of generators for $I$ by means of a fairly simple algorithm; the computational side will not be elaborated here.  However, for the algebras we will discuss in detail, for truncated path algebras in particular, explicit presentations of the generic modules may be simply read off the quiver (see Theorem \ref{thm4.6}).  More detail can be found in \cite[Section 4]{BHT} and \cite{hier}.

\begin{step1*} A first indication of the significance of skeleta to the component problem can be found in \ref{defobs4.4}. Intuitively, skeleta are $K$-bases of modules, made up of ``paths", which are closed under ``initial subpaths" and thus may be graphically represented by forests.  

Let $\la_0 = \la_{\text{trunc}}$ be the truncated path algebra associated with $\la$ in the sense of Section \ref{sec3}.C.  Again $L+1$ is an upper bound for the Loewy length of $\la$ and hence for that of $\la_0$. Given a semisimple $T$ in $\lamod$, let $P_0 = \bigoplus_{1\le r\le t} \la_0 \bz_r$ be a $\la_0$-projective cover of $T$ with a full sequence $(\bz_r)_{r\le t}$ of top elements; note that the semisimple objects in $\lamod$ coincide with those in $\la_0$-mod. Given that path lengths in $\la_0$ are well defined, the same is true for the lengths of the following \emph{paths in $P_0$}: these are the \emph{nonzero}  elements of the form $p\, \bz_r$, where $p$ is a path of length $\le L$ in $\la_0$. Clearly, the set of all paths in $P_0$ is a basis for $P_0$, which respects the radical layering, in the sense that the paths of length $l$ induce a $K$-basis for $J^lP_0/J^{l+1}P_0$. Such ``layer-faithful" bases are available for arbitrary $\la$-modules with top $T$, as follows.
\end{step1*}

\begin{definition} \label{def4.3} Let $\SS = (\SS_0,\dots,\SS_L)$ be a semisimple sequence of $\la$-modules with $\SS_0 = T$, and suppose that $\udim \SS = \bd$. An \emph{$($abstract\/$)$ skeleton in $P_0$ with layering $\SS$} (and dimension vector $\bd$) is any set $\S$ of paths in $P_0$ with the following properties: 
\begin{itemize}
\item  For each $l \in \{0,\dots,L\}$ and each $i \in \{1,\dots,n\}$, the number of those paths of length $l$ in $\S$ which end in the vertex $e_i$ is $\dim e_i\SS_l$; 
\item $\S$ is closed under initial subpaths, i.e., $\bigl( p\, \bz_r \in \S \  \text{and} \  p = p_2p_1  \implies p_1\, \bz_r \in \S \bigr)$. 
\end{itemize}

Moreover, given a $\la$-module $M$, we call an abstract skeleton $\S$ with layering $\SS(M)$ a \emph{skeleton of $M$} in case there exists a full sequence of top elements $z_1,\dots,z_t$ of $M$ such that the set
$$\{ p\, z_r \mid p\, \bz_r \in \S \}$$
\noindent is a $K$-basis for $M$. Note that the collection of those $p\, z_r$ for which $\len(p) = l$ then induces a $K$-basis for $J^lM/J^{l+1}M$.
\end{definition}

Due to the second condition imposed on skeleta, the $\bd$-dimensional skeleta in $P_0$  are in $1$-$1$ correspondence with forests (i.e., finite unions of tree graphs) with $|\bd|$ vertices, each vertex tagged by a primitive idempotent, such that precisely $d_i$ vertices are labeled by $e_i$ for each $i$.  We refer to Example \ref{ex2.9} to illustrate the concept.
Any module with a graph as shown on the left below has three distinct skeleta, each of them a single tree.   
$$\xymatrixrowsep{1.75pc} \xymatrixcolsep{0.75pc}
\xymatrix{
&2 \edge@/_0.6pc/[dl]_{\beta_1} \edge[d]_{\beta_2} \edge[dr]^{\beta_3} & &&&&\dropdown{\txt{skeleta:}} && &2 \edge[dl]_{\beta_1} \edge[dr]^{\beta_2} & &&& &2 \edge[dl]_{\beta_1} \edge[dr]^{\beta_3} & &&& &2 \edge[dl]_{\beta_2} \edge[dr]^{\beta_3}  \\
1 \horizpool2 &1 &1 &&&&&& 1 &&1 &&&1 &&1 &&&1 &&1
}$$
Still in the context of Example \ref{ex2.9}, let $T$ be the $\la$-module $S_1^2 \oplus S_2$, and $P_0 = \bigoplus_{1 \le r \le 3} \la \bz_r$ the distinguished $\la_0$-projective cover of $T$, where $\bz_1, \bz_2$ are normed by $e_1$ and $\bz_3$ by $e_2$.  Then the $\la$-projective cover $P$ of $T$ has a skeleton consisting of three trees, two of which are equal to the tree depicted under $\bz_1$ below, the third as depicted under $\bz_3$.  
$$\xymatrixrowsep{1.6pc} \xymatrixcolsep{1.25pc}
\xymatrix{
&1 \dropup{\bz_1} \edge[dl]_{\alpha_1} \edge[dr]^{\alpha_2} && &&& &&2 \dropup{\bz_3} \edge@/_/[dl]_{\beta_1} \edge[d]^(0.6){\beta_2} \edge@/^/[dr]^{\beta_3}  \\
2 \edge[d]_{\beta_1} &&2 \edge[d]^{\beta_2} & &&& &1 \edge[dl]_{\alpha_1} \edge[d]^{\alpha_2} &1 \edge[d]^(0.66){\alpha_2} &1 \edge[d]_(0.33){\alpha_1} \edge[dr]^{\alpha_2} \\
1 \edge[d]_{\alpha_1} \edge[dr]^{\alpha_2}  &&1 \edge[d]^{\alpha_2} & &&& 2 \edge[d]_{\beta_1} &2 \edge[d]_{\beta_2} &2 \edge[d]^{\beta_2} &2 \edge[d]^{\beta_1} &2 \edge[d]^{\beta_2} \\
2 &2 &2 & &&&1 &1 &1 &1 &1
}$$
\noindent Moreover, each of the modules $M$ graphed at the end of Section \ref{sec1} has precisely $4$ distinct skeleta, two of them being the forests
$$\xymatrixrowsep{1.5pc} \xymatrixcolsep{1pc}
\xymatrix{
&1 \edge[d]_{\beta} & &3 \edge[d]_{\gamma} \edge[dr]^{\tau_1} & &&&&&&1 \edge[d]_{\beta} &&3 \edge[d]_{\gamma} \edge[dr]^{\tau_2}  \\
&2 \edge[dl]_{\delta} \edge[dr]^{\epsilon} & &2 &4 &&&\txt{and} &&&2 \edge[d]_{\delta} &&2 \edge[d]^{\delta} &4  \\
2 &&2 & & &&&&&&2 &&2 
}$$
\noindent Further examples can be found in Sections \ref{sec8}--\ref{sec9} below and in \cite{BHT, GHZ}.  
%\medskip

It is readily checked that every $\la$-module has at least one skeleton, but only finitely many. Moreover, the set of all skeleta of $M$ is generically constant, as $M$ traces the modules in any irreducible component of $\modlad$. (This is a consequence of  openness of the subvarieties $\layS \subseteq \laySS$, introduced in Observation \ref{defobs4.4} below, combined with the fact that $\SS(M_x)$ is a generic invariant, the latter meaning that each irreducible component $\C$ of $\modlad$ intersects some $\laySS$ in a dense open subset of $\C$.)  From $Q$, $I$, and a minimal projective presentation of a $\la$-module $M$, one can algorithmically test whether an abstract skeleton $\S$ is a skeleton of $M$.  The decision whether $\layS \ne \varnothing$ is algorithmic as well; see \cite[Observation 3.2]{hier}. 

\begin{defobs} \label{defobs4.4} Let $\S$ be a skeleton with layering $\SS$ and $\udim \S = \udim \SS = \bd$. The subset 
$$\layS := \{ x \in \modlad \mid \S \ \text{is a skeleton of} \ M_x \}$$
is an open subvariety of $\laySS$ (but not open in $\modlad$, in general).  See \cite[Lemma 3.8]{classifying}. In particular, each irreducible component of $\layS$ closes off to an irreducible component of $\overline{\laySS}$.

Thus, the set of irreducible components of $\modlad$ is contained in the set 
$$\{ \overline{\D} \mid \D \ \text{is an irreducible component of some} \ \layS \ \text{with}\ \udim \S =  \bd \}.$$
Consequently, the goal set at the beginning of the subsection will be met if we can secure a generic module for each irreducible component of any $\layS$.
\end{defobs}

For background and further explanation regarding Observation \ref{defobs4.4}, we refer to \ref{sec9}.A.

\begin{step2*}  (Sketch.) Let $K_0$ be the smallest subfield of $K$ with the property that $\la$ is defined over $K_0$; the latter condition means that $I$ can be generated by relations in $K_0Q$. For the moment, we assume that $K$ has infinite transcendence degree over $K_0$. Imposing this condition is innocuous:  As is explained in Observation 2.2 of \cite{irredcompII}, neither the list of irreducible components of $\modlad$ nor the corresponding collections of their essential generic properties are affected by passage from $K$ to an appropriately enlarged algebraically closed base field.

Let $\overline{K_0}$ be the algebraic closure of $K_0$ within $K$.  Evidently, every automorphism in $\Gal(K:\overline{K_0})$ gives rise to a $\overline{K_0}$-algebra automorphism of $\la$ via a twist of scalars. One checks that the corresponding twisted version of 
$\la$ is Morita equivalent to $\la$. A Morita self-equivalence of $\lamod$ is said to be \emph{$\Gal(K:\overline{K_0})$-induced} if it arises from a twist of $\la$ relative to some automorphism in $\Gal(K:\overline{K_0})$. 
\end{step2*}

\begin{defthm} \label{defthm4.5} {\rm \cite[Section 4]{BHT}} Suppose $\D$ is an irreducible component of some $\layS$, where $\S$ is a skeleton with layering $\SS$.
We call a module $G$ in $\D$ {\rm generic for} $\D$ $($or generic for $\overline{\D}$$)$ if $G$ has all generic properties of $\D$ which are invariant under $\Gal(K:\overline{K_0})$-induced Morita self-equivalences of $\lamod$. 

For every irreducible component of $\layS$, there exists a generic module. Any two ge\-ner\-ic modules for the same irreducible component of $\layS$ differ only up to a $\Gal(K:\overline{K_0})$-in\-duced Morita self-equivalence of $\lamod$. 
\end{defthm}

We refer to \cite[Supplement 1 to Theorem 4.3]{BHT} for the construction of the modules $G$ guaranteed by Theorem \ref{defthm4.5}, but will be explicit for the algebras that will be particularly relevant in Sections \ref{sec5},\ref{sec8},\ref{sec9}, namely for truncated path algebras.

\subsection*{A special case: Generic modules over a truncated path algebra $\la$}  In this situation, $K_0$ is the prime field of $K$ and $\la_0 = \la$. In preparation for Section \ref{sec8}, we remind the reader of the fact that, over a truncated path algebra $\la$, all of the varieties $\laySS$ are irreducible. In particular, $\layS$ is open dense in $\laySS$ whenever $\S$ is a skeleton with layering $\SS$. Indeed, $\layS \ne \varnothing$ is automatic in the present situation (i.e., $\SS$ is realizable if and only if there exists a skeleton with this layering; see also Criterion \ref{crit8.2} below).  The skeleta with layering $\SS$  may be directly read off the quiver $Q$.

Consequently, generic modules for the nonempty varieties $\laySS$ are also available at a glance from $Q$ as follows.  Let $\S \subseteq P_0$ be any skeleton with layering $\SS$; here $P_0$ is a $\la$-projective cover of $\SS_0$, say $P_0 = \bigoplus_{1\le r\le t} \la \bz_r$ for some top elements $\bz_r$, as in \ref{sec4}.A.  A path $q\, \bz_r$ in $P_0$ is called \emph{$\S$-critical} if it does not belong to $\S$, but factors in the form $q = \alpha \cdot q_1 \, \bz_r$ where $\alpha$ is an arrow and $q_1\, \bz_r$ belongs to $\S$. Clearly, the $\S$-critical paths may in turn be listed by mere inspection of $Q$.  Our presentation of a generic module $G = P_0/ \Omega^1(G)$ for $\laySS$ is in terms of expansions of the $\S$-critical paths along a basis for $G$ induced from the linearly independent subset $\S$ of $P_0$.

\begin{theorem} \label{thm4.6} {\rm\cite[Theorem 5.12]{BHT}} Let $\SS$ and $P_0$ be as above.  Given any skeleton $\S$ with layering $\SS$, the following module $G$ is  generic for $\overline{\laySS}$:   
$$G = P_0 / R(\S), \ \ \text{where} \ \ 
R(\S) = \sum_{\substack{q\, \bz_r \\ \S\text{\rm-critical}}} \la \biggl( q\, \bz_r \ \ - \sum_{\substack{p\, \bz_s \in \S, \ \term(p)=\term(q), \\ \len(p\, \bz_s) \ge \len(q\, \bz_r)}} x_{q\, \bz_r, \, p\, \bz_s} p\, \bz_s \biggr);$$
here the $x_{-, -}$ are scalars in $K$ which are algebraically independent over $K_0$. 
\end{theorem}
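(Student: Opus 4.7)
The plan is to realize $G$ as the module corresponding to the generic point of an affine parametrization of $\layS$, and then to invoke the characterization of genericity in Theorem~\ref{defthm4.5}. Let $X(\S) = \AA^N$ be the affine space with coordinates $x_{q\bz_r,\, p\bz_s}$ indexed by pairs in which $q\bz_r$ is $\S$-critical and $p\bz_s \in \S$ satisfies $\term(p) = \term(q)$ and $\len(p\bz_s) \ge \len(q\bz_r)$. For each $c \in X(\S)(K)$, the construction in the statement (with the algebraically independent $x_{-,-}$ replaced by the coordinates of $c$) yields a $\la$-module $M_c = P_0/R_c(\S)$.

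First I would verify that $M_c \in \layS$ for every $c$. By construction, each $\S$-critical path $q\bz_r$ is identified in $M_c$ with a $K$-linear combination of images of basis elements $p\bz_s \in \S$ with $\len(p) \ge \len(q)$, so that the image of $q\bz_r$ lies in $J^{\len(q)} M_c$. A routine dimension count, using that $\S$ together with all $\S$-critical paths spans $P_0$ and that the generators of $R_c(\S)$ are indexed precisely by the $\S$-critical paths, shows that $\dim_K M_c = |\bd|$ and that $\S$ descends to a $K$-basis of $M_c$ respecting the layering $\SS$. Hence $c \mapsto M_c$ yields a morphism $\phi : X(\S) \to \layS$ defined over the prime field $K_0$.

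Next I would show that $\phi$ is surjective up to the action of $\autlap$ on presentations. Given $M \in \layS$, fix top elements $z_1,\dots,z_t$ of $M$ for which $\{p z_r : p\bz_r \in \S\}$ is a $K$-basis compatible with the radical layering, and expand each $\S$-critical word $qz_r$ in this basis. The endpoint constraint $\term(p) = \term(q)$ is forced by orthogonality of the idempotents $e_i$, while the layer-length constraint $\len(p) \ge \len(q)$ is forced because $qz_r$ lies in $J^{\len(q)} M$, and basis elements $pz_s$ with $\len(p) < \len(q)$ contribute linearly independently to strictly higher quotients $J^l M / J^{l+1} M$ by the skeleton property. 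The scalars so obtained define $c \in X(\S)(K)$ with $M \cong M_c$. Since $\laySS$ is irreducible in the truncated setting and $\layS$ is open dense in $\laySS$, it follows that $\phi$ is dominant onto $\laySS$.

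To conclude, since $X(\S)$ is $K_0$-rational and $\xi = (x_{q\bz_r,\, p\bz_s})$ has coordinates that are algebraically independent over $K_0$, the point $\xi$ lies in every nonempty $\Gal(K{:}\overline{K_0})$-stable open subset of $X(\S)$. Any generic property of the modules in $\overline{\laySS}$ that is invariant under $\Gal(K{:}\overline{K_0})$-induced Morita self-equivalences pulls back through $\phi$ to such a $\Gal(K{:}\overline{K_0})$-stable dense open subset of $X(\S)$. Therefore $G = M_\xi$ satisfies every such property, verifying the definition of a generic module for $\overline{\laySS}$ in Theorem~\ref{defthm4.5}. The main obstacle is the surjectivity-up-to-$\autlap$ claim in the third step: it rests on the fact that, in a truncated path algebra, the only relations are given by length truncation, so that no hidden algebraic dependencies constrain the coefficients in $\S$-critical expansions beyond what radical-layer compatibility already enforces.
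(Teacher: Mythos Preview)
The paper does not itself prove Theorem~\ref{thm4.6}; it is quoted from \cite[Theorem 5.12]{BHT}, and the surrounding discussion refers to \cite[Supplement~1 to Theorem~4.3]{BHT} for the construction. Your approach is the natural one and is precisely what the paper's framework suggests: for truncated $\la$, the affine chart $\grassS$ of Definition and Theorem~\ref{defthm9.1} coincides with all of $\AA^N$ (no proper subvariety, since the only relations are length truncations), and the point with coordinates algebraically independent over the prime field $K_0$ is generic in the sense of Theorem~\ref{defthm4.5}. So in substance your outline matches the intended argument.

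Two points deserve tightening. First, the map $\phi$ you describe lands naturally in $\grassS \subseteq \grasstbd$ via $c \mapsto R_c(\S)$, not directly in $\layS$; the passage to $\laySS$ goes through the orbit correspondence of Proposition~\ref{prop7.1}. The ``routine dimension count'' that $\S$ descends to a basis of $M_c$ is an induction on path length and is exactly where $\grassS = \AA^N$ (rather than a proper closed subvariety) is established for truncated $\la$; you correctly flag this as the crux. Second, in the final step, the dense open set $U \subseteq \overline{\laySS}$ witnessing a $\Gal$-invariant generic property need not itself be $\Gal(K{:}\overline{K_0})$-stable; only the full (possibly non-open) locus $W \supseteq U$ on which the property holds is stable. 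The fix is to replace $\phi^{-1}(U)$ by its Galois orbit $\bigcup_\sigma \sigma(\phi^{-1}(U))$, which is open, $\Gal(K{:}\overline{K_0})$-stable, and contained in $\phi^{-1}(W)$; by descent it is defined over $\overline{K_0}$, and then your algebraic-independence argument applies cleanly. With these adjustments the argument is sound.
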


In general, the cardinality of the $K_0$-algebraically independent set of scalars $x_{-, -}$ will be significantly larger than the generic number $\mu(\laySS) = \dim \laySS - \dim \text{orbit}(G)$ of parameters for $\laySS$; indeed, examples abound where the number of parameters in the above presentation of $G$ is redundant.

%%%%%%%%%%%%%%%%%%%%%%%%%%%%%
\section{More detail on the hereditary case}
\label{sec5}

Kac provided the following characterization of the canonical decomposition of a dimension  vector $\bd$ of $Q$.  For brevity of formulation, we use Schofield's notational convention:
\begin{align*}
\ext(\bd, \bd') &= \ext(\Rep_\bd(KQ), \Rep_{\bd'}(KQ))  \\
&= \min \{\dim \Ext^1_\la(M, M') \mid \udim M = \bd, \ \udim M' = \bd'\}.
\end{align*} 

\begin{theorem} \label{thm5.1} {\rm\cite[Proposition 3]{KacII}} A decomposition of the dimension vector $\bd$, say $\bd = \sum_{1 \le r \le s} \bd^{(r)}$, is the canonical decomposition $($= Kac decomposition$)$ of $\bd$ if and only if the following two conditions are satisfied:
\begin{itemize}
\item For  $1 \le r \le s$, the representations of $KQ$ with dimension vector $\bd^{(r)}$ generically have endomorphism rings equal to $K$. $($Such dimension vectors are called  {\rm Schur roots} of $Q$$)$.
\item $\ext(\bd^{(r)}, \bd^{(u)}) = 0$ whenever $r, u \in \{1, \dots, s\}$ are distinct.
\end{itemize}
\end{theorem}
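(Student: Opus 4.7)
The plan is to reduce both directions of Kac's characterization to the Crawley-Boevey--Schr\"oer theorems (Theorems \ref{thm4.1} and \ref{thm4.2}), exploiting the fact that for hereditary $\la = KQ$, the variety $\Rep_\bd(KQ)$ is an affine space, hence is its own sole irreducible component. Throughout, I set $\C_r := \Rep_{\bd^{(r)}}(KQ)$; each $\C_r$ is irreducible, so it is the unique irreducible component of $\Rep_{\bd^{(r)}}(KQ)$.

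For the ``if'' direction, suppose (1) and (2) hold. Condition (1) says the generic module in $\C_r$ has endomorphism ring $K$, and so is indecomposable; thus each $\C_r$ qualifies as an indecomposable irreducible component in the sense required by Theorem \ref{thm4.2}. Combined with condition (2), that theorem asserts that $\overline{\C_1 \oplus \cdots \oplus \C_s}$ is an irreducible component of $\Rep_\bd(KQ)$. Since the ambient variety is already irreducible, I conclude $\Rep_\bd(KQ) = \overline{\C_1 \oplus \cdots \oplus \C_s}$. Hence for $x$ in a dense open subset of $\Rep_\bd(KQ)$, $M_x \cong \bigoplus_r M_x^{(r)}$ with each $M_x^{(r)}$ generic in $\C_r$, and therefore indecomposable with $\udim M_x^{(r)} = \bd^{(r)}$. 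By uniqueness in Proposition \ref{prop2.7}, the given decomposition $\bd = \sum \bd^{(r)}$ is the Kac decomposition.

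For the ``only if'' direction, assume $\bd = \sum \bd^{(r)}$ is the canonical decomposition. Then on a dense open subset of $\Rep_\bd(KQ)$ the modules split as direct sums of indecomposables with dimension vectors $\bd^{(r)}$, forcing $\Rep_\bd(KQ) = \overline{\C_1 \oplus \cdots \oplus \C_s}$. As $\Rep_\bd(KQ)$ is (trivially) an irreducible component of itself, Theorem \ref{thm4.2} yields $\ext(\bd^{(r)}, \bd^{(u)}) = 0$ for all $r \ne u$, giving (2). For (1), first observe that the choice of $\bd^{(r)}$ as the dimension vector of a generically indecomposable summand already forces the generic module in $\C_r$ to be indecomposable, since generic decomposability in $\C_r$ would, via direct summing, produce a strictly finer indecomposable decomposition of $\bd$, contradicting the uniqueness clause of Proposition \ref{prop2.7}. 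So one is reduced to the implication: generically indecomposable $\implies$ Schur root.

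This last implication is the crux of the theorem and the only part that genuinely uses the structure of hereditary path algebras rather than the formal machinery of Section \ref{sec4}.A, and I anticipate it to be the main obstacle. Kac's treatment rests on the classification of dimension vectors of indecomposables into real and imaginary roots of $Q$: for a real root $\bd^{(r)}$ one has a unique indecomposable of dimension $\bd^{(r)}$ up to isomorphism, whose endomorphism algebra, being local and finite-dimensional and simultaneously satisfying $\dim \End - \dim \Ext^1 = \langle \bd^{(r)}, \bd^{(r)}\rangle = 1$ for the Euler--Ringel form, must reduce to $K$; for an imaginary root, one exhibits an explicit $\mu(\bd^{(r)})$-parameter family of pairwise non-isomorphic indecomposables and invokes upper semicontinuity of $x \mapsto \dim \End_\la(M_x)$ (Section \ref{sec2}.A) to pin down the generic endomorphism dimension at $1$. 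I would quote this root dichotomy from \cite{KacI, KacII} rather than reproduce it here.
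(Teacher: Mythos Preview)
The paper does not prove Theorem~\ref{thm5.1}; it merely quotes the statement from \cite[Proposition~3]{KacII}. So there is no in-paper proof to compare against, and your proposal should be judged on its own merits.

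Your reduction to Theorems~\ref{thm4.1} and~\ref{thm4.2} is legitimate and tidy: since $\Rep_\bd(KQ)$ is irreducible, the Crawley-Boevey--Schr\"oer criterion collapses to exactly the two conditions in Kac's statement, and your handling of both directions via Theorem~\ref{thm4.2} is correct. (Historically this is backwards---\cite{CBS} was written to generalize Kac---but there is no logical circularity, as the proofs in \cite{CBS} are independent.) You also correctly isolate the one step that is genuinely hereditary and does not fall out of the formal component machinery: that a generically indecomposable $\Rep_{\bd^{(r)}}(KQ)$ must have generic endomorphism ring $K$. The paper itself flags, just after the ``Special cases'' block in Section~\ref{sec2}.B, that this implication fails for general $\la$.

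Your sketch of that last step is a bit loose, though. For a real root, the identity $\dim\End(M) - \dim\Ext^1(M,M) = \langle \bd^{(r)}, \bd^{(r)}\rangle = 1$ does not by itself force $\dim\End(M) = 1$; you also need that the unique indecomposable of real-root dimension has a dense orbit, whence $\Ext^1(M,M) = 0$. For an imaginary root, exhibiting a $\mu(\bd^{(r)})$-parameter family of non-isomorphic indecomposables and invoking upper semicontinuity of $\dim\End$ is not quite enough on its own to conclude the generic value is $1$; Kac's actual argument involves a sharper count. Since you already plan to cite \cite{KacI, KacII} for this step rather than reprove it, that is fine---just make sure your paraphrase of what is being quoted is accurate.
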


Schofield filled the remaining gap between Kac's theoretical description of canonical decompositions and their algorithmic accessibility by analyzing further generic invariants of the $\bd$-dimensional representations, namely the dimension vectors which are generically attained on submodules of the $\bd$-dimensional modules.

\begin{theorem} \label{thm5.2} {\rm\cite[Theorem 3.3]{Scho}} For dimension vectors $\bd$ and $\bd'$ of $Q$, the following are equivalent:
%\smallskip

{\bf(a)} Generically, the representations of $KQ$ with dimension vector $\bd$ have a subrepresentation with dimension vector $\bd'$.
%\smallskip

{\bf(b)} Every representation of $KQ$ with dimension vector $\bd$ has a subrepresentation with dimension vector $\bd'$.
%\smallskip

{\bf(c)} $\ext(\bd', \bd - \bd') = 0$.
\end{theorem}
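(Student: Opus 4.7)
I would work with the incidence variety
$$Z = \bigl\{(x, U) \in \Rep_\bd(KQ) \times \textstyle\prod_i \Gr(d'_i, d_i) \bigm| U \text{ is a subrepresentation of } M_x \bigr\},$$
together with its two projections $\pi_1: Z \to \Rep_\bd(KQ)$ and $\pi_2: Z \to \prod_i \Gr(d'_i, d_i)$. The second projection realises $Z$ as a vector bundle, since its fibre over a fixed graded subspace is the affine space of tuples $(x_\alpha)_{\alpha\in Q_1}$ preserving that subspace. Thus $Z$ is smooth and irreducible, and a direct count gives $\dim Z = \dim\Rep_\bd(KQ) + \langle \bd', \bd - \bd' \rangle$, where $\langle -, - \rangle$ denotes the Euler form of $Q$. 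The first projection $\pi_1$ is proper (the Grassmannian factor being so), whence $\pi_1(Z)$ is a closed subset of $\Rep_\bd(KQ)$, and the fibre $\pi_1^{-1}(M)$ equals the quiver Grassmannian of $\bd'$-dimensional subrepresentations of $M$.

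The equivalence $(a) \Leftrightarrow (b)$ is then immediate: $(b) \Rightarrow (a)$ is trivial, and for $(a) \Rightarrow (b)$ one observes that $\pi_1(Z)$ is exactly the locus in (b). Being closed, it must coincide with the irreducible variety $\Rep_\bd(KQ)$ as soon as it contains a dense open subset.

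For $(c) \Rightarrow (a)$ I would use upper semicontinuity of $\dim\Ext^1$ to choose generic $N' \in \Rep_{\bd'}(KQ)$ and $N'' \in \Rep_{\bd - \bd'}(KQ)$ with $\Ext^1(N', N'') = 0$, and probe $\pi_1$ at the split extension $M_0 = N' \oplus N''$ with tautological subrepresentation $M'_0 = N'$. At $M'_0$, the quiver Grassmannian $\pi_1^{-1}(M_0)$ has tangent space $\Hom(N', N'')$ and obstructions in $\Ext^1(N', N'') = 0$, so it is smooth of local dimension $\dim\Hom(N', N'')$. Combining this local fibre dimension with $\dim Z = \dim\Rep_\bd(KQ) + \langle \bd', \bd - \bd' \rangle$ and the Euler-form identity $\dim\Hom(N', N'') - \dim\Ext^1(N', N'') = \langle \bd', \bd - \bd' \rangle$ (valid over the hereditary algebra $KQ$), one concludes that $\pi_1(Z)$ has local dimension $\dim\Rep_\bd(KQ)$ at $M_0$. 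Closedness of $\pi_1(Z)$ together with irreducibility of $\Rep_\bd(KQ)$ then force $\pi_1(Z) = \Rep_\bd(KQ)$.

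For $(a) \Rightarrow (c)$, surjectivity of $\pi_1$ propagates to surjectivity of the companion map $\phi: Z \to \Rep_{\bd'}(KQ) \times \Rep_{\bd - \bd'}(KQ)$ (well-defined up to the $\GL(\bd') \times \GL(\bd - \bd')$-action) sending $(M, U) \mapsto (U, M/U)$, since every pair $(N', N'')$ is realised by the split extension $N' \oplus N''$. Hence at the generic point $(M, M') \in Z$, the pair $(M', M/M')$ is generic in its product variety, so $\dim\Ext^1(M', M/M') = \ext(\bd', \bd - \bd')$ and $\dim\Hom(M', M/M') = \mathrm{hom}(\bd', \bd - \bd')$. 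The generic fibre of $\pi_1$ has dimension $\dim Z - \dim\Rep_\bd(KQ) = \langle \bd', \bd - \bd' \rangle = \mathrm{hom} - \ext$, whereas the tangent space to the fibre at $M'$ has dimension $\mathrm{hom}$. A generic smoothness argument applied to the dominant map $\pi_1$ between smooth irreducible varieties forces local fibre dimension to match tangent dimension, yielding $\mathrm{hom} = \mathrm{hom} - \ext$, i.e., $\ext(\bd', \bd - \bd') = 0$. The hard part will be this last step in positive characteristic, where generic smoothness is not automatic; one bypasses it by working directly with the explicit presentation of $Z$ as an affine-space bundle over the Grassmannian and extracting $\ext$ from the cokernel of the associated $\Hom$-to-extensions map.
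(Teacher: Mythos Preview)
The paper is a survey and does not supply its own proof of Theorem~\ref{thm5.2}; it merely quotes the result from \cite[Theorem 3.3]{Scho}. There is therefore no in-paper argument to compare against.

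Your approach via the incidence variety $Z$ is the standard one (it is essentially Schofield's original argument, and the version in Crawley-Boevey's lecture notes \cite{CB}). The dimension count, the properness of $\pi_1$, and the deduction of $(a)\Leftrightarrow(b)$ are all correct. For $(c)\Rightarrow(a)$ your local-dimension comparison at the split point is fine, though one can shortcut it: since $Z$ is irreducible of dimension $\dim\Rep_\bd(KQ)+\langle\bd',\bd-\bd'\rangle$ and the fibre at $(N'\oplus N'',N')$ has dimension exactly $\langle\bd',\bd-\bd'\rangle$, the image must have full dimension, hence equals $\Rep_\bd(KQ)$ by closedness. For $(a)\Rightarrow(c)$ your generic-smoothness argument is correct over a field of characteristic zero; you rightly flag that this step needs a substitute in positive characteristic. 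Two small points to tighten: the map $\phi$ to $\Rep_{\bd'}(KQ)\times\Rep_{\bd-\bd'}(KQ)$ is not literally a morphism on $Z$ without first fixing graded complements (equivalently, working on the pullback to a frame bundle over the Grassmannian); and ``surjectivity of $\phi$'' alone does not quite give that the \emph{generic} point of $Z$ lands in the generic locus of the target---you need irreducibility of $Z$ plus dominance, which you have, so this is only a phrasing issue.
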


Asking that the $KQ$-modules with dimension vector $\bd$ generically have submodules with dimension vector $\bd'$, as well as submodules with dimension vector $\bd - \bd'$, is thus equivalent to imposing the equalities $\ext(\bd', \bd - \bd') = \ext(\bd - \bd', \bd') = 0$.  In light of the fact that $\dim \Ext^1(-, - )$ is upper semicontinuous, these $\ext$-values are attained on a dense open subset of $\Rep_{\bd'}(KQ) \times \Rep_{\bd - \bd'}(KQ)$, whence we recoup the second part of Kac's result, Theorem \ref{thm5.1}.  In particular, we conclude that the canonical decomposition of $\bd$ may be gleaned from the  set $\operatorname{Sub}(\bd)$ of dimension vectors which are generically attained on the submodule lattices of the modules in $\Rep_{\bd}(KQ)$.  Schofield further recoined condition (c) of Theorem  \ref{thm5.2} into a format (involving the Euler form of $Q$) permitting to recursively reduce the vanishing test for $\ext( - , -)$ to successively smaller dimension vectors \cite[Theorem 5.4]{Scho}; this allows for computation of $\operatorname{Sub}(\bd)$ from $Q$.  In this connection, we point to a simplification of Schofield's algorithm due to Derksen and Weyman \cite[Section 4]{DeWe}.

\begin{return2.9*}  The set of dimension vectors generically attained on the submodules of the modules in $\Rep_{(2,2)}(KQ)$ is 
$$\operatorname{Sub}(2,2) = \{ (0,0), (2,2), (1,1), (1, 2), (0,1), (0,2)\},$$
which confirms the canonical decomposition of $\bd = (1,1) + (1,1)$.  In particular,  the only dimension vectors $\le \bd$ excluded from $\operatorname{Sub}(2,2)$ are $(1,0)$, $(2, 0)$, and $(2,1)$. 
\end{return2.9*} 

Clearly, the postulate that the generic modules for $\Rep_\bd(KQ)$ should all belong to the same isomorphism class is tantamount to the existence of a dense orbit in $\Rep_\bd(KQ)$.  Once the canonical decomposition of $\bd$ is available, the issue may be decided by means of the following result of Kac.

\begin{theorem} \label{thm5.3} {\rm\cite[Proposition 4]{KacII}} If $\bd = \sum_{1 \le r \le s} \bd^{(r)}$ is the canonical decomposition of $\bd$, then the generic number of parameters of $\Rep_\bd(KQ)$ 
is 
$$\mu(\bd) = \sum_{1 \le r \le s} \bigl(1 - \langle \bd^{(r)}, \bd^{(r)} \rangle \bigr),$$
where $\langle -,- \rangle$ denotes the Euler form of $Q$.  In particular, $\Rep_\bd(KQ)$ contains a dense $\GL(\bd)$-orbit precisely when $\sum_{1 \le r \le s} \bigl(1 - \langle \bd^{(r)}, \bd^{(r)} \rangle \bigr) = 0$.
\end{theorem}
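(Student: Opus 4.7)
The plan is to express $\mu(\bd)$ through the generic dimension of the endomorphism ring. The orbit-dimension identity $\dim\GL(\bd).x = \dim\GL(\bd) - \dim\End_{KQ}(M_x)$ from Section \ref{sec2}.B, combined with upper semicontinuity of $x \mapsto \dim\End_{KQ}(M_x)$ and irreducibility of $\Rep_\bd(KQ)$, gives
$$\mu(\bd) = \dim\Rep_\bd(KQ) - \dim\GL(\bd) + \hom(\bd),$$
where $\hom(\bd)$ denotes the generic value of $\dim\End_{KQ}(M_x)$. Substituting the hereditary formulas $\dim\Rep_\bd(KQ) = \sum_{\alpha\in Q_1} d_{\strt(\alpha)}d_{\term(\alpha)}$ and $\dim\GL(\bd) = \sum_i d_i^2$ collapses the first two terms to $-\langle\bd,\bd\rangle$, so the task reduces to computing $\hom(\bd)$ in terms of the canonical decomposition.

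Next I would realize a generic module as a direct sum $G = \bigoplus_{r=1}^s M_r$ with $M_r$ generic in $\Rep_{\bd^{(r)}}(KQ)$. Theorem \ref{thm5.1} ensures that each $\bd^{(r)}$ is a Schur root (so generically $\End_{KQ}(M_r) = K$) and that $\ext(\bd^{(r)}, \bd^{(u)}) = 0$ for $r \ne u$; both are open conditions, so such a $G$ exists, and by Proposition \ref{prop2.7} it is generic for $\Rep_\bd(KQ)$. Hence
$$\hom(\bd) = \dim\End_{KQ}(G) = \sum_{r,u} \dim\Hom_{KQ}(M_r, M_u) = s + \sum_{r \ne u}\dim\Hom_{KQ}(M_r, M_u),$$
and for the off-diagonal terms the hereditary identity $\dim\Hom - \dim\Ext^1 = \langle\,\cdot\,,\,\cdot\,\rangle$, coupled with the generic vanishing of $\Ext^1_{KQ}(M_r, M_u)$, yields $\dim\Hom_{KQ}(M_r, M_u) = \langle\bd^{(r)}, \bd^{(u)}\rangle$ for each such pair.

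Plugging back into $\mu(\bd) = \hom(\bd) - \langle\bd,\bd\rangle$ and expanding $\langle\bd,\bd\rangle = \sum_{r,u}\langle\bd^{(r)}, \bd^{(u)}\rangle$ by bilinearity, the cross terms cancel and we arrive at $\mu(\bd) = \sum_{r=1}^s\bigl(1 - \langle\bd^{(r)}, \bd^{(r)}\rangle\bigr)$, as claimed. The final assertion is immediate: $\Rep_\bd(KQ)$ being an irreducible affine space, a dense $\GL(\bd)$-orbit exists iff the maximal orbit attains dimension $\dim\Rep_\bd(KQ)$, iff $\mu(\bd) = 0$. The most delicate step I anticipate is the identification of $\hom(\bd)$ with $\dim\End_{KQ}(G)$ for the prescribed $G$: this relies on the stronger content of Proposition \ref{prop2.7} and Theorem \ref{thm5.1}, which guarantee not merely that a generic $\bd$-dimensional module decomposes with summands of the prescribed dimension vectors, but that the individual summands can simultaneously be chosen generic in their respective representation varieties.
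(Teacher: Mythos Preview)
The paper does not supply its own proof of Theorem \ref{thm5.3}; the result is stated with attribution to Kac \cite[Proposition 4]{KacII}, as is typical in a survey. Your argument is correct and is essentially the standard one: reduce $\mu(\bd)$ to $\hom(\bd) - \langle\bd,\bd\rangle$ via the orbit--stabilizer identity, then evaluate $\hom(\bd)$ on a generic direct sum using the Schur and $\Ext$-vanishing conditions from Theorem \ref{thm5.1}, and cancel the cross terms against the bilinear expansion of $\langle\bd,\bd\rangle$.

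The point you flag as delicate is the right one to flag, and your handling of it is adequate. To make it airtight: the direct sum map $\GL(\bd)\times\prod_r \Rep_{\bd^{(r)}}(KQ) \to \Rep_\bd(KQ)$ is dominant by definition of the canonical decomposition, so the image of the dense open locus in the source (where each $M_r$ is Schur and all cross-$\Ext^1$ vanish) is a dense constructible subset of $\Rep_\bd(KQ)$. On this image, $\dim\End_{KQ}(-)$ takes the constant value $s + \sum_{r\ne u}\langle\bd^{(r)},\bd^{(u)}\rangle$; upper semicontinuity of $x\mapsto\dim\End_{KQ}(M_x)$ then forces this constant to equal the generic minimum $\hom(\bd)$. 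This is exactly what you are invoking, just said explicitly.
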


An explicit presentation of ``the" generic $\bd$-dimensional $KQ$-module $G = G(\bd)$ is available from Theorem \ref{thm4.6}. (It suffices to observe that $KQ$ is a truncated path algebra.) It is based on the generic radical layering of the modules in $\Rep_\bd(KQ)$, which is supplied by the following recursion formula.  Here $\mathbf{A}$ denotes the adjacency matrix of $Q$, i.e., $\mathbf{A}$ is the $|Q_0| \times |Q_0|$-matrix whose entry $\mathbf{A}_{ij}$ counts the number of arrows from $e_i$ to $e_j$. 

\begin{proposition} \label{prop5.4} {\cite[Proposition 4.1]{irredcompII}}  Suppose that the lengths of the paths in $Q$ are bounded by $L$.  Given any dimension vector $\bd$ of $Q$, let $\SS(\bd) = (\SS_0, \dots, \SS_L)$ be the generic radical layering of the modules in $\Rep_\bd(KQ)$.  Then the dimension vectors $\mathbf{t}^{(l)} = \udim \SS_l$ for $0 \le l \le L$ are given by
$\mathbf{t}^{(0)}  = \sup\, \{\mathbf{0}, \, \bd - \bd \cdot \mathbf{A} \}$, and
$$\mathbf{t}^{(l+1)} = \sup\, \biggl\{\mathbf{0}, \, \biggl(\bigl(\bd - \sum_{i \le l} \mathbf{t}^{(i)}\bigr) - \bigl(\bd - \sum_{i \le l} \mathbf{t}^{(i)}\bigr)  \cdot \mathbf{A} \biggr)  \biggr\},$$
where the suprema are taken with respect to the componentwise partial order on $\ZZ^n$.
\end{proposition}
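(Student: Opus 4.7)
The plan is to translate the formula into a statement about $\udim J^l M$. Setting $\mathbf{h}^{(l)} := \bd - \sum_{i < l} \mathbf{t}^{(i)}$, the recursion in the proposition becomes
\[
  \mathbf{h}^{(l+1)} \;=\; \mathbf{h}^{(l)} - \sup\{\mathbf{0},\, \mathbf{h}^{(l)} - \mathbf{h}^{(l)} \cdot \mathbf{A}\} \;=\; \inf\{\mathbf{h}^{(l)},\, \mathbf{h}^{(l)} \cdot \mathbf{A}\},
\]
and the assertion is equivalent to: $\udim J^l M_x = \mathbf{h}^{(l)}$ on a dense open subset of the irreducible variety $\Rep_\bd(KQ)$ for every $l \ge 0$. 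Since $x \mapsto \dim e_j J^l M_x$ is lower semicontinuous (it is the dimension of a sum of images of polynomial matrices), its generic value equals its maximum on $\Rep_\bd(KQ)$, so the task reduces to showing this maximum equals $\mathbf{h}^{(l)}_j$ for each $(l,j)$.

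I would first establish the upper bound $\udim J^l M \le \mathbf{h}^{(l)}$ for \emph{every} $M \in \Rep_\bd(KQ)$, by induction on $l$. The case $l = 0$ is trivial, and for the inductive step
\[
  e_j J^{l+1} M \;=\; \sum_{\alpha: i \to j} x_\alpha\bigl(e_i J^l M\bigr)
\]
is both a subspace of $e_j J^l M$ and the image of a linear map out of $\bigoplus_{\alpha: i \to j} e_i J^l M$, so its dimension is bounded by both $\udim J^l M_j$ and $(\udim J^l M \cdot \mathbf{A})_j$. Componentwise monotonicity of $\inf$ and of multiplication by the non-negative matrix $\mathbf{A}$, together with the inductive hypothesis, then gives $\udim J^{l+1} M_j \le \inf\{\mathbf{h}^{(l)}_j,\, (\mathbf{h}^{(l)} \cdot \mathbf{A})_j\} = \mathbf{h}^{(l+1)}_j$.

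For the matching lower bound, I would exhibit a single module $G^* \in \Rep_\bd(KQ)$ with $\udim J^l G^* = \mathbf{h}^{(l)}$ for all $l$; combined with lower semicontinuity this forces the generic value to equal $\mathbf{h}^{(l)}$. The natural source is Theorem~\ref{thm4.6}, applied to a skeleton with layering $(\mathbf{t}^{(0)}, \dots, \mathbf{t}^{(L)})$. A layered-forest count shows that such a skeleton exists precisely when $\mathbf{t}^{(l+1)} \le \mathbf{t}^{(l)} \cdot \mathbf{A}$ componentwise for every $l$; substituting $\mathbf{t}^{(l)} = \mathbf{h}^{(l)} - \mathbf{h}^{(l+1)}$, this reduces to $\mathbf{h}^{(l+1)} \le \mathbf{h}^{(l)} \cdot \mathbf{A}$, which is immediate from the defining infimum. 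Since $\la = KQ$ is a truncated path algebra of Loewy length at most $L+1$, Theorem~\ref{thm4.6} then produces the required $G^*$ with the prescribed radical layering, whence $\udim J^l G^* = \mathbf{h}^{(l)}$.

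The main obstacle is this existence step. The upper bound is essentially a rank-of-matrix estimate propagated through the radical filtration, but pinning down a single module that actually saturates the bound is more delicate: a naive choice of ``generic'' matrices does not transparently achieve it, because the subspaces $e_i J^l M$ themselves depend recursively on the chosen matrices. Routing the existence question through realizability of the candidate layering and then invoking Theorem~\ref{thm4.6} is what sidesteps this circularity, converting the problem into a transparent combinatorial check that ultimately rests on the elementary inequality $\mathbf{h}^{(l+1)} \le \mathbf{h}^{(l)} \cdot \mathbf{A}$.
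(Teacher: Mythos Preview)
The paper does not supply its own proof of Proposition~\ref{prop5.4}; it merely quotes the result from \cite[Proposition 4.1]{irredcompII}. So there is no in-paper argument to compare against, and your proposal has to be judged on its own merits.

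Your argument is sound. The reformulation in terms of $\mathbf{h}^{(l)}=\udim J^lM$ is correct, the upper bound is the right rank estimate propagated inductively, and the lower-bound strategy (realize the candidate layering, then invoke lower semicontinuity on the irreducible variety $\Rep_\bd(KQ)$) is exactly what is needed. Two small remarks. First, you do not need the full force of Theorem~\ref{thm4.6}; Criterion~\ref{crit8.2} already says that $\laySS\ne\varnothing$ is equivalent to $\mathbf{t}^{(l+1)}\le\mathbf{t}^{(l)}\cdot\mathbf{A}$ for all $l$, which directly hands you a module with the desired radical layering. Second, the sentence ``substituting $\mathbf{t}^{(l)}=\mathbf{h}^{(l)}-\mathbf{h}^{(l+1)}$, this reduces to $\mathbf{h}^{(l+1)}\le\mathbf{h}^{(l)}\cdot\mathbf{A}$'' is a bit compressed: after substitution the inequality reads $\mathbf{h}^{(l+1)}-\mathbf{h}^{(l+2)}\le\mathbf{h}^{(l)}\cdot\mathbf{A}-\mathbf{h}^{(l+1)}\cdot\mathbf{A}$, and one checks it componentwise using $\mathbf{h}^{(l+2)}_j=\min\{\mathbf{h}^{(l+1)}_j,(\mathbf{h}^{(l+1)}\cdot\mathbf{A})_j\}$. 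In the first case the left side is $\mathbf{h}^{(l+1)}_j\le(\mathbf{h}^{(l)}\cdot\mathbf{A})_j$; in the second it is $(\mathbf{h}^{(l+1)}\cdot\mathbf{A})_j\le(\mathbf{h}^{(l)}\cdot\mathbf{A})_j$, which holds because $\mathbf{h}^{(l+1)}\le\mathbf{h}^{(l)}$ and $\mathbf{A}\ge 0$. So both bounds coming from the infimum are used, not just the one you name. With that clarification, the proof goes through.
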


As for the reach of the generic theory of $\modlad$:  Clearly, the Loewy lengths of the $\bd$-dimensional $KQ$-modules are generically constant.  (The generic value is the maximal one, i.e.,  the least $m$ such that the entry $\SS_m$ of the generic radical layering $\SS = \SS(\bd)$ vanishes.)  Thus, the above generic results only reach the $\bd$-dimensional modules of maximal Loewy length.  Modules of any smaller Loewy length evidently arise as representations of suitable truncations of $KQ$; as such, they are in turn generically accessible via the results in \ref{sec8}.D.

%%%%%%%%%%%%%%%%%%%%%%%%%%%%%
\section{More detail on the tame non-hereditary case}
\label{sec6}

The upcoming sample results are aimed at illustration, rather than completeness.
%\smallskip

Recall from Section \ref{sec3}.B that the Gelfand-Ponomarev algebras are those of the form $\la = KQ/ \langle \alpha^r, \beta^s, \alpha \beta, \beta \alpha \rangle$, where $r,s\ge2$ and $Q$ is the quiver
$$\xymatrix{
\bullet \ar@(ul,dl)_{\alpha} \ar@(ur,dr)^{\beta}
}$$

\subsection*{$\bullet$ {\bf The Gelfand-Ponomarev algebra with $J^2 = 0$}}  Work of Donald-Flanigan \cite{DoFl} and Morrison \cite{Mor} showed, in particular, that the only irreducible components containing infinitely many orbits of maximal dimension occur for even dimension $d = 2m$.  The generic modules for these components are of the form $\bigoplus_{1 \le i \le m} \la / \la (\beta - x_i \alpha)$, where $x_1, \dots, x_m \in K$ are algebraically independent over the prime field.  All other components are closures of single orbits, each represented by a generic module that is unique up to isomorphism.  The generic modules occurring for these latter components of $\Rep_d(\la)$ are precisely the $d$-dimensional direct sums of  modules of the form $(U_k)^u \oplus (U_{k+1})^v$, where $U_k$ is the $(2k+1)$-dimensional string module with graph
$$\xymatrixrowsep{1.5pc} \xymatrixcolsep{0.75pc}
\xymatrix{
\bullet \edge[dr]_{\alpha} &&\bullet \edge[dl]_{\beta} \edge[dr]_{\alpha} &\ar@{}[r]|{\cdots} &&\bullet \edge[dr]_{\alpha} &&\bullet \edge[dl]_{\beta}  \\
&\bullet &&\bullet &\ar@{}[r]|{\cdots} &&\bullet
}$$
\noindent next to the  $K$-duals of such modules.  In reference to the results of Section \ref{sec5}, we note that $\Ext_\la^1(U_k, U_l)  = \Ext_\la^1(U_l, U_k)  = 0$ if and only if $|k-l| \le 1$ (see \cite[Proof of Theorem 5.1]{Mor}).  In Section \ref{sec8}.B, this picture will be integrated into the general solution to the component problem for algebras with vanishing radical square.   In particular, it will be seen that, for any local algebra  $\la$ with $J^2 = 0$ and $\dim J =r$, the irreducible components of $\Rep_d(\la)$ are in bijective correspondence with the partitions $d = u + v$ such that $u \le rv$ and $v \le ru$. 

Closely related to this algebra, with respect to the component problem, is the \emph{Carlson algebra} $K[x,y]/\langle x^2, y^2\rangle$. In \cite{RiRuSm} Riedtmann, Rutscho and Smal\o\ determined the irreducible components of its module varieties in terms of affine equations.

\subsection*{$\bullet$ {\bf Arbitrary Gelfand-Ponomarev algebras}} Schr\"oer's solution of the component problem for arbitrary Gelfand-Ponomarev algebras \cite{Schro} is very complete, in that it again allows to specify generic modules for the irreducible components of the varieties $\modlad$.  His classification separately describes the components with infinitely many orbits of maximal dimension and those containing dense orbits.  We include a graphic illustration of the outcome in a special case, addressed in \cite[Theorem 1.1]{Schro}:  Namely, if $d = r = s \ge 2$, then $\Rep_d(\la)$ has precisely $d - 1$ irreducible components $\C_i$, $1 \le i \le d-1$, each including infinitely many orbits of maximal dimension.  The generic modules for the $\C_i$ may be visualized as follows:
$$\xymatrixrowsep{1.25pc} \xymatrixcolsep{1pc}
\xymatrix{
&\bullet \edge[d]_{\alpha} \edge@/^1pc/[dddd]^{\beta}  &&&&&&\bullet \edge[d]_{\alpha} \edge[dr]^{\beta}  &&&&&&\bullet \edge[d]^{\beta} \edge@/_1pc/[dddd]_{\alpha}  &  \\
\save[-1,0].[3,0]!C *\frm{\{} = "lbl1" \restore \save"lbl1"+L \drop++!R{\begin{matrix} d \\ \txt{layers}\end{matrix}} \restore &\bullet  &&&&&\save[-1,0].[2,0]!C *\frm{\{} = "lbl1" \restore \save"lbl1"+L \drop++!R{\begin{matrix} d-1 \\ \txt{layers}\end{matrix}} \restore &\bullet \ar@{}[d]|{\vdots} &\bullet \edge[ddl]^{\beta}  &&&&&\bullet &\save[-1,0].[3,0]!C *\frm{\}} = "lbl3" \restore \save"lbl3"+R \drop++!L{\begin{matrix} d \\ \txt{layers}\end{matrix}} \restore  \\
&\vdots  &&&&&&\bullet \edge[d]_{\alpha}  &&&\cdots&&&\vdots   \\
&\bullet \edge[d]_{\alpha}  &&&&&&\bullet  &&&&&&\bullet \edge[d]^{\beta}  \\
&\bullet  &&&&&&  &&&&&&\bullet  &
}$$
In each case, the open subset of $\C_i$ consisting of the orbits of maximal dimension thus has a moduli space isomorphic to $\AA^1$.  The general description of the irreducible components of the $\Rep_d(\la)$ is combinatorially too involved for inclusion here.

\subsection*{$\bullet$  {\bf Gentle algebras}} 
A \emph{gentle string algebra} is an algebra of the form $\la = KQ/I$ where $I$ is generated by certain paths of length $2$ such that $Q$ and $I$ have the following additional properties: $\bullet$ For each vertex $v$, there are at most two arrows leaving $v$ and at most two arrows entering $v$; $\bullet$  Whenever $\alpha$ is an arrow and $\beta_1$, $\beta_2$ are distinct arrows ending in $\strt(\alpha)$, precisely one of the paths $\alpha\beta_i$ belongs to $I$; $\bullet$  Whenever $\alpha$ is an arrow and $\gamma_1, \gamma_2$ are distinct arrows starting in $\term(\alpha)$, precisely one of the paths $\gamma_i \alpha$ belongs to $I$.
 Assuming $Q$ to be acyclic, Carroll and Weyman \cite{CW} determined the irreducible components of the varieties $\modlad$ in terms of certain functions $r : Q_1 \rightarrow \ZZ_{\ge0}$ called \emph{rank sequences}.  When the set of rank sequences is equipped with the componentwise partial order, the irreducible components of any $\modlad$ are the sets
$$\{ x \in \modlad \mid \rank x_\alpha \le r(\alpha) \ \ \text{for all} \ \ \alpha \in Q_1 \},$$
where $r$ traces the maximal rank sequences \cite[Proposition 5.2]{CW}. Generic modules for these components were constructed by Carroll \cite[Corollaries 3.6, 3.8]{Car}.

\subsection*{$\bullet$ {\bf Tubular algebras}}  In \cite{GeiSchI}, Geiss and Schr\"oer provided a classification of the irreducible components of $\modlad$ when $\la$ is a certain type of ``tubular extension" of a tame concealed algebra $\la_0$.  Minimal background: An algebra is \emph{tame concealed} if it results from a tame hereditary algebra via tilting by a preprojective tilting module, that is, by a tilting module $T \in \lamod$ with the property that $\tau^{k} (T)$ is projective for some $k \ge 0$;  here $\tau$ is the Auslander-Reiten translate.  Very roughly, a \emph{tubular extension} of $\la_0$ is an extension resulting from a finite sequence of modified one-point extensions at modules coming from distinct tubes in the Auslander-Reiten quiver of $\la_0$.  This class of tame algebras was introduced and analyzed by Ringel (see \cite{RinI}), in light of his observation that it constitutes a large class of algebras whose module categories inherit pivotal assets from those of tame hereditary algebras: Namely, the Auslander-Reiten quiver consists of a preinjective and a preprojective component, next to infinitely many $\PP^1(K)$-families of tubes.

\subsection*{$\bullet$  {\bf Canonical decompositions over preprojective algebras}}  The \emph{preprojective algebra} $\la = P(Q)$ of a quiver $Q = (Q_0, Q_1)$ is obtained as follows.  Supplement each arrow $\alpha: e_i \rightarrow e_j$ in $Q_1$ by an arrow $\alpha^*: e_j \rightarrow e_i$ to arrive at a new quiver $\overline{Q} = (Q_0, Q_1 \sqcup Q_1^*)$, where $Q_1^* = \{\alpha^* \mid \alpha \in Q_1\}$.  Then $\la = K\overline{Q} / \langle \sum_{\alpha \in Q_1} \alpha^* \alpha - \alpha \alpha^* \rangle$.  We refer to \cite{Rin} for further background.

In \cite{GeiSchI} and \cite{GeiSchII}, Geiss and Schr\"oer extended work of Marsh and Reineke [unpublished] regarding irreducible components of preprojective algebras based on simply-laced Dynkin graphs.  In this scenario, the irreducible components of the $\modlad$ are known to correspond  to the elements of a canonical basis for the negative part of the quantized enveloping algebra of the Lie algebra associated with $Q$ \cite{KaSa}. Geiss and Schr\"oer classified the irreducible components for the tame cases $Q = \AA_5$ and $Q =  \DD_4$.  Beyond that, they obtained an interesting limitation on the number of distinct summands arising in the canonical decomposition of certain components; their bound also applies to the wild preprojective algebras based on the quivers of Dynkin type $\AA_n$ for $n \ge 6$, $\DD_n$ for $n \ge 5$, and $\EE_6$, $\EE_7$, $\EE_8$.  Namely, whenever an irreducible component $\C$ of some $\modlad$ contains a dense orbit represented by a module without self-extensions, the canonical decomposition of $\C$ in the sense of Section \ref{sec4}.A is of the form $\C = \overline{\C_1^{m_1} \oplus \cdots \oplus \C_u^{m_u}}$, where $u$ is bounded from above by the number of positive roots of $Q$.

%%%%%%%%%%%%%%%%%%%%%%%%%%%%%
\section{Projective parametrizing varieties}
\label{sec7}

We describe alternative projective varieties designed to parametrize classes of $\bd$-di\-men\-sion\-al $\la$-modules and explain how they relate to the affine parametrizing varieties in $\modlad$ encoding the same classes of modules.  It is in these projective varieties that the proofs of the theorems of Section \ref{sec8} are anchored.  However, in the present survey, the only explicit applications of the projective parametrizing varieties occur in \ref{sec8}.B, next to brief appearances in Sections \ref{sec8}.C and \ref{sec9}.    

\subsection*{\ref{sec7}.A. The ``small" projective parametrizing varieties $\grasstbd$ and $\grassSS$}

Let $T \in \lamod$ be semisimple, and $P$ a  $\la$-projective cover of $T$.  Moreover, take $\bd$ to be a dimension vector of $\la$ with $\udim T \le \bd$, and set $d = |\bd|$.  By $\Gr(\dim P - d,\, JP)$ we denote the classical Grassmann variety of $(\dim P-  d)$-dimensional $K$-subspaces of $JP$.

\begin{param*} We define $\grasstbd$ to be the subset of $\Gr(\dim P - d, \, JP)$ consisting of those points $C$ which are $\la$-submodules of $JP$ and have the additional property that $\udim P/C = \bd$.  Note that, due to $C \subseteq JP$, the factor modules $P/C$ all have top $T$. (Recall: We identify isomorphic semisimple modules.)

Then $\grasstbd$ is a closed subvariety of $\Gr(\dim P- d, \, JP)$.  In particular, $\grasstbd$ is a projective variety.   Clearly, the map
\begin{align*}
\grasstbd &\longrightarrow  \{\text{iso classes of}\ \la\text{-modules with dim vector} \ \bd\ \text{and top}\ T\}  \\
C &\longmapsto \ \ \text{iso class of}\ P/C
\end{align*}
is surjective and thus parametrizes the $\bd$-dimensional $\la$-modules with top $T$, up to isomorphism.  Moreover,
the natural (morphic) action of the algebraic group $\autlap$ on $\grasstbd$ furnishes a partition of $\grasstbd$ into the subsets corresponding to the different isomorphism classes of the modules under consideration:  Indeed, the $\autlap$-orbits are in $1$-$1$ correspondence with these isomorphism classes.
\end{param*}

To compare this projective parametrization with the corresponding (quasi-) affine one, denote the locally closed subvariety of $\modlad$ consisting of the points $x$ with $\Top(M_x) = T$ by $\Rep^T_\bd$.  Evidently, this subvariety is stable under the $\GL(\bd)$-action of $\modlad$. 

\begin{proposition} \label{prop7.1}  \cite[Proposition C]{unifinIV}  Consider the natural inclusion-preserving and -re\-flec\-ting bijection between the $\GL(\bd)$-stable subsets of $\Rep^T_\bd$ on one hand and the $\autlap$-stable subsets of $\grasstbd$ on the other, which is defined by the requirement that it  pairs orbits encoding  isomorphic modules. This correspondence  preserves and reflects openness, closures, irreducibility, and smoothness. 
\end{proposition}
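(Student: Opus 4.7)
The strategy is to realize the correspondence as coming from a common incidence variety that projects to both $\Rep^T_\bd$ and $\grasstbd$ via principal bundles. With $P = \bigoplus_r \la \bz_r$ the fixed $\la$-projective cover of $T$, define
\[
\bE \;=\; \bigl\{\,(x,f) \,:\, x\in\Rep^T_\bd,\ f\colon P\to M_x\ \text{a surjective $\la$-homomorphism}\,\bigr\},
\]
a locally closed subvariety of $\Rep^T_\bd \times \Hom_K(P, K^d)$. Equip $\bE$ with commuting actions of $\GL(\bd)$ and $\autlap$: the former acts by $g\cdot(x,f) = (g\cdot x,\, g\circ f)$, where $g\cdot x$ is the usual conjugation action and $g$ is regarded as a vector-space automorphism of $K^d$; the latter acts by $\sigma\cdot(x,f) = (x,\, f\circ\sigma^{-1})$. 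The first projection $\pi_1(x,f)=x$ is $\GL(\bd)$-equivariant and $\autlap$-invariant; the second, $\pi_2(x,f)=\Ker f$, takes values in $\grasstbd$ (since any surjection from $P$ onto a module with top $T$ has kernel contained in $JP$) and is $\autlap$-equivariant and $\GL(\bd)$-invariant.

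The first key step is to show that $\pi_1$ is a principal $\autlap$-bundle and $\pi_2$ a principal $\GL(\bd)$-bundle. On each fiber the relevant group acts freely and transitively: for $\pi_1$ this is the standard lifting property of projective covers combined with Nakayama's lemma; for $\pi_2$, the fiber over $C$ is canonically identified with the set of vertex-respecting $K$-linear isomorphisms $P/C \to K^d$ (with the module structure $x$ determined by transport of structure), which is a $\GL(\bd)$-torsor. Zariski-local triviality is then obtained by fixing ordered bases compatible with the vertex decomposition and taking local sections defined by the non-vanishing of suitable determinants. Both structure groups are smooth and connected: this is clear for $\GL(\bd) = \prod_i\GL_{d_i}(K)$, while $\autlap$ is an open subvariety of $\End_\la(P)$, connected because it is an extension of the reductive group $\End_\la(P)^\times/(1+\rad)\cong\prod_i\GL_{n_i}(K)$ by the connected unipotent group $1+\rad\End_\la(P)$.

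Granted these principal bundle structures, the correspondence is $Y\longmapsto\pi_2(\pi_1^{-1}(Y))$ with inverse $Z\longmapsto\pi_1(\pi_2^{-1}(Z))$. These are mutually inverse because $\pi_1^{-1}(Y)$ is automatically $\autlap$-stable whenever $Y$ is $\GL(\bd)$-stable, so $\pi_2^{-1}(\pi_2(\pi_1^{-1}(Y))) = \pi_1^{-1}(Y)$, and dually. At the level of orbits the bijection pairs the $\GL(\bd)$-orbit of $x$ with the $\autlap$-orbit of any $C$ satisfying $P/C\cong M_x$, that is, it pairs orbits encoding isomorphic modules. Each of the four listed properties then transfers because $\pi_1$ and $\pi_2$ are smooth, open, surjective morphisms with irreducible fibers: openness passes via continuity and openness of the $\pi_i$; the identity $\pi_i^{-1}(\overline S) = \overline{\pi_i^{-1}(S)}$ (which follows from $\pi_i$ being continuous, open and surjective) yields preservation of closures; irreducibility ascends and descends because the fibers of each $\pi_i$ are irreducible (a consequence of the connectedness of the structure group); and smoothness transfers because it both ascends and descends along smooth surjections. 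The main obstacle is establishing the principal bundle structure rigorously — in particular, constructing Zariski-local sections of $\pi_2$ compatible with the vertex grading and verifying connectedness of $\autlap$ — after which the transfer of the four topological properties is a formal consequence of standard principal-bundle theory.
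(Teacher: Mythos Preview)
The paper does not prove this proposition; it is quoted from \cite{unifinIV} and stated without argument in the present survey. Your proposal is correct and in fact reproduces the standard approach used in the cited source: one introduces the incidence variety of pairs $(x,f)$ with $x\in\Rep^T_\bd$ and $f\colon P\twoheadrightarrow M_x$ a $\la$-epimorphism, observes that the two projections are principal bundles for $\autlap$ and $\GL(\bd)$ respectively, and then transfers openness, closures, irreducibility, and smoothness via the resulting smooth surjections with connected fibers. The points you flag as requiring care---Zariski-local sections and connectedness of $\autlap$---are exactly the technical ingredients isolated in \cite{unifinIV}, so there is nothing to add.
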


Note in particular: If $\SS$ is a $\bd$-dimensional semisimple sequence with $\SS_0 = T$, then the above correspondence pairs the locally closed subvariety  $\grassSS$ of $\grasstbd$, which consist of the points $C$ with $\SS(P/C) = \SS$, with the previously defined subvariety $\laySS$ of $\modlad$.

\subsection*{\ref{sec7}.B.  The ``big" projective parametrizing varieties $\grassbd$ and $\biggrassSS$}

Given $\bd$, fix a $\la$-projective cover $\bP$ of $\bigoplus_{1 \le i \le n} S_i^{d_i}$.  In other words, $\bP$ is minimal projective relative to the property that all $\la$-modules with dimension vector $\bd$ arise as factor modules of $\bP$.  

\begin{param2*}  
We define $\grassbd$ to be the closed subvariety of $\Gr\bigl( \dim \bP -  d,\,\bP\bigr)$ consisting of those points
$C$ which are $\la$-submodules of $\bP$ and have the additional property that $\udim \bP/C = \bd$.   

 In particular, $\grassbd$ is a projective variety, and the map
\begin{align*}
\grassbd &\longrightarrow  \{\text{iso classes of}\ \la\text{-modules with dim vector} \ \bd\}  \\
C &\longmapsto \ \ \text{iso class of}\ \bP/C
\end{align*}
is surjective.  The role played by $\autlap$ in \ref{sec7}.A is taken over by the larger automorphism group $\Aut_\la(\bP)$ in the broader scenario.
\end{param2*}

In complete analogy with Proposition \ref{prop7.1}, one obtains an inclusion-preserving/re\-flec\-ting bijection between the $\GL(\bd)$-stable subsets of $\modlad$ on one hand and the $\Aut_\la(\bP)$-stable subsets of $\grassbd$ on the other.  In turn, this bijection preserves and reflects openness, closures, irreducibility, and smoothness.  Under this broader correspondence, any subvariety of the form $\laySS$ of $\modlad$ corresponds to the locally closed subvariety $\biggrassSS$ of $\grassbd$ which consists of the points $C$ with $\SS(\bP/C) = \SS$.  Observe, in particular, that $\biggrassSS$ encodes the same isomorphism classes of modules as $\grassSS$, but is significantly larger in general. Since the modules in any irreducible component have generically constant tops, it is therefore advantageous to operate in the smaller setting of \ref{sec7}.A, ahead of final size comparisons of the closures in $\grassbd$ of the components of the various $\biggrassSS$.

%%%%%%%%%%%%%%%%%%%%%%%%%%%%%
\section{The wild case:  Focus on truncated path algebras}
\label{sec8}

In this section, we restrict our attention to truncated path algebras
$$\la = KQ / \langle \text{all paths of length}\ L+1 \rangle.$$
Subsequently (in Section \ref{sec9}), we will sketch and exemplify a strategy to apply information garnered in the truncated case to more general path algebras modulo relations.  In light of the discussion in \ref{sec3}.C, we are confronted with a selection problem raised by the following facts:

\begin{theorem} \label{thm8.1} {\rm\cite[Section 5]{BHT} ($\la$ truncated)} For any realizable semisimple sequence $\SS$, the variety $\laySS$ is irreducible.  Moreover, all irreducible components of $\modlad$ are among the closures $\overline{\laySS}$, where $\SS$ traces the realizable semisimple sequences with dimension vector $\bd$. 
\end{theorem}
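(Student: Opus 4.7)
The second assertion follows directly from the semicontinuity machinery of Section~\ref{sec2}. The radical-layering map $\SS\colon\modlad\to\Seq(\bd)$ is upper semicontinuous by Proposition~\ref{prop2.3} and has finite image, so each stratum $\laySS$ is locally closed and these strata partition $\modlad$ into finitely many pieces. Given an irreducible component $\C$ of $\modlad$, choose a minimal value $\SS$ attained by $x\mapsto\SS(M_x)$ on $\C$; by upper semicontinuity combined with finiteness of the image, $\C\cap\laySS$ is open and hence dense in the irreducible set $\C$. Therefore $\C\subseteq\overline{\laySS}$, and such an $\SS$ must be realizable.

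For the first assertion, I would fix a realizable semisimple sequence $\SS$ with $\udim\SS=\bd$ and $T:=\SS_0$, and pass to the projective incarnation $\grassSS\subseteq\grasstbd$ of Section~\ref{sec7}.A; by Proposition~\ref{prop7.1}, irreducibility of $\grassSS$ is equivalent to that of $\laySS$. A point of $\grassSS$ is a submodule $C\subseteq JP_0$ (with $P_0$ a $\la$-projective cover of $T$) satisfying $\udim P_0/C=\bd$ and $\SS(P_0/C)=\SS$, and the layering condition translates into codimension requirements on the subspaces $(C+J^{l+1}P_0)/J^{l+1}P_0\subseteq J^lP_0/J^{l+1}P_0$. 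Filtering $C$ by $C_l:=C\cap J^lP_0$, and noting that $C_{L+1}=J^{L+1}P_0=0$ because $\la$ is truncated of Loewy length at most $L+1$, the plan is to exhibit $\grassSS$ as an iterated tower built by descending induction on $l$: given $C_{l+1}$, parametrize the possible $C_l$ by the choice of a subspace of $J^lP_0/J^{l+1}P_0$ of the prescribed codimension $\dim\SS_l$, together with a compatible lift in $J^lP_0$, subject to the submodule closure condition $J\cdot C_l\subseteq C_{l+1}$.

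The chief technical obstacle is to verify that this compatibility condition cuts out an \emph{open} (not merely locally closed) subvariety of the natural Grassmannian bundle at each level. This is exactly where the truncated hypothesis is essential: the ideal $I$ of $\la$ is generated by paths of length $L+1$, which vanish automatically in every module under consideration, so no additional algebraic relations survive to carve out lower-dimensional pieces beyond the tautological codimension conditions. Granting this openness, each stage of the tower is an open subbundle of an ordinary Grassmannian bundle, hence irreducible; so $\grassSS$ is an irreducible iterated bundle and the claim follows. An alternative, more combinatorial route uses Theorem~\ref{thm4.6}: for any abstract skeleton $\S$ with layering $\SS$, the parameters $x_{-,-}$ in the presentation of $G=P_0/R(\S)$ define a morphism $\AA^N\to\modlad$ surjecting onto the open subvariety $\layS\subseteq\laySS$ of Observation~\ref{defobs4.4}, so each $\layS$ is irreducible and $\laySS=\bigcup_\S\layS$ is a finite union of irreducible open subvarieties. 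Irreducibility of $\laySS$ then reduces to exhibiting a common point of all the $\layS$ — the natural candidate being the generic module obtained from any one skeleton via algebraically independent scalars, which, thanks again to the absence of relations beyond truncated length, one expects to admit every combinatorially admissible skeleton with layering $\SS$ as a skeleton of itself.
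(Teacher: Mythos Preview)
Your argument for the second assertion is correct and matches the paper's own reasoning in Section~\ref{sec3}.C: the finite partition $\modlad = \bigsqcup_\SS \laySS$ into locally closed pieces, together with upper semicontinuity of the radical-layering map, forces every irreducible component to be the closure of some stratum.

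For the first assertion the paper gives no proof; it cites \cite[Theorem 5.3]{BHT}. From what the paper records elsewhere (Theorem~\ref{thm4.6}, Definition/Theorem~\ref{defthm9.1}, and the examples in Section~\ref{sec9}), the argument in \cite{BHT} is close to your second route: for a truncated path algebra, each affine chart $\grassS$ of the projective variety $\grassSS$ is a \emph{full} affine space $\AA^N$ --- no equations survive, precisely because the only relations in $I$ are paths of length $L+1$, which vanish automatically. The $\grassS$ thus form an irreducible open cover of $\grassSS$, and irreducibility of $\grassSS$ follows once one checks these charts overlap. Your first ``tower of Grassmannian bundles'' idea is a different packaging of essentially the same content, but the gap you flag is genuine and slightly mis-diagnosed: the submodule condition $J\cdot C_l \subseteq C_{l+1}$ is \emph{closed}, not open, so you cannot hope for an open subbundle of a Grassmannian bundle; one must instead identify the fibers directly as affine spaces.

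Two technical points on your second route. First, the map $\AA^N \to \modlad$ you describe does not surject onto $\layS$; the precise statement (Definition/Theorem~\ref{defthm9.1}) is that $\grassS \cong \AA^N$ in the projective picture, and $\grassS$ is strictly smaller than the $\autlap$-stable set corresponding to $\layS$ under Proposition~\ref{prop7.1}. This does not harm irreducibility, since the $\grassS$ still cover $\grassSS$. Second, invoking Theorem~\ref{thm4.6} as stated is circular, since its phrasing (``generic for $\overline{\laySS}$'') already presupposes that $\laySS$ is irreducible. What you actually need is the underlying \emph{construction} $G = P_0/R(\S)$ together with a direct verification that $G$, built with algebraically independent scalars, lies in every chart $\grassS'$ for $\S'$ with layering $\SS$. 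Your ``one expects'' is exactly where the substantive work in \cite{BHT} lies.
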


 Thus, our task is to characterize those sequences $\SS$ for which $\laySS$ is not contained in $\overline{\laySS'}$ for any semisimple sequence $\SS' < \SS$.
In light of the duality $\lamod$ $\leftrightarrow \text{mod-}\la$, the situation is actually symmetric relative to radical and socle layerings.  The choice of placing the emphasis on radical layerings was prompted by the prior development of techniques for modules with fixed top (see Section \ref{sec7}).  

Recall that, for any dimension vector $\bd$ of $Q$, we have a map 
$$\Theta : \modlad \rightarrow \Seq(\bd) \times \Seq(\bd), \ \ x \mapsto (\SS(M_x), \SS^*(M_x)).$$
It will provide the leitmotif for sorting the sequences in $\Seq(\bd)$ according to their component status.   Upper semicontinuity of $\Theta$ (Proposition \ref{prop2.3}) places the primary focus on those $\SS$ which give rise to minimal elements in $\Img(\Theta)$; see \ref{sec2}.B. In particular, we know:  If $(\SS,\SS^*)$ is a minimal pair in $\Img(\Theta)$, then $\overline{\laySS}$ is an irreducible component of $\modlad$.

Clearly, the first entries of the pairs in $\Img(\Theta)$ are precisely the realizable $\bd$-dimensional semisimple sequences.  Due to semicontinuity, we are, moreover, only interested in those pairs $(\SS, - ) \in \Img(\Theta)$ whose second slots are occupied by the \emph{generic} socle layering of the modules in $\laySS$, respectively.  The first subsection is dedicated to making the relevant pairs in the image of $\Theta$ concretely accessible from $Q$ and $L$.

\subsection*{\ref{sec8}.A. Realizability criterion and generic socles}

\begin{criterion} \label{crit8.2} {\rm\cite[Observation 5.2]{BHT}, \cite[Criterion 3.2]{irredcompI} ($\la$ truncated)} Let $\bA$ denote the adjacency matrix of $Q$.  For any semisimple sequence $\SS = (\SS_0, \dots, \SS_L)$ in $\lamod$, the following conditions are equivalent:
\begin{itemize}
\item $\SS$ is realizable.
\item For $0\le l\le L-1$, the sequence $(\SS_l, \SS_{l+1})$ is realizable over $\la/J^2$.
\item $\udim \SS_{l+1} \le \udim \SS_l \cdot \bA$ for $0\le l\le L-1$.
\end{itemize}

\noindent{\rm The final condition permits to decide realizability of $\SS$ at a glance.  Indeed, it says that, for each $l < L$ and $k \in \{1, \dots, n\}$, the dimension $\dim e_k \SS_{l+1}$ is bounded from above by $\sum_{j=1}^n (\dim e_j \SS_l) \cdot |\{\alpha \in Q_1 \mid \strt(\alpha) = e_j \ \text{and}\ \term(\alpha) = e_k\}|$.}  
\end{criterion}

Listing the irreducible components of the variety $\modlad$ will involve comparisons of pairs in the image of $\Theta$ under the componentwise dominance order on the codomain. The execution of this task is rendered much more efficient by the facts that $\bullet$ for any realizable semisimple sequence $\SS$, the unique smallest socle layering attained on the modules in $\laySS$ is the generic one, and $\bullet$ this generic socle layering $\SS^*$ may be computed from $\SS$ by way of the theorem below. We use the notation $E_1(X) = \soc(E(X)/X)$ for any module $X$; here $E(X)$ is the injective envelope of $X$.  For any semisimple $X \in \lamod$, the module $E_1(X)$ equals $\SS_1^*(E(X))$ and is readily gleaned from the quiver $Q$; this is, in fact, dual to the considerations targeting the subfactor $\SS_1(P(X)) = J P(X)/ J^2 P(X)$ of a projective cover $P(X)$ of a semisimple module $X$. 

\begin{theorem} \label{thm8.3} {\rm \cite[Theorem 3.8]{irredcompII} ($\la$ truncated)} Denote by $\bB$ the transpose of the adjacency matrix of $Q$. Let $\SS$ be a realizable semisimple sequence, set $\SS_{L+1} = 0$, and let $\SS^* = (\SS_0^*,\dots,\SS_L^*)$ be the generic socle layering of the modules in $\laySS$.
%\smallskip

{\bf(a)}  The generic socle $\SS_0^*$ of the modules in $\laySS$ is given by its dimension vector 
$$\udim \SS_0^* = 
\sup \biggl\{ \sum_{L - j \le l\le L} \biggl( \udim \SS_l - \udim \SS_{l+1} \cdot \bB \biggr) \biggm| 0 \le j\le L \biggr\}.$$
%\smallskip

{\bf(b)} Generically, the quotients $M/\soc M$ for $M$ in $\laySS$ have radical layering $\SS' = (\SS'_0, \dots, \SS'_{L-1},0)$ where the  vectors $\udim \SS'_l$ are recursively given by $\udim \SS'_L = 0$ and
$$\udim \SS'_{L-m} = \inf \biggl\{ \udim \SS_{L-m}, \,  \biggl(\, \sum_{0\le j\le m-1} \udim E_1(\SS_{L-j}) \biggr) - \biggl(\, \sum_{0\le j\le m-1} \udim \SS'_{L-j} \biggr) \biggr\}$$
for $1\le m\le L$. 

The generic socle layering of the modules in $\laySS'$ is $(\SS^*_1,\dots,\SS^*_L,0)$.
%\smallskip

{\bf(c)} The higher entries of $\SS^*$ are obtained recursively from parts {\bf (a)} and {\bf (b)}.
\end{theorem}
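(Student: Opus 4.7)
The plan is to prove part (a) by matching a rank-nullity lower bound with an explicit generic-module upper bound, to deduce part (b) by applying (a) to each submodule $J^{L-m}M$ and reorganizing via $E_1$, and to observe that (c) is immediate by iteration.

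For part (a), fix $j\in\{0,\dots,L\}$ and any $M\in\laySS$. The $K$-linear map
\[
\mu_j\colon J^{L-j}M\longrightarrow\bigoplus_{\alpha\in Q_1}J^{L-j+1}M,\qquad m\longmapsto(\alpha m)_\alpha,
\]
has kernel $\soc(M)\cap J^{L-j}M$. Restricting to the $e_i$-component and observing that only arrows with $\strt(\alpha)=e_i$ contribute, rank-nullity yields
\[
\dim e_i\soc(M)\ \ge\ \dim e_iJ^{L-j}M\;-\;\sum_{\alpha:\strt(\alpha)=e_i}\dim e_{\term(\alpha)}J^{L-j+1}M.
\]
Expanding both sides as sums over the layers $\SS_l$ with $l\ge L-j$ and identifying the arrow-indexed sum with the $i$-th entry of $\sum_{l\ge L-j}\udim\SS_{l+1}\cdot\bB$ gives, componentwise,
\[
\udim\soc(M)\ \ge\ \sum_{L-j\le l\le L}\bigl(\udim\SS_l-\udim\SS_{l+1}\cdot\bB\bigr).
\]
The choice $j=0$ produces the nonnegative vector $\udim\SS_L$, so the componentwise supremum over $j$ is the desired lower bound. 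For the matching upper bound, invoke the explicit generic module $G=P_0/R(\S)$ of Theorem \ref{thm4.6}: annihilation of an element of $G$ by every arrow translates into a linear system whose coefficient matrices are polynomials in the algebraically independent scalars $x_{-,-}$, and this independence forces maximum rank everywhere, converting all rank-nullity inequalities above into equalities at $G$.

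For part (b), let $M$ be generic in $\laySS$. Intersecting $\soc M$ with the radical filtration yields
\[
\udim\SS'_l=\udim\SS_l-\udim\bigl(\soc M\cap J^lM\bigr)+\udim\bigl(\soc M\cap J^{l+1}M\bigr).
\]
Apply the rank-nullity argument from (a) to the submodule $J^{L-m}M$, whose radical layering is $(\SS_{L-m},\dots,\SS_L)$, to evaluate $\udim\soc M\cap J^{L-m}M$. Employing the truncated-algebra identity $\udim E_1(X)=\udim X\cdot\bB$ for semisimple $X$ (visible from $E_1(S_i)=\bigoplus_{\alpha:\term(\alpha)=e_i}S_{\strt(\alpha)}$), rewrite the resulting sums in terms of the $E_1(\SS_{L-j})$. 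A telescoping reorganization, together with an induction on $m$, converts the formula into
\[
\udim\SS'_{L-m}=\inf\biggl\{\udim\SS_{L-m},\,\Bigl(\sum_{0\le j\le m-1}\udim E_1(\SS_{L-j})\Bigr)-\Bigl(\sum_{0\le j\le m-1}\udim\SS'_{L-j}\Bigr)\biggr\},
\]
the two branches of the infimum corresponding to whether layer $L-m$ provides further socle elements beyond what the deeper layers already supply. The auxiliary claim that the generic socle layering of $\laySS'$ is $(\SS^*_1,\dots,\SS^*_L,0)$ follows from the canonical isomorphism $\soc_{k+1}(M)/\soc_k(M)\cong\soc_k(M/\soc M)/\soc_{k-1}(M/\soc M)$, together with the observation that $M/\soc M$ for generic $M\in\laySS$ lies in a dense open subset of $\laySS'$ and so realizes its generic socle layering.

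Part (c) is then immediate by iteration: having determined $\SS_0^*$ via (a) and $\SS'$ via (b), one obtains $\SS_1^*$ as the generic socle of $\laySS'$ by applying (a) to $\SS'$, and continues in this manner. The main obstacle is the combinatorial verification in (b) that the layer-by-layer rank-nullity bounds telescope precisely into the stated $\inf$-expression involving $E_1$; this is careful bookkeeping rather than a new conceptual ingredient.
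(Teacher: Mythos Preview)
The paper is a survey and does not prove Theorem~\ref{thm8.3}; it merely cites \cite[Theorem 3.8]{irredcompII}. So there is no in-paper proof to compare against. Evaluating your sketch on its own merits, the overall architecture (rank--nullity lower bound, generic module for the matching upper bound, then iteration) is the natural one, but two steps are not yet justified.

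First, in part~(a), your sentence ``algebraic independence forces maximum rank everywhere, converting all rank-nullity inequalities above into equalities at $G$'' cannot be correct as stated. For a fixed vertex $e_i$, the right-hand side of your inequality
\[
\dim e_i\soc(M)\ \ge\ \Bigl[\sum_{L-j\le l\le L}\bigl(\udim\SS_l-\udim\SS_{l+1}\cdot\bB\bigr)\Bigr]_i
\]
genuinely depends on $j$, so it is impossible for equality to hold simultaneously for all $j$. What is actually needed is a direct verification that $\dim e_i\soc(G)$ equals the \emph{maximum} over $j$ of these quantities; this requires analyzing, for each $i$, the specific $j$ at which the supremum is attained and showing both that $e_i\soc(G)\subseteq J^{L-j}G$ and that the map $\mu_j$ has maximal rank there. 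The algebraic-independence heuristic points in the right direction but does not replace this computation.

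Second, in part~(b), the assertion that ``$M/\soc M$ for generic $M\in\laySS$ lies in a dense open subset of $\laySS'$'' is unproved and is exactly the content of the auxiliary claim you are trying to establish. From the identity $\soc_{k+1}(M)/\soc_k(M)\cong\soc_k(M/\soc M)/\soc_{k-1}(M/\soc M)$ you correctly deduce that $\SS^*(M/\soc M)=(\SS^*_1,\dots,\SS^*_L,0)$ for generic $M$; but to conclude that this is the \emph{generic} socle layering on $\laySS'$ you need either that the assignment $M\mapsto M/\soc M$ hits a dense subset of $\laySS'$, or an independent argument bounding the generic socle layering of $\laySS'$ from below by $(\SS^*_1,\dots,\SS^*_L,0)$. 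Neither is supplied. A related issue is your appeal to part~(a) for the submodule $J^{L-m}M$: even if $M$ is generic in $\laySS$, it is not automatic that $J^{L-m}M$ is generic in $\Rep(\SS_{L-m},\dots,\SS_L,0,\dots,0)$, so the equality (as opposed to the inequality) from~(a) does not transfer without further argument.
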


\subsection*{\ref{sec8}.B. The most complete generic picture: $J^2=0$}

Throughout this subsection, we assume $\la = KQ/I$ where $I$ is generated by all paths of length two. We give ample space to this case due to its level of completeness.

First we will classify the irreducible components of $\modlad$.  Then we will use a collection of geometric bridges between the representation theory of $\la$ and that of a stably equivalent hereditary algebra $\lahat$ (as announced in Section \ref{sec3}) to describe further generic properties of the modules in the components.

In the current situation, we may communicate $\Theta$ in clipped form. Namely, in light of $\SS(M) = (M/JM, JM)$ and $\SS^*(M) = (\soc M, M/\soc M)$, it suffices to record $M/JM$ and $\soc M$ to pin down the value of $M$ under $\Theta$. Hence, we may now convey $\Theta$ in the form $x \mapsto ( \Top M_x,  \soc M_x)$, the componentwise dominance order boiling down to componentwise inclusion. In other words, $(T,U) \le (T',U')$ if and only if $T \subseteq T'$ and $U \subseteq U'$.  In our present situation, $\Theta$ detects and separates all irreducible components of $\modlad$. The following result refines this information. In fact, it shows that the partition of $\modlad$ into locally closed subvarieties $\laySS$ is a stratification in this exceptional case, the boundary of any stratum being the union of the strata with larger $\Theta$-values. 

\begin{theorem} \label{thm8.4} {\rm\cite[Theorem 3.6]{BCH}}  $(J^2=0)$ Let $\bd$ be a dimension vector and $\SS$, $\SStilde$ semisimple sequences with dimension vector $\bd$. Then
$$\Rep \SStilde \subseteq \overline{\laySS} \ \ \iff \ \  \SStilde_0 \supseteq \SS_0 \ \text{and} \,\  \SStilde_0^* \supseteq \SS_0^* \, .$$
In particular, the irreducible components of $\modlad$ are precisely those closures $\overline{\laySS}$ for which the pair $(\SS_0, \SS^*_0)$ is minimal in $\Img(\Theta)$. 
\end{theorem}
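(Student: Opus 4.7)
\emph{The implication $(\Rightarrow)$.} By Proposition~\ref{prop2.3}, the maps $x \mapsto \SS(M_x)$ and $x \mapsto \SS^*(M_x)$ are upper semicontinuous on $\modlad$. Since $\laySS$ is locally closed in $\modlad$ and therefore open-dense in the irreducible variety $\overline{\laySS}$ (Theorem~\ref{thm8.1}), the generic (minimal) values of these two maps on $\overline{\laySS}$ are precisely $\SS$ and $\SS^*$. Hence every point of $\overline{\laySS}$---and in particular every point of $\laySStilde \subseteq \overline{\laySS}$---satisfies $\SS(M_x) \ge \SS$ and $\SS^*(M_x) \ge \SS^*$ in the dominance order. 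When $J^2 = 0$, the dominance order on $\bd$-dimensional semisimple sequences collapses to containment of first entries (since $\SS_1$ is forced by $\SS_0$ and $\bd$), yielding $\SStilde_0 \supseteq \SS_0$ and $\SStilde_0^* \supseteq \SS_0^*$.

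\emph{The implication $(\Leftarrow)$.} The plan is to establish the identity $\overline{\laySS} = Z$, where
\[
Z \;:=\; \bigl\{\, x \in \modlad \,:\, \udim \Top M_x \ge \udim \SS_0 \text{ and } \udim \soc M_x \ge \udim \SS_0^* \,\bigr\}.
\]
The inclusion $\overline{\laySS} \subseteq Z$ is automatic: $Z$ is closed in $\modlad$ (each defining inequality is a rank-at-most condition on the ensemble of incoming arrows at a vertex, or on the ensemble of outgoing arrows), and $\laySS \subseteq Z$ by construction. For the reverse inclusion, I would fix an arbitrary $N \in Z$, represented by a matrix tuple $(y_\alpha)$, and construct a morphism $\varphi : \AA^1 \to \modlad$ with $\varphi(0) = N$ and $\varphi(t) \in \laySS$ for all $t \in K^*$. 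I would seek $\varphi$ in the affine-linear form $\varphi(t) = (y_\alpha + t\, u_\alpha)$, with the perturbation $(u_\alpha)$ chosen to (i) raise the rank of the incoming ensemble at each vertex $i$ to the target value $(\udim \SS_1)_i$ by routing an ``excess'' top element of $N$ into a fresh direction inside $e_i N$, and (ii) absorb each excess semisimple summand of $N$ into $JN$ so the socle descends to $\SS_0^*$. Realizability of $\SS$ (Criterion~\ref{crit8.2}) guarantees a sufficient supply of arrows to execute~(i); the structural identity $\soc M = JM \oplus \Sigma(M)$ valid for every $J^2 = 0$ module---where $\Sigma(M)$ is the maximal semisimple direct summand---decouples (i) from~(ii).

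\emph{Main obstacle.} The chief difficulty is that $\modlad$ is cut out of its ambient affine space by the quadratic relations $x_\beta x_\alpha = 0$ for every composable arrow pair $\alpha, \beta$. The perturbed tuple $\varphi(t)$ must continue to satisfy these relations, which forces both $u_\beta u_\alpha = 0$ and the mixed tangent condition $y_\beta u_\alpha + u_\beta y_\alpha = 0$. Reconciling these two constraints with the rank prescriptions (i)--(ii) is the delicate step; I would handle it by arranging each $u_\alpha$ to land in $\Ker(y_\beta)$ for every arrow $\beta$ starting at $t(\alpha)$, and to be supported on the ``excess'' top and semisimple loci of $N$. Both features are automatically compatible with the relations under the $J^2 = 0$ hypothesis, since the images of the new arrows are forced into the future radical, which already lies in $\soc(\varphi(t))$ for generic~$t$. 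A sufficiently generic choice within the resulting affine space of admissible perturbations then achieves the prescribed generic ranks for $t \ne 0$ by upper semicontinuity of rank, placing $\varphi(t) \in \laySS$ on a dense open subset of $\AA^1$ and $N \in \overline{\laySS}$. This proves $Z = \overline{\laySS}$, whence any $\laySStilde \subseteq Z$ satisfies $\laySStilde \subseteq \overline{\laySS}$, completing the theorem.
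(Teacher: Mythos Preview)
The survey does not supply its own proof of Theorem~\ref{thm8.4}; the result is quoted from \cite{BCH}. So there is no in-paper argument to compare against, and I will assess your plan on its own terms.

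Your $(\Rightarrow)$ direction is correct and essentially complete: upper semicontinuity of $\Theta$ (Proposition~\ref{prop2.3}) plus irreducibility of $\overline{\laySS}$ gives the inequalities immediately, and for $J^2=0$ the dominance order indeed reduces to containment of the zeroth entries.

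Your $(\Leftarrow)$ direction has the right target---the identity $\overline{\laySS}=Z$ is exactly what one wants---but the perturbation construction contains a genuine gap. You correctly isolate the three conditions $y_\beta u_\alpha=0$, $u_\beta y_\alpha=0$, and $u_\beta u_\alpha=0$. Your recipe (image of $u_\alpha$ in $\bigcap_\beta\Ker y_\beta=e_j\soc_y N$, source-support of $u_\alpha$ on ``excess'' top and semisimple summands) does force the first two, since the excess loci are disjoint from $JN$ and $\operatorname{Im}y_\alpha\subseteq JN$. It does \emph{not} force the third. The image of $u_\alpha$ lands in $e_j\soc_y N$, and the semisimple-summand portion of $e_j\soc_y N$ is precisely part of the excess locus on which $u_\beta$ is allowed to be nonzero. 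Concretely: if a simple summand $S_j$ of $N$ is to be absorbed into the new radical (made the target of some $u_\alpha$) and simultaneously routed elsewhere to shrink the top (made the source of some $u_\beta$), then $u_\beta u_\alpha\neq 0$ and the deformed tuple leaves $\modlad$. One must therefore \emph{partition} the semisimple summands of $N$ into ``targets'' and ``sources,'' and then argue---using both hypotheses $\Top N\supseteq\SS_0$ and $\soc N\supseteq\SS_0^*$ together with realizability of $\SS$---that a consistent partition exists which still permits the rank of the combined arrow-map into each vertex to reach the prescribed value $(\udim\SS_1)_i$. This counting/matching step is the crux, and your sketch does not address it.

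A route that sidesteps the obstacle is to aim first for the structural description in Proposition~\ref{prop8.5} (every $N\in Z$ decomposes as $X_1\oplus X_2$ with $\Top X_1=\SS_0$ and $X_2$ semisimple) and then produce a one-parameter degeneration from a module in $\laySS$ to $X_1\oplus X_2$ via a short exact sequence, using the standard fact that an extension degenerates to the direct sum of its end terms. Alternatively, the isomorphism $\Phi^T_\bd$ of Proposition~\ref{prop8.6} transports the question to a hereditary algebra, where closures are far more tractable.
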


For $J^2 = 0$, there is a particularly efficient computational route to the irreducible components of the module varieties; see Proposition 3.9 through Example 3.11 in \cite{BCH}. Moreover, the full collection of modules in any given irreducible component is characterized with ease:

\begin{proposition} \label{prop8.5} {\rm\cite[Corollary 4.2]{BCH}} $(J^2=0)$ For any semisimple sequence $\SS$ in $\Seq(\bd)$, the modules in $\overline{\laySS}$ are precisely those $\bd$-dimensional modules which have the form $X_1 \oplus X_2$ with $\Top X_1 = \SS_0$ and $X_2$ semisimple.
\end{proposition}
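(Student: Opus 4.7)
My plan is to leverage Theorem \ref{thm8.4}, which identifies the modules in $\overline{\laySS}$ as exactly those $M$ of dimension vector $\bd$ satisfying $\Top M \supseteq \SS_0$ and $\soc M \supseteq \SS_0^*$ componentwise.  Specializing the formula of Theorem \ref{thm8.3}(a) to $L = 1$ (so $\SS = (\SS_0, \SS_1)$ and $\SS_2 = 0$), the generic socle of $\laySS$ satisfies
$$\udim \SS_0^* \;=\; \sup\bigl\{\udim \SS_1,\; \bd - \udim \SS_1\cdot \bB\bigr\},$$
where $\bB$ is the transpose of the adjacency matrix of $Q$.  In particular, $\SS_0^* \supseteq \SS_1$ componentwise, a fact I would use repeatedly.

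For the implication $(\subseteq)$, the key observation is that $J^2 = 0$ forces $JM \subseteq \soc M$.  Consequently, any semisimple submodule $X_2 \subseteq \soc M$ with $X_2 \cap JM = 0$ is automatically a direct summand of $M$: $X_2$ injects into $\Top M = M/JM$ under the projection $\pi$, so the preimage $X_1 = \pi^{-1}(W)$ of any complement $W$ of $\pi(X_2)$ in $\Top M$ satisfies $M = X_1 \oplus X_2$ with $\Top X_1 = W$ (noting $JX_1 = JM$ as $JX_2 = 0$).  The task is therefore to choose $X_2$ so that $\pi(X_2)$ complements a copy of $\SS_0$ inside $\Top M$; this is possible provided $\udim \Top M - \udim \SS_0 \leq \udim(\soc M/JM)$ componentwise.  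Since $\udim JM = \bd - \udim \Top M$, the inequality reduces to $\udim \soc M \geq \udim \SS_1$, which holds by $\soc M \supseteq \SS_0^* \supseteq \SS_1$.

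For the implication $(\supseteq)$, assume $M = X_1 \oplus X_2$ with $\Top X_1 = \SS_0$ and $X_2$ semisimple.  The top inclusion $\Top M = \SS_0 \oplus X_2 \supseteq \SS_0$ is immediate.  For the socle, $JX_2 = 0$ yields $JM = JX_1$ and $\udim JM = \udim \SS_1 - \udim X_2$; combining $JM \subseteq \soc X_1$ with $X_2 \subseteq \soc M$ gives $\udim \soc M \geq \udim \SS_1$ at once.  The remaining bound $\dim e_i \soc M \geq d_i - (\udim \SS_1 \cdot \bB)_i$ needs a rank estimate on the arrow-multiplication map
$$\Psi_i : e_i\, \Top X_1 \;\longrightarrow\; \bigoplus_{\alpha : e_i \to e_j} e_j \, JX_1,$$
whose kernel computes $e_i(\soc X_1/JX_1)$.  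Its codomain has dimension $(\udim JX_1 \cdot \bB)_i = (\udim \SS_1 \cdot \bB)_i - (\udim X_2 \cdot \bB)_i$, so $\dim \ker \Psi_i \geq \dim e_i \SS_0 - (\udim \SS_1 \cdot \bB)_i + (\udim X_2 \cdot \bB)_i$ whenever this is non-negative.  Summing contributions gives $\dim e_i \soc M \geq \dim e_i \SS_1 + \dim\ker\Psi_i$, which exceeds $d_i - (\udim \SS_1 \cdot \bB)_i$ in both regimes, i.e., whether $\dim e_i \SS_0$ is at most or greater than $(\udim \SS_1 \cdot \bB)_i$.

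I expect the only nontrivial obstacle to be the bookkeeping in the backward socle estimate, specifically the rank argument that lower-bounds $e_i(\soc X_1/JX_1)$ via $\ker \Psi_i$.  Everything else is elementary lifting and dimension counting, made possible by the inclusion $JM \subseteq \soc M$ that is peculiar to the hypothesis $J^2 = 0$.
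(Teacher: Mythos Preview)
The paper itself gives no proof of this proposition --- it is simply cited from \cite{BCH} --- so there is no in-paper argument to compare against. I will therefore evaluate your proof on its own terms.

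Your $(\subseteq)$ direction is sound. Membership in $\overline{\laySS}$ forces $\Top M\supseteq\SS_0$ and $\soc M\supseteq\SS_0^*$ by upper semicontinuity of $\Theta$, and since $\SS_0^*\supseteq\SS_1$ (from the formula) together with $JM\subseteq\soc M$, your lifting argument splits off the required semisimple summand $X_2$ with $\Top X_1\cong\SS_0$.

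Your $(\supseteq)$ direction has a genuine gap, namely in your reading of Theorem~\ref{thm8.4}. That theorem, as stated in the paper, gives a criterion for the \emph{stratum} containment $\Rep\SStilde\subseteq\overline{\laySS}$, phrased in terms of the \emph{generic} socle $\SStilde_0^*$ of $\Rep\SStilde$. It does not assert the module-level criterion you use --- that $M\in\overline{\laySS}$ whenever $\Top M\supseteq\SS_0$ and $\soc M\supseteq\SS_0^*$ --- and these genuinely differ, because $\overline{\laySS}$ is \emph{not} in general a union of the strata $\Rep\SStilde$. Concretely: let $\la$ be the local algebra on two loops with $J^2=0$, $d=3$, $\SS=(S,S^2)$, so $\SS_0^*=S^2$. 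The module $M=U\oplus S$ with $U$ uniserial of length~$2$ satisfies $(\Top M,\soc M)=(S^2,S^2)\ge(\SS_0,\SS_0^*)$ and does lie in $\overline{\laySS}$; yet $M$ belongs to the stratum $\Rep(S^2,S)$, whose generic socle is only $S\not\supseteq S^2$, so Theorem~\ref{thm8.4} does \emph{not} place that stratum inside $\overline{\laySS}$. Your socle estimate for $M=X_1\oplus X_2$ is correct, but it does not by itself yield $M\in\overline{\laySS}$ via Theorem~\ref{thm8.4}.

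The cleanest repair bypasses the socle computation and argues by degeneration. Write $X_1=P/C_1$ with $P$ a projective cover of $\SS_0$ and $C_1\subseteq JP$; then $\udim C_1=\udim\SS_0\cdot\bA-\udim\SS_1+\udim X_2\ge\udim X_2$ by realizability of $\SS$ (an assumption implicit in the proposition: otherwise $\overline{\laySS}=\varnothing$ while $\SS_0\oplus\SS_1$ still meets the right-hand description). Since $C_1\subseteq JP$ is semisimple, choose $C\subseteq C_1$ with $C_1/C\cong X_2$. Then $N:=P/C$ lies in $\laySS$, and the short exact sequence $0\to X_2\to N\to X_1\to 0$ exhibits $M=X_1\oplus X_2$ as a degeneration of $N$, so $M\in\overline{\laySS}$.
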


A major cache of generic information is opened up by the fact that $\la$ is stably equivalent to the following hereditary algebra $\lahat = K\Qhat$. The quiver $\Qhat$ (known as the \emph{separated quiver} of $Q$) has twice as many vertices as $Q$, namely $\Qhat_0 = \{e_1,\dots,e_n, \ehat_1,\dots,\ehat_n \}$. The arrows in $\Qhat$ are of the form $\alphahat$, where $\alpha$ traces $Q_1$ and $\alphahat$ has source $e_i$ and target $\ehat_j$ if $\alpha$ is an arrow from $e_i$ to $e_j$. Note that the hereditary algebra $\lahat$ in turn has vanishing radical square; in fact, the vertices $\ehat_1,\dots,\ehat_n$ are sinks.
Accordingly, the $2n$ simple left $\lahat$-modules may be split into two camps as follows: $\hatS(e_i) = \lahat e_i/ \Jhat e_i$ and $\hatS(\ehat_j) = \lahat \ehat_j$.  Our sequencing of the entries of the dimension vectors of $\Qhat$ follows the ordering of $\Qhat_0$ given above.

For instance, if $\la$ is local, i.e., if $Q$ consists of a single vertex with finitely many loops -- say $r$ loops -- then $\Qhat$ is the generalized Kronecker quiver with two vertices and $r$ equidirected arrows. A particularly complete generic picture of the $\lahat$-modules in this situation can be found in \cite[Section 2.6]{KacI}.

The two-way shift of geometric information $\lamod \longleftrightarrow \lahat$-mod occurs on the level of the ``small" Grassmannian parametrizing varieties $\grasstbd$ for the $\bd$-dimensional $\la$-modules with top $T$ and the analogous varieties $\grass^{\That}_{\bdhat}$, where $\That$ and $\bdhat$ are related to $T$ and $\bd$ as follows.  Given a semisimple $\la$-module $T$ with dimension vector $\bt = (t_1, \dots, t_n) \le \bd$, match it with the semisimple $\lahat$-module $\That$ that has dimension vector $(\bt, \mathbf 0) = (t_1, \dots, t_n, 0, \dots, 0)$. Moreover, pair the dimension vector $\bd$ of $Q$ with the dimension  vector $\bdhat = (\bt, \bd-\bt)$ of $\Qhat$; in other words, the last $n$ entries of $\bdhat$ amount to the dimension vector of $JM$ for any module $M$ in $\grasstbd$.
By $P$ and $\Phat$, we denote a $\la$-projective cover of $T$ and a $\lahat$-projective cover of $\That$, respectively.  According to Section \ref{sec7}, the automorphism groups of these projective modules act on the considered parametrizing varieties $\grass^{\bullet}_{\bullet}$ for $\la$-, resp., $\lahat$-modules, delineating isomorphism classes.  The $\autlap$-action on $\grasstbd$ boils down to an $\Aut_\la(T)$-action, due to the fact that the kernel of the natural map $\autlap \rightarrow \Aut_\la(T)$ acts trivially for $J^2 = 0$; analogously, the $\Aut_{\lahat}(\Phat)$-action on $\grass^{\That}_{\bdhat}$ boils down to an $\Aut_{\lahat}(\That)$-action.  It is readily checked that the obvious bijection $\Aut_\la(T) \rightarrow \Aut_{\lahat}(\That), \ g \mapsto \hat{g}$ is an isomorphism of algebraic groups.  Identifying these groups would allow us to view the following isomorphism of varieties as being equivariant under the relevant $\Aut$-action. 

\begin{proposition} \label{prop8.6} {\rm\cite[Proposition 5.3]{BCH}  $(J^2=0)$}  There is an isomorphism of varieties
$$\Phi^T_\bd:  \grasstbd \longrightarrow \grass^{\That}_{\bdhat}, \ \ C \longmapsto \Chat$$
such that $\Phi^T_\bd(g.C) = \hat{g}.\Chat$ for $g \in \Aut_\la(T)$.
%\smallskip

This isomorphism yields a $1${\rm -}$1$ correspondence between the isomorphism classes of $\bd$-dimensional $\la$-modules with top $T$ on one hand and the isomorphism classes of $\bdhat$-dimen\-sio\-nal $\lahat$-modules with top $\That$ on the other, namely $M = P/C \mapsto \Mhat = \Phat/\Chat$.  The correspondence preserves and reflects direct sum decompositions in the following strong sense: 
 $M$ is a direct sum of submodules $M_r$ with dimension vectors $\bd^{(r)}$ for $1 \le r \le s$ precisely when $\Mhat$ is a direct sum of  submodules $\Mhat_r$ with dimension vectors $\bdhat^{(r)}$. $($Here $\bdhat^{(r)} = (\udim \Top M_r,\, \bd^{(r)}- \udim \Top M_r)$ and $\bd^{(r)} = (\bdhat_1^{(r)} + \bdhat_{n+1}^{(r)}, \dots, \bdhat_n^{(r)} + \bdhat_{2n}^{(r)})$.$)$ Moreover,  the direct summands $M_r$ of $M$ are indecomposable if and only if the same holds for the corresponding direct summands $\Mhat_r$ of $\Mhat$.
 
 Taken together, the correspondences $\Phi^T_\bd$ linking $\la$-modules to $\lahat$-modules for the various dimension vectors and tops induce bijections between the submodule lattices of the partners of any pair $(M, \Mhat)$.
 \end{proposition}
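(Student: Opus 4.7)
Plan: The strategy is to construct $\Phi^T_\bd$ from a canonical $K$-linear identification of $JP$ with $\Jhat\Phat$ compatible with the vertex decompositions, and then to exploit the fact that $J^2 = 0$ and $\Jhat^2 = 0$ force every $K$-subspace of $JP$ (resp.\ $\Jhat\Phat$) to be a submodule of $P$ (resp.\ $\Phat$). Concretely, I would fix matched $\la$- and $\lahat$-projective covers $P = \bigoplus_r \la \bz_r$ and $\Phat = \bigoplus_r \lahat \widehat{\bz}_r$, with both $\bz_r$ and $\widehat{\bz}_r$ normed by the same primitive idempotent $e_{i_r}$. Since $J^2 = 0$, the radical $JP$ has $K$-basis $\{\alpha \bz_r \mid \alpha \in Q_1,\ \strt(\alpha) = e_{i_r}\}$; by the construction of $\Qhat$, $\Jhat\Phat$ has $K$-basis $\{\alphahat \widehat{\bz}_r\}$ indexed in exactly the same way. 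Define $\phi \colon JP \to \Jhat\Phat$ to be the $K$-linear map sending $\alpha \bz_r$ to $\alphahat \widehat{\bz}_r$, and note that $\phi$ carries $e_j JP$ bijectively onto $\ehat_j \Jhat\Phat$ for each $j$, since both bases are indexed by the arrows with target $e_j$ in $Q$.

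Next, set $\Phi^T_\bd(C) := \phi(C)$. Every $K$-subspace of $JP$ is automatically a $\la$-submodule of $P$ (because $J \cdot JP = 0$), and analogously on the hatted side. A dimension check at each vertex -- trivial at $e_i$, where $e_i \Jhat\Phat = 0$ gives $\dim e_i \Phat/\Chat = t_i$, while at $\ehat_j$ we get $\dim \ehat_j \Phat/\Chat = \dim e_j JP - \dim e_j C = d_j - t_j$ -- shows that $\Phi^T_\bd(C) \in \grass^{\That}_{\bdhat}$. Since $\phi$ is a linear isomorphism, it induces an isomorphism of the ambient Grassmannians, and $\Phi^T_\bd$ is an isomorphism of varieties with inverse induced by $\phi^{-1}$. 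For the equivariance, I would observe that the kernel of $\Aut_\la(P) \twoheadrightarrow \Aut_\la(T)$ acts trivially on $JP$ (any $\tau$ in that kernel has $\tau(\bz_r) - \bz_r \in JP$, which is then killed by any arrow); both groups reduce to $\prod_i \GL_{t_i}(K)$, and they act on the bases of $JP$ and $\Jhat\Phat$ by the same formulas, so $\phi(g.C) = \hat g.\phi(C)$. Combined with Proposition \ref{prop7.1} and its $\lahat$-analog, this yields the claimed bijection of iso classes $M \leftrightarrow \Mhat$.

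For the remaining claims, a decomposition $M = \bigoplus_r M_r$ with tops $T_r$ induces $P = \bigoplus_r P_r$ and $JP = \bigoplus_r JP_r$, and (via $J^2 = 0$) forces $C = \bigoplus_r C_r$ with $C_r \subseteq JP_r$. Applying $\phi$ block-wise gives $\Chat = \bigoplus_r \phi(C_r)$, hence $\Mhat = \bigoplus_r \Mhat_r$ with $\Mhat_r = \Phat_r/\phi(C_r)$; the asserted dimension-vector formulas follow from the vertex-by-vertex count. The converse is symmetric, so indecomposability is inherited on both sides. For the submodule-lattice bijection, I would parametrize submodules of $M = P/C$ by pairs $(U, V)$, where $U \subseteq T$ is a semisimple submodule and $V \subseteq JP$ satisfies $V \supseteq C + J\tilde U$ for some (equivalently, any) lift $\tilde U \subseteq P$ of $U$ -- the choice of lift being immaterial since $J^2 = 0$. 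An identical description applies to submodules of $\Mhat$, and $\phi$ matches the two sets of compatibility conditions.

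The main obstacle will be this last submodule-lattice bijection: the pair-parametrization above is straightforward for modules with vanishing radical square, but checking that $\phi$ translates the $\la$-side condition $V \supseteq J\tilde U$ precisely to its $\lahat$-side counterpart requires careful bookkeeping of how $\phi$ interacts with top elements and their products with arrows. Everything else is essentially routine once $\phi$ and its equivariance are in place.
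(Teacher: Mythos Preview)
The paper does not prove this proposition; it is quoted from \cite[Proposition 5.3]{BCH}. Your construction of $\Phi^T_\bd$ via the $K$-linear isomorphism $\phi\colon JP \to \Jhat\Phat$, $\alpha\bz_r \mapsto \alphahat\widehat{\bz}_r$, is correct and is the natural (and presumably the original) approach: since $J^2 = 0$ and $\Jhat^2 = 0$, every $K$-subspace of $JP$ (resp.\ $\Jhat\Phat$) is a submodule, $\phi$ respects the vertex gradings, and the dimension-vector bookkeeping works out exactly as you indicate. Your arguments for equivariance and for preservation of direct-sum decompositions are likewise sound.

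The gap is in your treatment of the submodule-lattice claim. Your $(U,V)$-parametrization is not a bijection. Take $Q$ a single loop $\alpha$ at one vertex, $\la = K[\alpha]/(\alpha^2)$, $P = \la\bz_1 \oplus \la\bz_2$, $C = 0$, $M = P$. Then
\[
N \;=\; K(\bz_1 + \alpha\bz_2)\ \oplus\ K\alpha\bz_1
\qquad\text{and}\qquad
N' \;=\; \la\bz_1 \;=\; K\bz_1 \oplus K\alpha\bz_1
\]
are distinct $\la$-submodules of $M$ (note $\bz_1\notin N$), yet both have $U = K\,[\bz_1] \subseteq T$ and $V = K\alpha\bz_1 \subseteq JP$. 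So pairs $(U,V)$ do not separate submodules. In fact, counting shows that $M$ has a two-parameter family of $2$-dimensional submodules, whereas $\Mhat$ (over the Kronecker quiver $\Qhat$) has only a one-parameter family; thus the literal submodule \emph{lattices} of $M$ and $\Mhat$ are not isomorphic in general. The statement should be read in the weaker sense suggested by its application in Theorem~\ref{thm8.7}(e): the \emph{isomorphism types} (equivalently, dimension vectors) of submodules of $M$ are in bijection with those of $\Mhat$, via the various $\Phi^{T'}_{\bd'}$. Your outline does not establish this either, but it is easily obtained once you drop the $(U,V)$ description and instead argue directly that $N\subseteq M$ implies the existence of some embedding $\Nhat \hookrightarrow \Mhat$ (and conversely), using that any inclusion lifts to compatible projective covers and hence to compatible points in the Grassmannians. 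You were right to flag this last part as the main obstacle.
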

 
 As announced, this permits to transfer all of the generic information on the irreducible components of the varieties $\modlad$ to generic information on the irreducible components of $\Rep_{\bdhat}(\lahat)$, and vice versa.  We phrase the key points somewhat loosely.   

\begin{theorem} \label{thm8.7} {\rm\cite[Theorem 5.6]{BCH}}  $(J^2=0)$ Suppose $\C$ is an irreducible component of $\modlad$ with generic top $T$.  Moreover, let $\bdhat = (\udim T,\, \bd-\udim T)$ and $\That$ be as above.  Then:
%\smallskip

{\bf (a)}  $\That$ is the generic top of the modules in $\Rep_{\bdhat}(\lahat)$.
%\smallskip

{\bf (b)}  If $\bdhat = \bdhat^{(1)} + \cdots + \bdhat^{(s)}$ is the Kac  decomposition $($= canonical decomposition$)$ of $\, \bdhat$, then the Kac decomposition of $\bd$ relative to the component $\C$ is $\bd = \bd^{(1)} + \cdots + \bd^{(s)}$, where $\bd^{(r)} = (\bdhat_1^{(r)} + \bdhat_{n+1}^{(r)}, \dots, \bdhat_n^{(r)} + \bdhat_{2n}^{(r)})$.
%\smallskip

{\bf (c)} The modules in $\C$ are generically indecomposable if and only if $\bdhat$ is a Schur root of the quiver $\Qhat$.   This, in turn, is equivalent to the condition that, generically, the modules $M$ in $\C$ satisfy $\End_\la(M) / \Hom_\la(M,JM) \cong K$.  
%\smallskip

{\bf (d)} $\C$ contains a dense orbit if and only if $\Rep_{\bdhat}(\lahat)$ does. {\rm (Recall from Theorem \ref{thm5.3} that, in testing the condition for $\lahat$, one may use the Euler form.)}
%\smallskip

{\bf (e)}  The vectors generically arising as dimension vectors of $\lahat$-submodules of objects in $\Rep_{\bdhat}(\lahat)$ are in one-to-one correspondence with the vectors generically arising as dimension vectors of $\la$-submodules of objects in $\C$.  More precisely: ${\mathbf \uhat} = (u_1,\dots,u_{2n})$ is attained on the submodule lattice of a generic module for $\Rep_{\bdhat}(\lahat)$ if and only if ${\mathbf u} = (u_1+u_{n+1}, \dots, u_n+ u_{2n})$ is attained on the submodule lattice of a generic module for $\C$.
\end{theorem}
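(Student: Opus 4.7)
The plan is to chain the correspondences of Propositions \ref{prop7.1} and \ref{prop8.6} so as to transport each generic assertion about $\C \subseteq \modlad$ to the parallel assertion about a subvariety of $\Rep_{\bdhat}(\lahat)$, and then to harvest the stated generic facts from the hereditary side together with one explicit endomorphism-ring calculation. Concretely, I would set $U := \C \cap \Rep^T_{\bd}$ (a dense open subset of $\C$, since $T$ is the generic top), apply Proposition \ref{prop7.1} to pass to an irreducible $\autlap$-stable constructible subset $\hat U \subseteq \grasstbd$, use Proposition \ref{prop8.6} to carry $\hat U$ via the variety isomorphism $\Phi^T_{\bd}$ onto an irreducible subset of $\grass^{\That}_{\bdhat}$, and finally apply Proposition \ref{prop7.1} to $\lahat$ to produce an irreducible $\GL(\bdhat)$-stable subset $V \subseteq \Rep^{\That}_{\bdhat}(\lahat) \subseteq \Rep_{\bdhat}(\lahat)$. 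Every link in the chain preserves openness, closures, irreducibility, smoothness, and the bijection between orbits and isomorphism classes.

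For \textbf{(a)}, I would compute the generic top of $\bdhat$-dimensional $\lahat$-modules directly. Since $\lahat$ is hereditary and each $\ehat_j$ is a sink of $\Qhat$, the generic top at $\ehat_j$ has dimension $\max\{0,\;\bdhat_{\ehat_j} - \sum_{\alpha: e_i \to e_j \,\text{in}\, Q} \bdhat_{e_i}\}$, while at each source $e_i$ it is just $\bdhat_{e_i}$. Substituting $\bdhat = (\udim T,\, \bd - \udim T)$ and invoking Criterion \ref{crit8.2} for the sequence $(T,\, \bd - T)$ (realizable because $\C$ generically attains it) makes every such max vanish, confirming $\That$ as the generic top. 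Part \textbf{(d)} then follows by chasing orbits: a dense $\GL(\bd)$-orbit in $\C$ corresponds, via the orbit-preserving bijections in the chain, to a dense orbit in $V$, which by \textbf{(a)} is dense in $\Rep_{\bdhat}(\lahat)$; the converse is symmetric.

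For \textbf{(b)} and \textbf{(e)}, I appeal directly to Proposition \ref{prop8.6}. Its preservation of direct-sum decompositions with prescribed dimension vectors, together with indecomposability of summands, converts the canonical decomposition of $\bdhat$ -- which governs the generic decomposition in the irreducible $\Rep_{\bdhat}(\lahat)$ -- into the Kac decomposition of $\bd$ relative to $\C$ via the transform $\bd^{(r)} = (\bdhat^{(r)}_1 + \bdhat^{(r)}_{n+1},\ldots,\bdhat^{(r)}_n + \bdhat^{(r)}_{2n})$, giving \textbf{(b)}. Likewise, the closing submodule-lattice statement of Proposition \ref{prop8.6} transports generic dimension vectors of submodules across $M \leftrightarrow \Mhat$ under the same coordinate-summing transform, yielding \textbf{(e)}.

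The crux of the argument is \textbf{(c)}, which I would attack by establishing the natural algebra isomorphism
$$\End_\la(M)\big/\Hom_\la(M,JM) \;\cong\; \End_{\lahat}(\Mhat)$$
whenever $M \leftrightarrow \Mhat$ under Proposition \ref{prop8.6}. Fixing a $K$-linear splitting $M = T \oplus JM$, an $f \in \End_\la(M)$ decomposes into four block components; $\la$-linearity together with $J^2 = 0$ forces the $JM \to T$ block to vanish, pins down the $JM \to JM$ block by the identity $f_{11}(\alpha t) = \alpha f_{00}(t)$ for $\alpha \in Q_1$ and $t \in T$, and leaves the $T \to JM$ block free; that free block constitutes exactly $\Hom_\la(M, JM)$. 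The surviving constrained pair $(f_{00}, f_{11})$ is precisely the data of an $\lahat$-endomorphism of $\Mhat$, because the displayed commutation relation is the same as commutation with the separated arrows $\widehat{\alpha}$ of $\Qhat$. Combining this isomorphism with Kac's Schur root criterion (Theorem \ref{thm5.1}) applied to the hereditary $\lahat$ and with the preservation of indecomposability from Proposition \ref{prop8.6} delivers the chain of equivalences in \textbf{(c)}. The principal obstacle I anticipate lies in this endomorphism-ring identification: verifying that $f_{11}(\alpha t) = \alpha f_{00}(t)$ is well-defined independently of how an element of $JM$ is written as a sum of terms $\alpha t$, and that this constraint exactly exhausts the compatibility required of a $\lahat$-endomorphism across all separated arrows.
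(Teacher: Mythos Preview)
The paper does not supply a proof of Theorem \ref{thm8.7}; it merely records the statement with a citation to \cite[Theorem 5.6]{BCH} and positions it as the payoff of Proposition \ref{prop8.6}. Your proposal follows precisely the route the paper's exposition sets up: chain Proposition \ref{prop7.1}, the variety isomorphism $\Phi^T_\bd$ of Proposition \ref{prop8.6}, and Proposition \ref{prop7.1} again on the $\lahat$-side, then read off (a)--(e) from the hereditary theory of Section \ref{sec5}. That is the intended argument, and your execution of parts (a), (b), (d), (e) is sound; in particular your use of Criterion \ref{crit8.2} to force the $\ehat_j$-components of the generic top to vanish is exactly right, and once (a) is in hand the subset $V$ you build is all of $\Rep^{\That}_{\bdhat}(\lahat)$, hence dense in $\Rep_{\bdhat}(\lahat)$.

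One small clarification on (c): the obstacle you flag is not a genuine one. In your block decomposition, $f_{JJ}$ is part of the \emph{given} data of $f \in \End_\la(M)$, not something you are attempting to \emph{define} by the rule $f_{JJ}(\alpha t) = \alpha f_{TT}(t)$. The equation is a constraint that $\la$-linearity imposes on the pair $(f_{TT}, f_{JJ})$, and there is no well-definedness issue to resolve. What you do need (and you have it implicitly) is that $JM$ is spanned over $K$ by the elements $\alpha t$, so that this family of constraints pins $f_{JJ}$ down uniquely once $f_{TT}$ is fixed; this is what makes the quotient map $\End_\la(M) \to \End_{\lahat}(\Mhat)$, $f \mapsto (f_{TT}, f_{JJ})$, injective modulo $\Hom_\la(M,JM)$. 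Surjectivity is the direction where one might worry about consistency, but on the $\lahat$-side the pair $(g_{TT}, g_{JJ})$ already comes equipped with the compatibility $g_{JJ}(\alphahat t) = \alphahat g_{TT}(t)$, and under $\Phi^T_\bd$ this is literally the same condition. So the isomorphism $\End_\la(M)/\Hom_\la(M,JM) \cong \End_{\lahat}(\Mhat)$ goes through cleanly.
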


\subsection*{\ref{sec8}.C.  Local algebras }

In this subsection we assume $\la$ to be a local truncated path algebra, meaning that the quiver $Q$ has the form 
$$\xymatrixrowsep{0.75pc}\xymatrixcolsep{0.3pc}
\xymatrix{
 & \ar@{{}{*}}@/^1pc/[dd] \\
1 \ar@'{@+{[0,0]+(-6,-6)} @+{[0,0]+(-15,0)} @+{[0,0]+(-6,6)}}^(0.7){\alpha_1}
\ar@'{@+{[0,0]+(-6,6)} @+{[0,0]+(0,15)} @+{[0,0]+(6,6)}}^(0.7){\alpha_2}
\ar@'{@+{[0,0]+(6,-6)} @+{[0,0]+(0,-15)} @+{[0,0]+(-6,-6)}}^(0.3){\alpha_r}  \\
 &
}$$
for some positive integer $r$.  Clearly, $\dim J/J^2 = r$, and dimension vectors $\bd$ amount to $K$-di\-men\-sions $d$ in this situation.  For $r = 1$,  $\la$ is a truncated polynomial ring in a single variable, and all of the varieties $\modlasd$ are trivially irreducible.  Otherwise, we find:

\begin{theorem} \label{thm8.8}  {\rm\cite[Main Theorem]{irredcompI} ($\la$ local truncated)}
Assume $\la$ has Loewy length $L+1$ and $Q$ has $r \ge 2$ loops.  Let $d$ be a positive integer.  
%\smallskip

{\rm\bf(I)} If $d > L+1$ and $\SS$ is a $d$-dimensional semisimple sequence, the following  conditions are equivalent:
\begin{enumerate}
\item The closure $\overline{\laySS}$ is an irreducible component of $\modlasd$.
%\smallskip
\item[(1$'$)] The closure $\, \overline{\biggrass\, \SS}$ is  an irreducible component of $\biggrass_d(\la)$.
%\smallskip
\item[(2)]   $\dim \SS_l \le r \cdot \dim \SS_{l-1}$ and $\ \dim \SS_{l-1} \le r \cdot \dim \SS_l\,$ for $\, l \in \{1, \dots, L\}$.
%\smallskip
\item[(3)] $\laySS \ne \varnothing$, and $(\SS_L, \SS_{L-1}, \dots, \SS_0)$ is the generic socle layering of the modules in $\laySS$.
%\smallskip
\item[(4)]   $\SS = \SS(M)$ for some minimal pair $\bigl(\SS(M), \SS^*(M) \bigr)$  in $\Img(\Theta)$.
\end{enumerate}
%\smallskip

\noindent Therefore, the irreducible components of $\modlasd$ are precisely the $\overline{\laySS}$ where $\SS$ traces the $\bd$-dimensional semisimple sequences satisfying $(1)-(4)$.
%\medskip

{\rm\bf (II)} If, on the other hand, $d \le L+1$, the variety $\modlasd$ is irreducible and, generically, its modules are uniserial. In this situation, conditions {\rm (1), (1$'$)} and {\rm (4)} are equivalent.
\end{theorem}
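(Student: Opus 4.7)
The proof naturally splits into: (1)$\Leftrightarrow$(1'), (2)$\Leftrightarrow$(3), (3)$\Leftrightarrow$(4)$\Leftrightarrow$(1), and finally Part (II). The first equivalence is formal from Section~\ref{sec7}.B: the bijection there between $\GL(d)$-stable subsets of $\modlasd$ and $\Aut_\la(\mathbf{P})$-stable subsets of $\biggrass_d(\la)$ preserves closures and irreducibility and pairs $\laySS$ with $\biggrass\,\SS$, so (1) and (1') are equivalent; I would work in $\modlasd$ throughout.

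For (2)$\Leftrightarrow$(3), observe that the inequality $\dim\SS_l\le r\dim\SS_{l-1}$ is Criterion~\ref{crit8.2} specialized to the local case (scalar adjacency matrix $r$), so it is equivalent to $\laySS\ne\varnothing$. Substituting $\bB=[r]$ into the generic-socle formula of Theorem~\ref{thm8.3}(a) yields
\[
\dim\SS^*_0 \;=\; \sup_{0\le j\le L}\Bigl(\dim\SS_{L-j}-(r-1)\sum_{l=L-j+1}^L\dim\SS_l\Bigr),
\]
whose $j=0$ summand equals $\dim\SS_L$. Inducting on $L$ via the recursive descriptions in Theorem~\ref{thm8.3}(b)--(c), one verifies that the full generic socle layering of $\laySS$ equals the reverse $(\SS_L,\ldots,\SS_0)$ precisely when, at each step of the recursion, the second inequality in (2) holds (which simultaneously guarantees that the first inequality is inherited by the auxiliary sequence $\SS'$). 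The converse direction is read off the same recursion.

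For (3)$\Leftrightarrow$(4)$\Leftrightarrow$(1), the decisive structural fact is the inclusion $J^{L-l}M\subseteq\soc_l(M)$ for every $M\in\lamod$ (since $J^{L+1}=0$), giving
\[
\sum_{j\le l}\dim\SS^*_j(M)\;\ge\;\sum_{j\ge L-l}\dim\SS_j(M)\qquad(0\le l\le L),
\]
so $\SS^*(M)$ always dominates the reverse of $\SS(M)$. Under (3) this bound is saturated at the generic modules of $\laySS$. For (3)$\Rightarrow$(4): any pair $(\SS',\SS^{*\prime})\le(\SS,\SS^*)$ in $\Img(\Theta)$ must satisfy $\SS^{*\prime}\ge$~reverse of~$\SS'$ (by the displayed inequality) and $\SS^{*\prime}\le\SS^*=$~reverse of~$\SS$ (by assumption), while $\SS'\le\SS$ is equivalent to reverse of~$\SS'\ge$~reverse of~$\SS$; chaining collapses the four inequalities to equalities, forcing $\SS'=\SS$ and $\SS^{*\prime}=\SS^*$. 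For (4)$\Rightarrow$(1), Observation~\ref{obs2.4} renders $\Theta^{-1}\{(\SS,\SS^*)\}$ open in $\modlasd$, contained in $\laySS$ and dense there by Theorem~\ref{thm8.1}, so $\overline{\laySS}$ is a component. For the return (1)$\Rightarrow$(2), I would argue the contrapositive: if the second inequality of (2) fails at some $l$, construct a realizable $\SS'''<\SS$ by shifting mass from $\SS_{l-1}$ down to $\SS_l$ (using the available slack $\dim\SS_{l-1}-r\dim\SS_l>0$), then exhibit a one-parameter family of modules whose generic member has radical layering $\SS'''$ and whose special fiber lies in $\laySS$; this places $\laySS\subseteq\overline{\Rep\SS'''}$ and contradicts the maximality of $\overline{\laySS}$.

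For Part (II), $d\le L+1$: the uniserial sequence $\SS^u=(S,\ldots,S,0,\ldots,0)$ with $d$ leading copies of $S$ is realizable, and its cumulative dimensions $\sum_{j\le l}\dim\SS^u_j=\min(l+1,d)$ are minimum among all realizable $d$-dimensional semisimple sequences (every $d$-dimensional module $M$ satisfies $\dim M/J^{l+1}M\ge\min(l+1,d)$, attained by uniserials). Consequently $\modlasd=\overline{\Rep\SS^u}$ is irreducible with uniserial generic modules (Theorem~\ref{thm8.1}), and the equivalences (1)$\Leftrightarrow$(1')$\Leftrightarrow$(4) follow as in Steps 1 and 3. \textbf{The main obstacle} is the degeneration step in (1)$\Rightarrow$(2) of Part (I): explicitly producing a flat one-parameter family that realizes the mass shift $\SS\to\SS'''$ while remaining in $\modlasd$. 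This is where both the local hypothesis -- which collapses dimension vectors to scalars and activates the self-dual form of (2) -- and the projective parametrizing varieties of Section~\ref{sec7} are indispensable for carrying out the construction cleanly.
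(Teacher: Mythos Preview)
The survey does not itself prove Theorem~\ref{thm8.8}; it is quoted from \cite{irredcompI}, with only the remark that the original arguments are anchored in the projective parametrizing varieties of Section~\ref{sec7}. Your outline of (1)$\Leftrightarrow$(1$'$) via Section~\ref{sec7}.B, of (2)$\Leftrightarrow$(3) via Criterion~\ref{crit8.2} and Theorem~\ref{thm8.3}, and of (3)$\Rightarrow$(4)$\Rightarrow$(1) via the universal bound $\SS^*(M)\ge(\SS_L(M),\dots,\SS_0(M))$ together with Observation~\ref{obs2.4} and Theorem~\ref{thm8.1}, is sound.

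The genuine gap is in your contrapositive for (1)$\Rightarrow$(2). Exhibiting \emph{one} one-parameter family with generic fiber in $\Rep\SS'''$ and special fiber in $\laySS$ places only \emph{that single point} into $\overline{\Rep\SS'''}$; it does not give $\laySS\subseteq\overline{\Rep\SS'''}$. Since $\laySS$ is irreducible, what you need is that the \emph{generic} module $G$ for $\laySS$ lies in $\overline{\Rep\SS'''}=\Filt\SS'''$ (Theorem~\ref{thmdef8.15}(1)), i.e., that $G$ itself admits a filtration governed by $\SS'''$. This can in fact be arranged: if $\dim\SS_{l-1}>r\dim\SS_l$, the $r$ arrow-induced maps $J^{l-1}G/J^lG\to J^lG/J^{l+1}G$ have a common kernel, producing $v\in J^{l-1}G\setminus J^lG$ with $Jv\subseteq J^{l+1}G$, and inserting $J^lG+\la v$ into the radical filtration gives the desired chain --- but realizability of the resulting $\SS'''$ still needs a short case analysis (the boundary $\dim\SS_l=0$, $\dim\SS_{l-1}=1$ forces you to iterate the shift). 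This is exactly where \cite{irredcompI} passes to the Grassmannian side. A parallel slip occurs in Part~(II): minimality of $\SS^u$ among realizable sequences does not by itself yield $\modlasd=\overline{\Rep\SS^u}$; you need the extra observation that any composition series of a $d$-dimensional module is, for $d\le L{+}1$, a filtration governed by $\SS^u$, whence $\modlasd=\Filt\SS^u=\overline{\Rep\SS^u}$ again by Theorem~\ref{thmdef8.15}(1).
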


For $L = 1$ and $r = 2$, the irreducible components of the varieties $\modlasd$ had previously been determined by Donald and Flanigan \cite{DoFl}, as well as by Morrison \cite{Mor}.  For arbitrary choices of $r$, the case $L=1$ was covered in \cite[Theorem 3.12]{BCH}.

Condition (2) of the theorem permits us to list  --  without any computational effort  -- the irreducible components of $\modlasd$, tagged by their generic radical layerings.  Note moreover, that for local truncated path algebras, Theorem \ref{thm8.3} (which provides the generic socle layering for an irreducible component $\overline{\laySS}$) is superseded by the far simpler description given in condition (2) of Theorem \ref{thm8.8}.   Further generic properties of the modules in the components are derived in \cite[Section 4]{irredcompI}.

We conclude the discussion of the local case with an illustration.

\begin{example} \label{ex8.9} \cite[Example 4.5]{irredcompI}  Let $\la$ be the local truncated path algebra with $r = 3 = L+1$, and $d = 10$.  Then $\modlasd$ has precisely 17 irreducible components; one readily finds the eligible generic radical layerings via criterion (2) of Theorem \ref{thm8.8}. These layerings are displayed below, the numbers of bullets indicating the dimensions of the layers.
$$\xymatrixrowsep{0.1pc}\xymatrixcolsep{0.001pc}
\xymatrix{
 &&\mbull &&& &&&& &&\dbull &\dbull && &&&& &\mbull &\mbull && &&&& &\dbull &\dbull & &&&& &\mbull &\mbull && &&&& &&\dbull &\dbull  \\
 &\mbull &\mbull &\mbull && &&&& &&\dbull &\dbull && &&&& &\dbull &\dbull &\dbull & &&&&\dbull &\dbull &\dbull &\dbull &&&&\dbull &\dbull &\dbull &\dbull &\dbull &&&&\dbull &\dbull &\dbull &\dbull &\dbull &\dbull  \\
\dbull &\dbull &\dbull &\dbull &\dbull &\dbull &&&&\dbull &\dbull &\dbull &\dbull &\dbull &\dbull &&&&\dbull &\dbull &\dbull &\dbull &\dbull &&&&\dbull &\dbull &\dbull &\dbull &&&& &\dbull &\dbull &\dbull & &&&& &&\dbull &\dbull  \\  \\
&&& &\dbull &\dbull &\dbull & &&&&\mbull &\mbull &\mbull & &&&&\mbull &\mbull &\mbull & &&&& &\dbull &\dbull &\dbull & &&&&\dbull &\dbull &\dbull &\dbull &&&&\dbull &\dbull &\dbull &\dbull  \\
&&& &\mbull &\mbull && &&&&\mbull &\mbull &\mbull & &&&&\dbull &\dbull &\dbull &\dbull &&&&\dbull &\dbull &\dbull &\dbull &\dbull &&&& &\dbull &\dbull & &&&&\mbull &\mbull &\mbull &  \\
&&& \dbull &\dbull &\dbull &\dbull &\dbull &&&&\dbull &\dbull &\dbull &\dbull &&&&\mbull &\mbull &\mbull & &&&& &\mbull &\mbull && &&&&\dbull &\dbull &\dbull &\dbull &&&&\mbull &\mbull &\mbull &  \\  \\
&&&& \dbull &\dbull &\dbull &\dbull &&&&\dbull &\dbull &\dbull &\dbull &\dbull &&&&\dbull &\dbull &\dbull &\dbull &\dbull &&&&\dbull &\dbull &\dbull &\dbull &\dbull &\dbull &&&&\dbull &\dbull &\dbull &\dbull &\dbull &\dbull  \\
&&&& \dbull &\dbull &\dbull &\dbull &&&& &\dbull &\dbull &\dbull & &&&& &\mbull &\mbull && &&&& &&\dbull &\dbull && &&&& &\mbull &\mbull &\mbull &  \\
&&&& &\dbull &\dbull & &&&& &\mbull &\mbull && &&&& &\dbull &\dbull &\dbull & &&&& &&\dbull &\dbull && &&&& &&\mbull &
}$$
 All of the components parametrize generically indecomposable modules and contain infinitely many $\GL_d$-orbits of maximal dimension in this example; see \cite[Corollary 4.3]{irredcompI}  
 for methods to check this.  
 \end{example}

 \subsection*{\ref{sec8}.D. Algebras based on acyclic quivers }

We continue to let $\la$ stand for a truncated path algebra with $J^{L+1} = 0$, but now we assume its quiver $Q$ to be acyclic.  Once again, the minimal values attained by the upper semicontinuous map $\Theta: \modlad \rightarrow \Seq(\bd) \times \Seq(\bd)$ are in bijective correspondence with the irreducible components of $\modlad$, via $(\SS, -) \mapsto \overline{\laySS}$.  But this time, we do not know of any shortcut to bypass comparisons of pairs $(\SS, \SS^*)$ to compile the list of sequences $\SS$ that give rise to the components of $\modlad$.  Our formulation of Theorem \ref{thm8.10} underlines the applicability of the satellite result, Theorem \ref{thm8.3}, to economize the sorting process.

\begin{theorem} \label{thm8.10} {\rm\cite[Main Theorem]{irredcompII} ($\la$ truncated, $Q$ acyclic)} Let  $\SS$ be a $\bd$-dimen\-sion\-al semisimple sequence and $\SS^*$ the generic socle layering of the modules in $\laySS$.  Then $\overline{\laySS}$ is an irreducible component of $\modlad$ if and only if $(\SS, \SS^*)$ is a minimal element of the set 
$$\{(\SShat, \SShat^*) \mid \SShat \in \Seq(\bd) \ \text{is realizable},\ \text{and}\ \ \SShat^*\  \text{is the generic socle layering of}\   \Rep\SShat\}.$$

The situation is symmetric in $\SS$ and $\SS^*$:  Whenever $(\SS, \SS^*)$ is a minimal element in the image of the detection map $\Theta$, then $\SS^*$ is the generic socle layering of ${\laySS}$, and $\SS$ is the generic radical layering of the modules with socle layering $\SS^*$.
\end{theorem}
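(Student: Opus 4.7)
The plan is to combine Theorem \ref{thm8.1} (every component of $\modlad$ is of the form $\overline{\Rep\SShat}$ for some realizable $\SShat$) with the upper semicontinuity of the detection map $\Theta$ (Proposition \ref{prop2.3} and Observation \ref{obs2.4}). Call a pair $(\SShat, \SShat^*)$ a \emph{generic pair} when $\SShat \in \Seq(\bd)$ is realizable and $\SShat^*$ is the generic socle layering on $\Rep\SShat$; these pairs form the set whose minimal elements the theorem isolates. Since $\Rep\SShat$ is irreducible by Theorem \ref{thm8.1}, Observation \ref{obs2.4} identifies the generic pair of $\overline{\Rep\SShat}$ with the minimum value of $\Theta$ on that set, so that $\SShat \mapsto \overline{\Rep\SShat}$ is an injection from generic pairs to irreducible closed subvarieties of $\modlad$, and this injection is order-reversing in one direction by upper semicontinuity.

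First I would handle the direction ``minimal implies component''. If $(\SS, \SS^*)$ is minimal among generic pairs but $\overline{\laySS}$ fails to be a component, Theorem \ref{thm8.1} produces a realizable $\SShat$ with $\overline{\Rep\SShat} \supsetneq \overline{\laySS}$. Applying upper semicontinuity to $\Theta$ on the irreducible set $\overline{\Rep\SShat}$, its generic value $(\SShat, \SShat^*)$ is a lower bound for $\Theta$ throughout this set; evaluation at a generic point of the non-empty subset $\laySS \subseteq \overline{\Rep\SShat}$ then gives $(\SShat, \SShat^*) \le (\SS, \SS^*)$. Equality would force $\SShat = \SS$ and hence $\overline{\Rep\SShat} = \overline{\laySS}$, a contradiction. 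Thus $(\SShat, \SShat^*) < (\SS, \SS^*)$, contradicting minimality of $(\SS, \SS^*)$.

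The converse direction ``component implies minimal'' is the main obstacle. The plan is to establish the reverse of the order-reversal supplied by semicontinuity, namely the following key claim: \emph{if $(\SS_1, \SS_1^*)$ and $(\SS_2, \SS_2^*)$ are generic pairs with $(\SS_1, \SS_1^*) \le (\SS_2, \SS_2^*)$ componentwise, then $\overline{\Rep\SS_1} \supseteq \overline{\Rep\SS_2}$}. Applied to a hypothetical $(\SShat, \SShat^*) < (\SS, \SS^*)$, this yields $\overline{\Rep\SShat} \supsetneq \overline{\laySS}$, which contradicts the maximality of the component $\overline{\laySS}$. The truncated acyclic hypothesis is essential here: one must construct an explicit degeneration from a generic module in $\Rep\SS_2$ to a module in $\Rep\SS_1$ whenever the layering pair decreases. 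My intended tools are the parametrization of $\la$-modules by skeleta in a fixed projective cover (Section \ref{sec4}.B and Theorem \ref{thm4.6}), together with the socle recursion of Theorem \ref{thm8.3}, which forces $\SShat^*$ to depend rigidly on $\SShat$. The strategy is to reduce to a one-step comparison (two pairs differing minimally, say by moving one copy of a simple down exactly one layer), to rig up a one-parameter family of modules interpolating between the two skeleta, and to exploit acyclicity of $Q$ to control the behaviour of this family with respect to both the radical and the socle filtrations.

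Finally, for the symmetry assertion: if $(\SS, \SS^*)$ is minimal in $\Img(\Theta)$, the generic value of $\Theta$ on $\laySS$ is some $(\SS, \SS^*_{\mathrm{gen}}) \in \Img(\Theta)$ with $\SS^*_{\mathrm{gen}} \le \SS^*$; minimality forces $\SS^*_{\mathrm{gen}} = \SS^*$, so $\SS^*$ is generic on $\laySS$. The dual statement --- that $\SS$ is the generic radical layering of the locus of modules with socle layering $\SS^*$ --- follows by running the entire argument over the opposite algebra $\la^{\mathrm{op}}$, which is again truncated with acyclic quiver, after interchanging the roles of radical and socle layerings throughout.
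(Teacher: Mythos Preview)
The survey does not prove Theorem \ref{thm8.10}; it is quoted from \cite{irredcompII} with only the surrounding commentary, so there is no in-paper argument to compare against. I therefore assess your outline on its own.

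Your handling of the direction ``minimal $\Rightarrow$ component'' is correct and is exactly the general semicontinuity argument recorded after Proposition \ref{prop2.3}: any pair minimal in $\Img(\Theta)$ already forces component status, without using acyclicity. Your symmetry paragraph is also fine; the passage to $\la^{\mathrm{op}}$ is legitimate because truncatedness and acyclicity survive.

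The genuine gap is the converse. You correctly isolate the \emph{key claim} --- that for generic pairs, $(\SS_1,\SS_1^*)\le(\SS_2,\SS_2^*)$ implies $\overline{\Rep\SS_1}\supseteq\overline{\Rep\SS_2}$ --- and you are right that this is the entire content of the theorem (compare the $J^2=0$ analogue, Theorem \ref{thm8.4}). But your proposed attack on it is not a proof, only a wish list: ``reduce to a one-step comparison'', ``rig up a one-parameter family'', ``exploit acyclicity''. None of these steps is carried out, and the difficulty is precisely there. A one-step move in the dominance order on $\SS$ need not come with a compatible one-step move on the generic socle layering $\SS^*$; Theorem \ref{thm8.3} computes $\SS^*$ from $\SS$, but it does not tell you that the assignment $\SS\mapsto\SS^*$ is monotone, nor that a decrease in $(\SS,\SS^*)$ can be realized by a sequence of elementary degenerations. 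Building the interpolating family of modules and verifying that its limit lands in $\Rep\SS_1$ (rather than in some $\Rep\SS'$ with $\SS'$ incomparable) is exactly where the acyclic hypothesis and the machinery of \cite{irredcompII} are used, and your outline gives no indication of how to do this. As written, the proposal reduces the theorem to a statement of equal strength and then stops.
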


\begin{kqmod*}
The information on truncated path algebras of acyclic quivers supplements the theory available for $KQ$, filling in generic data on the $\bd$-dimensional $\la$-modules of any fixed Loewy length as follows.  The generic radical layering $\SS$ of $\Rep_{\bd}(KQ)$ is directly available from $Q$ (see Proposition \ref{prop5.4}).  Suppose $L(\bd) := \max\{l\le L \mid \SS_l \ne 0\}$.  Since the set of those points in $\Rep_{\bd}(KQ)$ which encode modules of Loewy length $L(\bd) + 1$ is dense open in $\Rep_{\bd}(KQ)$, the reach of the Kac/Schofield results is limited to the $\bd$-dimensional modules of this Loewy length.  Excising this open subvariety leaves us with a copy of the variety $\Rep_\bd\bigl(KQ / \langle \text{the paths of length}%\allowmathbreak
\ L(\bd)  \rangle \bigr)$; it has a dense open subset which parametrizes the $\bd$-dimensional $KQ$-modules with the reduced Loewy length $L(\bd)$.  Clearly, iterated excision of subvarieties of modules not annihilated by all paths of some fixed length thus supplies generic information about the $\bd$-dimensional $KQ$ modules of any Loewy length $m < L(\bd)$ by way of Theorems \ref{thm8.10}, \ref{thm4.6}, and the followup results proved in \cite[Section 3]{irredcompII}.
\end{kqmod*}   

\begin{example} \label{ex8.11} \cite[Example 5.1]{irredcompII} Let $\la_L = \CC Q/ \langle \text{the paths of length}\ L+1 \rangle$, where $Q$ is the quiver
$$\xymatrixrowsep{1pc}\xymatrixcolsep{3pc}
\xymatrix{
1 \ar[r]^{\alpha_1} \ar@/_1pc/[rr]_{\beta_1}  &2 \ar[r]^{\alpha_2} \ar@/^1.5pc/[rr]^{\beta_2}  &3 \ar[r]^{\alpha_3} \ar@/_1pc/[rr]_{\beta_3}   &4 \ar[r]^{\alpha_4} \ar@/^1.5pc/[rr]^{\beta_4}   &5 \ar[r]^{\alpha_5} \ar@/_1pc/[rr]_{\beta_5}   &6 \ar[r]^{\alpha_6}  &7
}$$
\noindent and $\bd = (1, 1, \dots, 1) \in \NN^7$.   Note that $K_0 = \QQ$ in this example.

$\bullet$ If $L= 6$, we have $\la_L = KQ$, whence $\Rep_{\bd}(\la_L)$ is irreducible.  In this case, the $\bd$-dimensional modules are generically uniserial with radical layering $(S_1, \dots, S_7)$. 

$\bullet$ For $L = 5$, Theorem \ref{thm8.10} shows the variety $\rep_{\bd}(\la_L)$ to have precisely $6$ irreducible components, all of them representing generically indecomposable modules.  They can be listed in terms of their generic modules which, by Theorem \ref{thm4.6}, are available from the generic radical layerings.  Graphs of these generic modules are displayed in Figure \ref{ex8.11}(a) below.
\begin{figure}[ht]
$$\xymatrixrowsep{0.7pc}\xymatrixcolsep{0.1pc}
\xymatrix{
1 \edge[dr] &&2 \edge[dl]\edge@/^1pc/[ddl]  &&&&& &1 \edge[d] \edge@/_1pc/[dd] &  &&&&& &1 \edge[d] \edge@/_1pc/[dd] &  &&&&& &1 \edge[d] \edge@/^1pc/[dd] &  &&&&& &1 \edge[d] \edge@/_1pc/[ddl] &  &&&&& &1 \edge[dl] \edge[dr]  \\
&3 \edge[d] \edge@/_1pc/[dd] &  &&&&& &2 \edge[d] \edge@/^1pc/[dd] &  &&&&& &2 \edge[d] \edge@/^1pc/[dd] &  &&&&& &2 \edge[d] \edge@/_1pc/[ddl] &  &&&&& &2 \edge[dl] \edge[dr] &  &&&&&2 \edge[dr] &&3 \edge[dl] \edge@/^1pc/[ddl]  \\
&4 \edge[d] \edge@/^1pc/[dd] &  &&&&& &3 \edge[d] \edge@/_1pc/[dd] &  &&&&& &3 \edge[d] \edge@/_1pc/[ddl] &  &&&&& &3 \edge[dl] \edge[dr] &  &&&&&3 \edge[dr] &&4 \edge[dl] \edge@/^1pc/[ddl]  &&&&& &4 \edge[d] \edge@/_1.25pc/[dd]  \\
&5 \edge[d] \edge@/_1pc/[dd] &  &&&&& &4 \edge[d] \edge@/_1pc/[ddl] &  &&&&& &4 \edge[dl] \edge[dr] &  &&&&&4 \edge[dr] &&5 \edge[dl] \edge@/^1pc/[ddl]  &&&&&  &5 \edge[d] \edge@/_1pc/[dd] &  &&&&& &5 \edge[d] \edge@/^1pc/[dd]  \\
&6 \edge[d] &  &&&&& &5 \edge[dl] \edge[dr] &  &&&&&5 \edge[dr] &&6 \edge[dl]  &&&&& &6 \edge[d] &  &&&&&  &6 \edge[d] &  &&&&& &6 \edge[d]  \\
&7 &  &&&&&6 &&7  &&&&& &7 &  && &&& &7 &  &&&&&  &7 &  &&&&& &7
}$$
\centerline{Figure \ref{ex8.11}(a)}
\end{figure}

Concerning the leftmost graph: The corresponding irreducible component of $\Mod_\bd(\la_5)$ equals the closure of $\laySS$, where $\SS = (S_1\oplus S_2, \, S_3, \, S_4, \, S_5, \, S_6, \, S_7)$. Generically, the modules in this component are of the form $G = (\la z_1 \oplus \la z_2) / C$, where $z_i = e_i z_i$ for $i=1,2$ and $C$ is the $\la$-submodule generated by $\alpha_1 z_1$, $\alpha_2 z_2 - x_1 \beta_1 z_1$, $\beta_2 z_2 - x_2 \alpha_3 \beta_1 z_1$, $\beta_3 \beta_1 z_1 - x_3 \alpha_4 \alpha_3 \beta_1 z_1$, $\beta_4 \alpha_3 \beta_1 z_1 - x_4 \alpha_5 \alpha_4 \alpha_3 \beta_1 z_1$, $\beta_5 \beta_3 \beta_1 z_1 - x_5 \alpha_6 \alpha_5 \alpha_4 \alpha_3 \beta_1 z_1$; here any choice of scalars $x_i \in \CC$ which are algebraically independent over $\QQ$ is permissible. 

$\bullet$ The case $L = 3$ is more interesting.  Using Theorem \ref{thm8.10}, one  finds that the variety $\Rep_\bd(\la_3)$ has precisely $28$ irreducible components. Of these, $12$ encode generically indecomposable modules; the modules in the remaining $16$ split into two indecomposable summands, generically.  The dimensions of the moduli spaces classifying the modules with the respective generic radical layerings (existent by \cite[Theorem 4.4]{classifying}) vary among $1$, $2$, $3$ for the different components.  In particular, none of the components contains a dense orbit.

We display $9$ of the components of $\Rep_\bd(\la_3)$ in Figure \ref{ex8.11}(b) below, again in terms of graphs of their generic modules. 
\begin{figure}[ht]
$$\xymatrixrowsep{0.5pc}\xymatrixcolsep{0.05pc}
\xymatrix{
\txt{(A)}  &&1 \edge[dl] \edge[dr]  & &&&&&\txt{(B)} &1 \edge[d] &&2 \edge[d] \edge[dll]  &&&&&\txt{(C)} &&1 \edge[dl] \edge[dr] &&&7 \drbl  \\
&2 \edge[d] \edge@/_1pc/[dd] &&3 \edge[d] \edge[dll]  &&&&& &3 \edge[dr] &&4 \edge[dl]  &&&&& &2 \edge[dr] &&3 \edge[dl] \edge@/^1pc/[ddll] &\bigoplus  \\
&4 \edge[d] &&5 \edge[d] \edge[dll]  &&&&& &&5 \edge[dl] \edge[dr]  & &&&&& &&4 \edge[dl] \edge[dr]  \\
&6 &&7  &&&&& &6 &&7  &&&&& &5 &&6  \\
1 \edge[d] &\txt{\hphantom{5}}&3 \edge[d] \edge[ddll]  &&&&&&1 \edge[d] &&&3 \edge[dl] \edge[dr]  & &&&&&&1 \edge[d] \edge@/_1pc/[dd] &&4 \edge[d] \edge@/^1pc/[dd]  \\
2 \edge[d] &&5 \edge[d] \edge@/^1pc/[dd]  &&&&&&2 &\bigoplus &4 \edge[dr] &&5 \edge[dl] \edge@/^/[ddl]  &&&&&&2 \edge[d] &\bigoplus &5 \edge[d] \edge@/_1pc/[dd]  \\
4 &&6 \edge[d] &&&&&& &&&6 \edge[d]  & &&&&&&3 &&6 \edge[d]  \\
&&7  &&&&&& &&&7  & &&&&&& &&7  \\
&1 \edge[dl] \edge[dr] &&4 \edge@/^/[dddl]  &&&&&&1 \edge[d] \edge@/_1pc/[dd] &&&5 \edge[d] \edge@/_0.4pc/[dddl]  &&&&&&1 \edge[dr] &&2 \edge[dl] \edge@/_1pc/[ddll] &6 \edge@/^/[dddl]  \\
2  &&3 \edge[dl] & &&&&&&2 \edge[d] \edge[dr] &&&7  &&&&&& &3 \edge[dl] \edge[dr]  \\
&5 \edge[dl] \edge[dr] && &&&&&&3 &4 \edge[dr] && &&&&&&4 &&5 \edge[d]  \\
7 &&6 & &&&&&& &&6 && &&&&& &&7
}$$
\centerline{Figure \ref{ex8.11}(b)}
\end{figure}
Let $\C_A$, $\C_B$, $\C_C$ denote the components whose generic modules have the graphs labeled (A), (B), (C), respectively.  Note that the generic radical layering of $\C_A$ is strictly smaller than that of $\C_B$, while the socle layerings are in reverse relation.  The generic socle layering of $\C_A$ is strictly smaller than that of $\C_C$, but the generic radical layerings of $\C_A$ and $\C_C$ are not comparable.  
\end{example}

While the process of comparing pairs $(\SS,\SS^*)$ may be streamlined in light of Theorem \ref{thm8.3}, for significantly larger examples, the need for bookkeeping will call for a computer program.

\subsection*{\ref{sec8}.E.  The general truncated case }

We now \emph{waive all conditions on $Q$}, but retain the hypothesis that $\la$ be truncated.  In general, the map $\Theta$ then fails to detect all irreducible components of the varieties $\modlad$.  The lowest Loewy length for this to occur is $4$; see \cite{BCHII}.

\begin{example} \label{ex8.12} \cite[Example 4.8]{irredcompI}  Let $\la = KQ / \langle \text{all paths of length\ } 4 \rangle$ and $\bd = (1,1,1,1)$, where $Q$ is the quiver 
$$\xymatrixrowsep{1pc}\xymatrixcolsep{2pc}
\xymatrix{
1 \ar[r]^{\alpha} &2 \ar@/^/[r]^{\beta} &3 \ar@/^/[l]^{\delta} \ar[r]^{\gamma} &4
}$$
Consider $\SS  =  (S_1, S_2, S_3, S_4)$ and $\SStilde  = (S_1 \oplus S_3, S_2 \oplus S_4, 0 , 0)$.  The varieties $\laySS$ and $\Mod \SStilde$ have generic modules $G$ and $\Gtilde$ determined by the the graphs displayed below. 
$$\xymatrixrowsep{1.5pc}\xymatrixcolsep{2pc}
\xymatrix{
1 \edge[d]_{\alpha}   &&1 \edge[d]_{\alpha} &3 \edge[d]^{\gamma}  \\
2 \edge[d]_{\beta}  &&2 \edge[ur]^(0.6){\delta} &4  \\
3 \edge[d]_{\gamma}  \\
4
}$$
In particular, we find that the generic socle layerings of $\laySS$ and $\Mod\SStilde$ are
$$\SS^* =  \SS^*(G) = (S_4, S_3, S_2, S_1) \ \ \ \text{and}\ \ \ \SStilde^* = \SS^*(\Gtilde) =   (S_2 \oplus S_4, S_1 \oplus S_3, 0, 0),$$
respectively, which shows $(\SS, \SS^*) < (\SStilde, \SStilde^*)$. It is also clear that $(\SS,\SS^*)$ is a minimal element of $\Img(\Theta)$, whence $\overline{\laySS}$ is an irreducible component of $\modlad$. On the other hand, $\delta \cdot G = 0$ while $\delta \cdot \Gtilde \ne 0$, whence the corresponding generic triples 
$$\bigl(\SS(G), \SS^*(G), \nullity_\delta(G) \bigr) \ \ \  \text{and} \ \ \ \bigl(\SS(\Gtilde), \SS^*(\Gtilde), \nullity_\delta(\Gtilde) \bigr)$$ 
are not comparable; here $\nullity_\delta X = \dim \ann_X (\delta)$.  In fact, the expanded map 
$$x \longmapsto (\SS(M_x), \SS^*(M_x), \nullity_\delta M_x)$$ 
achieves a minimal value on $\Mod \SStilde$, from which it follows that $\overline{\Mod\SStilde}$ is another irreducible component of $\modlad$.  As we saw, this component is not detected by the map $\Theta$ alone. For a more methodical treatment of Example \ref{ex8.12}, we will briefly revisit it at the end of the section.
\end{example}

The nullity argument we used in Example \ref{ex8.12} is very limited in scope; the same is true for tests combining $\Theta$ with general families of annihilator dimensions; see \cite[Example 6.1(b)]{GHZ}.  To amend the situation, we introduce a novel upper semicontinuous map, $\Gamma: \modlad \longrightarrow \NN$, which does not have any blind spots; that is, it always detects the generic radical layerings of the irreducible components when $\la$ is truncated.   The generic value of $\Gamma$ on any $\laySS$ is still algorithmically accessible from $Q$ and $L$ (see \ref{rem8.14}(1) below).  But the computations required are more labor-intensive than those called for by the $\Theta$-test.  Thus, compiling a list of the irreducible components of $\modlad$ is typically expedited by first locating the minimal values of $\Theta$; they always give rise to a subset of the set of irreducible components of $\modlad$, to be supplemented to the full collection by means of $\Gamma$ (see \ref{def8.16} for strategy).

Submodule filtrations of $\la$-modules continue to play the key role, but now we include filtrations beyond the radical and socle filtrations in order to probe $\modlad$ more thoroughly.  The next definition does not rely on the assumption that $\la$ is truncated. 

\begin{definition} \label{def8.13}  Let $\SS = (\SS_0, \dots, \SS_L)$ be any semisimple sequence with $\udim \SS = \bd$, and let $M \in \lamod$.  
%\smallskip

$\bullet$ A filtration $M = M_0 \supseteq M_1 \supseteq \cdots \supseteq M_L \supseteq M_{L+1} = 0$ is said to be \emph{governed by} $\SS$ if $M_l / M_{l+1} = \SS_l$ for all $l$.  (Recall that we identify isomorphic semisimple modules.)
%\smallskip

$\bullet$ $\Filt \SS  : = \{ x \in \modlad \mid \ \text{there exists a filtration of}\ M_x \ \text{governed by}\ \SS\}.$
\end{definition}

For our present purpose, we are only interested in filtrations governed by \emph{realizable} semisimple sequences.  (Criterion 8.1 may be used to list them.)  In general, there may be numerous non-realizable semisimple sequences governing filtrations of a module $M$.  

\begin{remarks} \label{rem8.14}  Recall that the present section is headed by the blanket hypothesis that $\la$ be truncated.  However, in the following comments, this assumption is only required where emphasized.  Let $\SS$ be a semisimple sequence with $\udim \SS = \bd$, and $M \in \lamod$.
%\smallskip

{\bf (1)}  There is an alternative description of $\Filt \SS$, which permits to decide, for any point $x \in \modlad$, whether $x \in \Filt \SS$.  In case $\la$ is truncated, the decision relies exclusively on a similarity test with regard to the matrices in the entries of $x$ (see \cite[Lemma and Definition 3.6, and Section 5.B]{GHZ}). 
% \smallskip

{\bf (2)}   $\Filt \SS$ is always nonempty.  Indeed, the module $\bigoplus_{0 \le l \le L}\SS_l$ clearly has a filtration governed by $\SS$.  
%\smallskip

{\bf (3)}  The radical filtration $M = M_0 \supseteq JM \supseteq \cdots \supseteq J^{L+1}M = 0$ is the only filtration of $M$ governed by $\SS(M)$.  
%\smallskip

{\bf (4)}  If $\SS$ governs a filtration of $M$, then $\SS \le \SS(M)$.
%\smallskip

{\bf (5)} In Section \ref{sec9}, we will find that the sets $\Filt \SS$ are closed for any finite dimensional algebra $\la$.  In the present situation, closedness of these sets is part of the much stronger Theorem \ref{thmdef8.15}.  Moreover, we will see that part (2) of \ref{thmdef8.15} carries over to the general non-truncated case, but parts (1), (3) do not.  
\end{remarks}

\begin{theorem-definition} \label{thmdef8.15}  {\rm\cite[Theorems 4.3 and C]{GHZ} ($\la$ truncated)}
%\smallskip

{\bf (1)}  If $\SS$ is a realizable semisimple sequence, then
$$\overline{\laySS} = \Filt \SS.$$
%\smallskip

\noindent For $M \in \lamod$, 
let $\Gamma(M)$ be the number of those \emph{realizable} semisimple sequences which govern some filtration of $M$, and define 
$$\Gabull: \modlad \longrightarrow \NN, \ \ \ x \longmapsto \Gamma(M_x).$$
%\smallskip   

{\bf (2)}  The map $\Gabull$ is upper semicontinuous.  In particular, it is generically constant on the irreducible components of $\modlad$.
%\smallskip

{\bf (3)} For a realizable semisimple sequence $\SS$ with $\udim \SS = \bd$, the following conditions are equivalent:
\begin{itemize}
\item $\overline{\laySS}$ is an irreducible component of $\modlad$.
\item $1 \in \Gabull(\laySS)$.
\item There exists a module $G$ in $\laySS$ with the property that the radical filtration $G \supseteq$ $JG \supseteq \cdots \supseteq J^{L+1} G$ is the only submodule filtration of $G$ which is governed by a realizable semisimple sequence.
\end{itemize}
\end{theorem-definition}

\begin{algorcomm} \label{def8.16} Clearly $1 \in \Gabull(\laySS)$ if and only if any generic module $G$ for $\laySS$ satisfies $\Gamma(G) = 1$.  One derives an effective algorithm for deciding whether the equivalent conditions of \ref{thmdef8.15}(3) are satisfied for $\SS$; see Remark \ref{rem8.14}(1) and \cite[Section 5.B]{GHZ}.  In the positive case, we call the sequence $\SS$ ``rigid".  However, in establishing the list of all rigid sequences with a given dimension vector $\bd$, exclusive reliance on the map $\Gabull$ is inefficient.  The following strategy takes advantage of the fact that $\overline{\Rep \SStilde} \subsetneqq \Filt \SS$ implies $\SS < \SStilde$, and also $\SS^* < \SStilde^*$ for the respective generic socle layerings. (The latter inequality is due to the fact that generic radical and socle layerings of the $\overline{\laySS}$ determine each other for truncated $\la$.)

Start by locating the set $\M_1$ of minimal pairs in $\Img(\Theta)$. The set $\A_1$ of first entries of the pairs in $\M_1$ then consists of rigid sequences; set $\B_1:= \varnothing$.  Next find the set $\M_2$ of minimal pairs in $\Img(\Theta) \setminus \M_1$. The above comment cuts down on the number of comparisons required to determine whether a given $\Rep \SStilde$ with $(\SStilde, \SStilde^*) \in \M_2$ is contained in $\Filt \SS$ for some $\SS \in \A_1$.  All those $\SStilde$ for which the answer is negative are assembled in a set $\A_2$, those for which the answer is positive are assembled in a set $\B_2$; clearly, the sequences in $\A_2$ are rigid, while those in $\B_2$ are not.  Now let $\M_3$ be the set of minimal pairs in $\Img(\Theta) \setminus \bigl(\M_1 \cup  \M_2 \bigr)$, and use the same remark to economize on the number of comparisons necessary to decide whether a given $\Rep \SStilde$ for a pair $(\SStilde, \SStilde^*) \in \M_3$ belongs to some $\Filt \SS$ with $\SS \in \A_1 \cup \A_2$.  A negative answer will lead to $\SStilde \in \A_3$, a positive answer to $\SStilde \in \B_3$.  Proceed inductively. 
\end{algorcomm}

\begin{example} \label{ex8.17}   Let $\la_0$ be the truncated path algebra associated with the algebra $\la$ of Example \ref{ex2.9}.  In particular, $\la_0$ has Loewy length $4$. For $\bd = (2,2)$, the variety $\Rep_\bd(\la_0)$ has precisely $5$ irreducible components.  Their generic modules are graphed in Figure \ref{ex8.17} below.  
\begin{figure}[ht]
$$\xymatrixrowsep{2.25pc} \xymatrixcolsep{0.9pc}
\xymatrix{
&1 \edge@/_/[dl]_{\alpha_2} \edge[d]^{\alpha_1} & &&&& &2 \edge@/_/[dl]_{\beta_2} \edge[d]^(0.4){\beta_1} \edge@/^1pc/[dr]^{\beta_3} & &&&& &&&1 \edge[dll]_{\alpha_2} \edge[drr]^{\alpha_1}   \\
2 &2 \save[0,0]+(0,3);[0,0]+(0,3) **\crv{~*=<2.5pt>{.} [0,0]+(3,3) &[0,0]+(3,0) &[0,0]+(3,-3) &[0,0]+(0,-3) &[0,-1]+(0,-3) &[0,-1]+(-3,-3) &[2,-1]+(-3,3) &[2,-1]+(0,3) &[2,0]+(0,3) &[2,0]+(3,3) &[2,0]+(3,0) &[2,0]+(3,-3) &[2,0]+(0,-3) &[2,-1]+(0,-3) &[2,-1]+(-5,-3) &[2,-1]+(-5,0) &[0,-1]+(-5,0) &[0,-1]+(-5,3) &[0,-1]+(0,3)} \restore
  \edge@/_1pc/[d]_(0.6){\beta_2} \edge[d]^(0.6){\beta_1} \edge@/^2pc/[d]^{\beta_3} & &&&&1  &1 \save[0,0]+(0,3);[0,0]+(0,3) **\crv{~*=<2.5pt>{.} [0,0]+(3,3) &[0,1]+(0,3) &[0,2]+(0,3) &[0,2]+(3,3) &[0,2]+(3,0) &[2,2]+(3,0) &[2,2]+(3,-3) &[2,2]+(0,-3) &[2,0]+(0,-3) &[2,0]+(-3,-3) &[2,0]+(-3,0) &[2,0]+(-3,3) &[2,0]+(0,3) &[2,1]+(0,3) &[2,1]+(7,3) &[0,1]+(7,-3) &[0,1]+(0,-3) &[0,0]+(0,-3) &[0,0]+(-3,-3) &[0,0]+(-3,0) &[0,0]+(-3,3)} \restore
  \save[0,0]+(0,5);[0,0]+(0,5) **\crv{~*=<2.5pt>{.} [0,0]+(4,5) &[0,0]+(4,0) &[0,0]+(4,-5) &[0,0]+(0,-5) &[0,-1]+(0,-5) &[0,-1]+(-3,-5) &[2,-1]+(-3,5) &[2,-1]+(0,5) &[2,0]+(0,5) &[2,0]+(4,5) &[2,0]+(4,0) &[2,0]+(4,-5) &[2,0]+(0,-5) &[2,-1]+(0,-5) &[2,-1]+(-5,-5) &[2,-1]+(-5,0) &[0,-1]+(-5,0) &[0,-1]+(-5,5) &[0,-1]+(0,5)} \restore
  \edge@/_1pc/[d]_(0.7){\alpha_2} \edge[d]^(0.6){\alpha_1} &1 &&&&&2 \edge@/^/[drr]^{\beta_1} \edge@/_/[drr]_{\beta_2} \edge@/_2.5pc/[drr]_{\beta_3} &&&&2 \edge@/_/[dll]_{\beta_1} \edge@/^/[dll]^{\beta_2} \edge@/^2.5pc/[dll]^{\beta_3}  \\
&1 \edge@/_1pc/[d]_(0.4){\alpha_2} \edge[d]^(0.4){\alpha_1} & &&&& &2 \edge@/_1pc/[d]_(0.4){\beta_2} \edge[d]^(0.4){\beta_1} \edge@/^2pc/[d]^(0.4){\beta_3}  & &&&& &&&1  \\
&2 & &&&& &1 &&  \\  
&&&&2 \edge@/_/[dl]_{\beta_2} \edge[d]^{\beta_1} \edge@/^1pc/[dr]^{\beta_3} & &&&&&&2 \edge@/_1pc/[d]_(0.4){\beta_2} \edge[d]^(0.4){\beta_1} \edge@/^1pc/[dr]^{\beta_3} &&&2 \edge@/_1pc/[d]_(0.4){\beta_2} \edge[d]^(0.4){\beta_1} \edge@/^1pc/[dr]^{\beta_3}  \\
&&&1 \horizpool{2} \edge@/_1pc/[dr]_{\alpha_1} \edge@/_3pc/[dr]_{\alpha_2} &1 \edge[d]_{\alpha_2} \edge@/^1pc/[d]^{\alpha_1} &1 &&&&&&1 \horizpool{3} &1 &&1 \save[0,0]+(0,5);[0,0]+(0,5) **\crv{~*=<2.5pt>{.} [0,1]+(0,5) &[0,1]+(4,5) &[0,1]+(4,0) &[1,1]+(4,4) &[1,1]+(0,4) &[1,1]+(-4,4) &[1,-3]+(4,4) &[1,-3]+(0,4) &[1,-3]+(-5,4) &[0,-3]+(-5,0) &[0,-3]+(-5,5) &[0,-3]+(0,5) &[0,-3]+(5,5) &[0,-3]+(5,0) &[0,-3]+(5,-8) &[0,0]+(-5,-8) &[0,0]+(-5,0) &[0,0]+(-5,5)} \restore
&1  \\
&&&&2 & &&&&&& &&& &
}$$
\centerline{Figure \ref{ex8.17}}
\end{figure}

We remark that the lower right graph, showing a generic module for $\Rep \SS^{(5)}$ where $\SS^{(5)} = (S_2^2, S_1^2, 0, 0)$, does not directly reflect the standard presentation provided by Theorem \ref{thm4.6}, but results from a slight simplification.  Moreover, on the model of the example concluding Section \ref{sec1}, we omitted some redundant edges in our graph of a generic module for the component $\overline{\Rep \SS^{(4)}}$, where $\SS^{(4)} = (S_2, S_1^2, S_2, 0)$.  Detail will follow.

\subsection*{Presentations of the displayed generic modules} In the presentations $P_0/C$ of the depicted generic modules (in each case $P_0$ is a $\la_0$-projective cover of the pertinent module), the expansions of the $\S$-critical paths, relative to a chosen skeleton $\S$ of $P_0/C$, involve coefficients in $\CC$ which are algebraically independent over $\QQ$.  We give detail regarding a generic module $G$ for $\Rep \SS^{(4)}$.  Consider the skeleton $\S = \{\bz , \beta_1 \bz, \beta_2 \bz, \alpha_1 \beta_1 \bz\}$ of $G$ in $P_0 = \la_0 \bz$.  On taking $\bz = e_2$, we obtain a projective presentation of $G$  of the form $\la_0 e_2/ C$, where $C$ is generated by $\beta_3 - x_1 \beta_1 - x_2 \beta_2$, $\alpha_2 \beta_2  - x_3 \alpha_1 \beta_1$, $\alpha_1 \beta_2 -  x_4 \alpha_1 \beta_1$, and $\alpha_2 \beta_1 = x_5 \alpha_1 \beta_1$ with scalars $x_1, \dots, x_5$ which are algebraically independent over $\QQ$; see Theorem \ref{thm4.6}.  The dependence relations tying the $\S$-critical paths $\alpha_2 \beta_3$ and $\alpha_1 \beta_3$ into the basis provided by $\S$ then arise as consequences and are not visually stressed in the graph of $G$.  To be specific: Setting $z = e_2+C$, we deduce that $\alpha_1 \beta_3 z = (x_1 + x_2 x_4) \alpha_1 \beta_1 z$ and $\alpha_2 \beta_3 z = (x_1 x_5 + x_2 x_3) \alpha_1 \beta_1 z$ in $G$.

\subsection*{Reasoning}  One first ascertains that the four components of Loewy length $> 2$ (two of which generically encode uniserial modules) are detected by the $\Theta$-test; indeed,  each of the eligible generic radical layerings arises as the first entry of a minimal element in $\Img(\Theta)$.  However, the sequence $\SS^{(5)} = (S_2^2, S_1^2, 0, 0)$ does not.  We use the $\Gamma$-test to establish the component status of $\overline{\Rep \SS^{(5)}}$:  From \ref{def8.16} we know that the only realizable semisimple sequences $\SS \neq \SS^{(5)}$ which potentially govern filtrations of a generic module $G$ for $\Rep \SS^{(5)}$ satisfy the inequality $\SS < \SS^{(5)}$, as well as $\SS^* < (\SS^{(5)})^*$ for the corresponding generic socle layerings.  The only sequence $\SS$ not ruled out by this constraint is $\SS = (S_2, S_1, S_2, S_1)$.  For a check that $G \notin \Filt \SS$, we point to a similar computation in \cite[Example 6.1(b)]{GHZ}. 

Next we use the $\Gamma$-test to ascertain that, for $\SStilde = (S_1^2, S_2^2, 0, 0)$, the generic value of $\Gabull$ on $\Rep \SStilde$ is at least $2$, whence $\SStilde$ fails to be generic for an irreducible component of $\modlad$.   Given that any generic module $\Gtilde$ for $\Rep \SStilde$ decomposes as shown in the related Example \ref{ex2.8}, it is clear that both $(S_1, S_1 \oplus S_2, S_2, 0)$ and $(S_1, S_2, S_1, S_2)$ govern filtrations of $\Gtilde$; the former sequence is not realizable, but the latter is. 

For the realizable semisimple sequences $\SS$ which were not addressed directly, the varieties $\Rep \SS$ are readily seen to be contained in $\Filt \SS^{(i)}$ for one or more of the displayed radical layerings $\SS^{(i)}$, $i \le 5$.  
\end{example}

\begin{return8.12*} We keep the previous notation.  As we already saw, the pair $(\SStilde, \SStilde^*)$ fails to be a minimal value of $\Theta$.  We will now use the $\Gamma$-test to show that the closure of $\Rep \SStilde$ is nonetheless an irreducible component of $\modlad$. Indeed, $\SS$ is the only realizable $\bd$-dimensional semisimple sequence strictly smaller than $\SStilde$, and the graph of the generic module $\Gtilde$ makes it evident that $\Gtilde$ does not have a filtration governed by $\SS$.  Thus $\Gamma(\Gtilde) = 1$.  

Inspection of the generic modules for the remaining varieties $\laySS'$ (there are  $6$ other realizable $\bd$-dimensional semisimple sequences) shows that $\modlad = \Filt(\SS) \cup \Filt(\SStilde) = \overline{\laySS} \cup \overline{\Rep\SStilde}$. Therefore $\overline{\laySS}$ and $\overline{\Rep\SStilde}$ are the only irreducible components of $\modlad$.
\end{return8.12*}

%%%%%%%%%%%%%%%%%%%%%%%%%%%%%
\section{Beyond truncated path algebras}
\label{sec9}

Now $\la = KQ/I$ denotes an arbitrary path algebra modulo an admissible ideal $I$, and $\la_0 = \la_{\text{trunc}}$ will be the associated truncated path algebra.  In this situation, the subvarieties $\laySS = \Rep_\la \SS$ of $\modlad$ may have arbitrarily many irreducible components.  Yet, some of the techniques developed for $\la_0$ in Section \ref{sec8} adapt to the general situation.  We spell out some detail and point to limitations of the approach to $\modlad$ by way of the closed immersion $\modlad \hookrightarrow \Rep_\bd(\la_0)$. 

First (in \ref{sec9}.A), we focus on the irreducible components of the varieties $\overline{\laySS}$.  Since radical layerings are generically constant, we already know that these components constitute a finite set of closed irreducible subvarieties of $\modlad$ which includes the irreducible components of $\modlad$, as $\SS$ traces the realizable semisimple sequences with dimension vector $\bd$.  We will see that this pivotal collection of subvarieties $\U$ is again accessible from $Q$ and $I$ (via the projective parametrizing varieties); each of the sets $\,\U$ arises in a representation-theoretic format, pinned down by a generic module.  In other words, each $\U$ is tagged by a module $G = G(\U)$ in $\U$ which combines all of the Morita-invariant generic properties of the modules in $\U$; cf.~Section \ref{sec4}.B for precision.  However, minimal projective presentations of the modules $G(\U)$ are not always as explicit as they are in the truncated case (cf.~Theorem \ref{thm4.6}).  Instead, they surface in the following format in general: $G(\U) = P / C$, where $P$ is a projective cover of $G(\U)$ and $C$ is given by way of generators involving a fixed ``path basis" of $P$, but now with coefficients subject to a system of polynomial equations; such a system (comparatively small) is concretely available from $Q$, generators for $I$, and a skeleton $\S$ with dimension vector $\bd$.  

In \ref{sec9}.B, we will single out results which carry over from the truncated to the general case, point blank.  In \ref{sec9}.C, we will follow with observations, provisional so far, on how to transfer algebra-specific information from $\Rep_\bd(\la_0)$ to $\modlad$, so as to expedite the process of selecting the irreducible components of $\modlad$ from the  set of irreducible subvarieties $\U$ which are now in the running for potential component status. In general, the set of eligible $\U$ is even more dramatically redundant than its incarnation in the truncated case, where it is $\{\overline{\laySS}\mid \SS \ \text{realizable}\}$.  In \ref{sec9}.D, finally, we will illustrate the strategy developed in the preceding subsections.

\subsection*{\ref{sec9}.A.  A finite set of irreducible subvarieties of $\modlad$ including all components:  the irreducible components of the varieties $\overline{\laySS}$}

To gain access to generic modules for the irreducible components of $\laySS$ (or, what amounts to the same, to the irreducible components of $\overline{\laySS}$), we further whittle down the varieties $\laySS$.  The patches of the open cover $(\layS)_\S$ of $\laySS$, where $\S$ runs through the skeleta with layering $\SS$, appear to offer themselves for the purpose (cf.~ \ref{sec4}.B).  However, the varieties $\layS$ are difficult to analyze.  On the other hand, they have almost-twin siblings in the projective scenario of Section \ref{sec7} which are far more amenable to analysis.  Since we are tackling the sequences $\SS \in \Seq(\bd)$ one at a time, it is moreover advantageous to work in the small projective setting, $\grassSS$, rather than the big, $\biggrassSS$.  We start by introducing the relevant subvarieties $\grassS$ of $\grassSS$, in turn open in $\grassSS$.  They do not coincide with the subvarieties of $\grassSS$ which correspond to the subvarieties $\layS$ of $\laySS$ under the bijection of Proposition \ref{prop7.1}, but are still smaller; indeed, in general, they are not stable under the $\autlap$-action on $\grassSS$, but only under the action of the unipotent radical of $\autlap$.  It is the $\autlap$-stable hull of any $\grassS$ in $\grassSS$ (evidently again open in $\grassSS$) that is the true twin of $\layS$ in the sense of \ref{prop7.1}.  

 Let  $\SS$ be a $\bd$-dimensional semisimple sequence and $\S$ a skeleton with layering $\SS$.  As in  \ref{sec4}.B, $P_0 = \bigoplus_{1 \le r \le t} \la \bz_r$ denotes a $\la_0$-projective cover of $T = \SS_0$, equipped with a full sequence $\bz_1, \dots, \bz_t$ of top elements, such that $\S$ consists of paths in $P_0$.  Then $P = \bigoplus_{1 \le r \le t} \la z_r$ is a $\la$-projective cover of $T$, provided $z_r$ is the image of $\bz_r$ under the canonical map $P_0 \rightarrow P_0 / IP_0 = P$.  Note that, as long as the chosen top elements $\bz_r$ remain fixed, $P_0$ contains only finitely many skeleta; a fortiori, there are only finitely many with layering $\SS$.

\begin{defthm} \label{defthm9.1} Given a skeleton $\S$ with layering $\SS$, define 
$$\grassS := \bigl\{C \in \grassSS \mid  \{p\, z_r + C \mid p\, \bz_r \in \S\} \ \text{is a basis for}\ P/C\bigr\}.$$

\noindent The subsets $\grassS$,  where $\S$ traces the skeleta with layering $\SS$, form an affine open cover of $\grassSS$.   An affine incarnation of $\grassS$ in the space $\AA^N$, where $N = \{(p\, \bz_r, q\, \bz_s) \mid p\, \bz_r \in \S,\ q\, \bz_s\ \S\text{-critical},\ \term(p)=\term(q),\ \len(p) \ge \len(q)\/ \}$ may be obtained from $Q$, generators for $I$, and $\S$ by way of an implemented algorithm.
\end{defthm}

See \cite[Lemma 3.8]{classifying} or \cite[Corollary 3.8]{hier} for openness of the $\grassS$; the best reference for the fact that the $\grassS$ are affine varieties is \cite[Theorem 3.12]{hier}.  The proof of  \cite[Theorem 3.12]{hier} also provides the theoretical underpinnings for the (straightforward) algorithm to compute the $\grassS$ in their affine coordinates.  In tandem, this algorithm actually yields minimal projective presentations of generic modules for the various components of any $\grassS$. 
A computer-implementation (without proof) can be found in \cite{codes}.  Yet, for examples of moderate size, the lightweight manual computation is less laborious than feeding the pertinent data into the program.  

For emphasis, we restate, in more detail, a fact already encountered in  \ref{defobs4.4}.  A skeleton $\S$ is called \emph{realizable} in case $\grassS \ne \varnothing$.  By the preceding remarks this amounts to the same as nonemptiness of $\layS$.  (The decision whether $\S$ is realizable comes as a byproduct of the mentioned algorithm.)  

\begin{corterm} \label{corterm9.2} {\bf (1)}  Let $\SS \in \Seq(\bd)$.  Every irreducible component of $\laySS$ intersects some $\layS$ in a dense open set; here $\S$ traces the ``realizable" skeleta with layering $\SS$, i.e., those $\S$ for which $\layS \ne \varnothing$ $($equivalently $\grassS \ne \varnothing)$. Hence the set of irreducible components of $\overline{\laySS}$ equals
$$\text{ {\rm Comp}}_\la\, \SS :=  \{\overline{\D} \mid \D \ \text{is an irreducible component of}\ \layS \ \text{for some} \ \S \ \text{with layering}\ \SS\}.$$

{\bf (2)}  All irreducible components of $\modlad$ are contained in 
$\ \bigcup_{\, \SS \in \Seq(\bd)} \text{{\rm Comp}}_\la \, \SS$.
\end{corterm}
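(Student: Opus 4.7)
\emph{Part (1).} The plan is to realize $\laySS$ as a finite union of its open subvarieties $\layS$ and then apply the standard correspondence between irreducible components of an open cover and those of the ambient space. Every $\la$-module has at least one skeleton (as noted after Definition \ref{def4.3}), so each point of $\laySS$ lies in some $\layS$ with $\S$ a skeleton having layering $\SS$; and, for fixed choice of top elements $\bz_r$ in $P_0$, there are only finitely many such skeleta. By Definition and Observation \ref{defobs4.4}, each $\layS$ is open in $\laySS$. Now fix an irreducible component $\D'$ of $\laySS$. The sets $\D' \cap \layS$ form a finite open cover of $\D'$; by irreducibility of $\D'$, at least one of them is non-empty, and is then automatically dense in $\D'$. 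Conversely, the non-empty intersections $\D' \cap \layS$ are precisely the irreducible components of $\layS$, as $\D'$ traces those components of $\laySS$ that meet $\layS$.

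\emph{Passage to closures.} Let $\D$ be an irreducible component of some $\layS$, and write $\D'$ for the corresponding component of $\laySS$, namely the closure of $\D$ in $\laySS$. Since closure in $\laySS$ equals closure in $\modlad$ intersected with $\laySS$, the closure of $\D$ in $\modlad$ contains $\D'$ and therefore equals the closure of $\D'$ in $\modlad$. As $\D'$ ranges over the irreducible components of $\laySS$, the closures of the $\D'$ in $\modlad$ are exactly the irreducible components of $\overline{\laySS}$. Combined with the previous paragraph, this yields the claimed equality between $\text{Comp}_\la\, \SS$ and the set $\{\overline{\D} \mid \D \text{ a component of some } \layS\}$.

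\emph{Part (2).} Let $\C$ be an irreducible component of $\modlad$. By Proposition \ref{prop2.3}, the map $x \mapsto \SS(M_x)$ is upper semicontinuous on $\modlad$, and since $\Seq(\bd)$ is finite (in particular well partially ordered), Observation \ref{obs2.4} applies: the restriction of this map to the irreducible set $\C$ has a unique generic value $\SS$, so $\C \cap \laySS$ is non-empty and dense in $\C$. Hence $\C \subseteq \overline{\laySS}$. Because $\C$ is maximal among irreducible subsets of $\modlad$, it is a fortiori maximal among irreducible subsets of $\overline{\laySS}$, i.e., an irreducible component of $\overline{\laySS}$. Applying part (1), $\C \in \text{Comp}_\la\, \SS$, which is the desired inclusion.

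\emph{Main obstacle.} No deep step is required; the care lies in tracking closures across the inclusion $\laySS \subseteq \modlad$ and in observing that the open cover of $\laySS$ by the $\layS$ is finite, so that irreducibility of each component of $\laySS$ forces density in some single $\layS$. The essential inputs from the paper are the openness of $\layS$ in $\laySS$ recorded in Definition and Observation \ref{defobs4.4} and the upper semicontinuity of the radical layering from Proposition \ref{prop2.3}.
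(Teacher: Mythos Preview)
Your argument is correct and aligns with the paper's treatment: the paper does not give a separate proof of this corollary, instead pointing back to Definition and Observation \ref{defobs4.4} (and the surrounding remarks that every module has a skeleton and that there are only finitely many skeleta) as already containing the content. Your write-up is exactly the standard open-cover/irreducible-component bookkeeping that makes this explicit, using the same inputs the paper invokes---openness of $\layS$ in $\laySS$, local closedness of $\laySS$ in $\modlad$, and upper semicontinuity of the radical layering.
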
 

\subsection*{Conclusion} Once again, we have converted the task of representation-theoretically characterizing the irreducible components of $\modlad$ from the data $Q$, $I$, $\bd$ into a sorting problem.  However, in general, the problem of separating the ``grain from the chaff" by means of generic modules for the varieties collected in $\bigcup \text{{\rm Comp}}_\la\, \SS$ is much more complex than in the truncated case.  We do not expect a recipe leading to a meaningful overarching solution.  Rather, it appears promising to deal with the combinatorial difficulties by specializing to algebras of particular interest, such as group algebras of elementary abelian $p$-groups; for these group algebras, quiver presentations are immediate (see \cite{FrPe, FPS}, for instance, to appreciate the role they play in the theory of group representations). To date, the component problem is not even fully resolved for monomial algebras. 

\subsection*{\ref{sec9}.B. Facts which carry over from truncated to general algebras }

Let $\SS$ be a $\bd$-dimensional semisimple sequence.  We only deal with $\la$-modules in this section, whence the notation $\laySS$ is unambiguous. As in \ref{ex8.12}, one defines what it means for a submodule filtration of a $\la$-module $M$ to be \emph{governed by $\,\SS$}, and as before one denotes by $\Filt \SS$ the set of all those $x \in \modlad$ for which $M_x$ has a filtration governed by $\SS$.  As we already emphasized, most of the remarks in \ref{rem8.14} carry over to the general case, as do the definitions of $\Gamma(M)$ and $\Gabull: \modlad \rightarrow \NN$ in \ref{thmdef8.15}.  On the other hand, the equality $\overline{\laySS} = \Filt \SS$, which holds for all realizable semisimple sequences over truncated path algebras, needs to be replaced by an inclusion as follows.  

\begin{theorem} \label{thm9.3} {\rm\cite[Theorem 3.8, Corollary 3.11]{GHZ}} $\Filt \SS$ is always closed in $\modlad$ $($irrespective of whether or not $\SS$ is realizable$)$.
In particular, $\overline{\laySS} \subseteq \Filt \SS$.

Consequently, the map $\Gabull:  \modlad \rightarrow \NN$, $x \mapsto \Gamma(M_x)$ is upper semicontinuous.
\end{theorem}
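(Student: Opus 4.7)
The plan is to realize each $\Filt \SS$ as the image of a proper morphism from a closed incidence variety, from which $\overline{\laySS} \subseteq \Filt \SS$ follows at once, and then to deduce upper semicontinuity of $\Gabull$ from the closedness of the (finitely many) sets $\Filt \SS$.

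\textbf{Step 1 (closedness of $\Filt \SS$).} Set $\bd^{(l)} := \sum_{j \geq l} \udim \SS_j$ for $0 \le l \le L+1$, so that any filtration governed by $\SS$ has $l$-th term of dimension vector $\bd^{(l)}$. Let $\mathcal{F}$ denote the projective variety of flags $K^{|\bd|} = V_0 \supseteq V_1 \supseteq \cdots \supseteq V_{L+1} = 0$ in which each $V_l$ is a graded subspace of $K^{|\bd|} = \bigoplus_i K^{d_i}$ of dimension vector $\bd^{(l)}$. Inside $X := \modlad \times \mathcal{F}$, let $F$ be the subset cut out by the two conditions
$$\text{(a)}\ V_l \ \text{is a $\la$-submodule of $M_x$ for each $l$}, \qquad \text{(b)}\ J V_l \subseteq V_{l+1} \ \text{for each $l$}.$$
Both are closed polynomial conditions jointly in $(x, V_\bullet)$. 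Conditions (a) and (b) together force each $V_l/V_{l+1}$ to be a semisimple $\la$-module of dimension vector $\udim \SS_l$, hence isomorphic to $\SS_l$ (semisimple $\la$-modules are determined by dimension vector). Consequently the image of the projection $\pi \colon X \to \modlad$ is precisely $\Filt \SS$. Because $\mathcal{F}$ is projective, $\pi$ is closed, so $\Filt \SS = \pi(F)$ is closed in $\modlad$. Every $M \in \laySS$ carries its radical filtration $(J^l M)_l$, which is governed by $\SS$, so $\laySS \subseteq \Filt \SS$, and the closedness just established yields $\overline{\laySS} \subseteq \Filt \SS$.

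\textbf{Step 2 (semicontinuity of $\Gabull$).} The set $\mathcal{R}$ of realizable $\bd$-dimensional semisimple sequences is finite (each entry has dimension vector bounded by $\bd$, and there are at most $L+1$ entries). By definition $\Gamma(M_x) = |\{ \SS \in \mathcal{R} \mid x \in \Filt \SS \}|$, and so for each $n \in \NN$,
$$\Gabull^{-1}\bigl([n,\infty)\bigr) \;=\; \bigcup_{\substack{A \subseteq \mathcal{R} \\ |A| = n}} \bigcap_{\SS \in A} \Filt \SS,$$
a finite union of finite intersections of closed sets, hence closed. This is the condition of Definition \ref{def2.1}, so $\Gabull$ is upper semicontinuous.

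\textbf{Main obstacle.} The only nontrivial verification is the closedness of $F$ — specifically that the condition ``$V_l$ is a $\la$-submodule of $M_x$'' defines a closed subset of $\modlad \times \mathcal{F}$ jointly in $(x, V_\bullet)$, not merely closed in each factor separately. On affine charts of $\mathcal{F}$, $V_l$ is presented as the column span of a matrix whose entries are polynomials in the chart parameters, and the submodule condition $x_\alpha V_l^{\strt(\alpha)} \subseteq V_l^{\term(\alpha)}$ becomes the vanishing of explicit polynomial combinations of these entries with the entries of $x_\alpha$; condition (b) is handled identically. Once this Plücker/affine-chart bookkeeping is recorded, everything else in the argument is formal.
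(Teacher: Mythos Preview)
Your argument is correct. The incidence-variety construction is set up properly: graded flags of the prescribed dimension vectors form a projective variety, the submodule condition $x_\alpha V_l^{(\strt\alpha)}\subseteq V_l^{(\term\alpha)}$ and the semisimplicity condition $x_\alpha V_l^{(\strt\alpha)}\subseteq V_{l+1}^{(\term\alpha)}$ are visibly closed in the product (your ``main obstacle'' paragraph handles exactly this), and properness of the projection then gives closedness of $\Filt\SS$. The deduction of upper semicontinuity from finiteness of the set of realizable $\bd$-dimensional semisimple sequences is clean; the identity $\Gabull^{-1}([n,\infty)) = \bigcup_{|A|=n}\bigcap_{\SS\in A}\Filt\SS$ is correct as stated.

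As for comparison: the present paper is a survey and does not supply its own proof of Theorem~\ref{thm9.3}, instead citing \cite[Theorem 3.8, Corollary 3.11]{GHZ}. Your flag-variety/proper-morphism argument is the standard route to such closedness statements and is in the same spirit as the treatment in \cite{GHZ}; there is no substantive divergence to flag.
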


Caveat:  Not only may $\overline{\laySS}$ contain arbitrarily many irreducible components of the ambient variety $\modlad$, the set difference $\Filt \SS \setminus \overline{\laySS}$ may contribute to the components of $\modlad$ as well.  Indeed, the closure of this difference may in turn include an arbitrarily high number of components of $\modlad$.  For a small instance, see Example \ref{ex9.8}.

The final statement of Theorem \ref{thm9.3} has an immediate offshoot regarding the component problem.

\begin{corollary} \label{cor9.4} {\rm\cite[Corollary 3.11]{GHZ}}  Whenever $\D$ is an irreducible component of some $\laySS$ with $1 \in \Gabull(\D)$, the closure $\overline{\D}$ is an irreducible component of $\modlad$.

In particular, $\overline{\D}$ is an irreducible component of $\modlad$ in case $\D$  contains a uniserial module.
\end{corollary}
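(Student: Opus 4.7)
The plan is to leverage Theorem \ref{thm9.3}, which, in tandem with upper semicontinuity of $x \mapsto \SS(M_x)$, turns the hypothesis $1 \in \Gabull(\D)$ into a rigidity statement: it will force any irreducible component $\C$ of $\modlad$ containing $\overline{\D}$ to satisfy $\C = \overline{\D}$.

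First I would pick a point $x_0 \in \D$ with $\Gamma(M_{x_0}) = 1$; since $x_0 \in \laySS$, the radical layering $\SS = \SS(M_{x_0})$ is then the \emph{unique} realizable semisimple sequence governing any filtration of $M_{x_0}$. Now let $\C$ be an irreducible component of $\modlad$ containing $\overline{\D}$. By Proposition \ref{prop2.3} and Observation \ref{obs2.4}, the upper semicontinuous map $x \mapsto \SS(M_x)$ takes a generic value $\SS'$ on a dense open subset of $\C$; in particular $\C \cap \Rep \SS' \ne \varnothing$, so $\SS'$ is realizable, and Theorem \ref{thm9.3} yields $\C \subseteq \overline{\Rep \SS'} \subseteq \Filt \SS'$. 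Since $x_0 \in \C$, the module $M_{x_0}$ admits a filtration governed by the realizable sequence $\SS'$, and the hypothesis $\Gamma(M_{x_0}) = 1$ then forces $\SS' = \SS$.

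With this in hand, $\C \cap \laySS$ is dense and open in $\C$, hence irreducible. As $\D \subseteq \C \cap \laySS$ and $\D$ is a maximal irreducible subset of $\laySS$, maximality gives $\D = \C \cap \laySS$. Taking closures yields $\overline{\D} = \C$, whence $\overline{\D}$ is an irreducible component of $\modlad$, as claimed.

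For the final statement, it suffices to verify $\Gamma(M) = 1$ whenever $M$ is uniserial. The submodules of a uniserial $M$ form the radical chain $M \supsetneq JM \supsetneq \cdots \supsetneq J^n M \supsetneq 0$, so every submodule filtration of $M$ with semisimple quotients is obtained from the radical filtration by inserting zero quotients in extra positions. A realizable semisimple sequence, however, cannot have a zero entry followed by a nonzero one (by Nakayama's lemma, $J^l M = J^{l+1} M$ implies $J^l M = 0$), so only the radical layering $\SS(M)$ remains among the sequences counted by $\Gamma$. Hence $\Gamma(M) = 1$, and the first part of the corollary applies. The one delicate point throughout, rather a subtlety than a real obstacle, is confirming realizability of the generic radical layering $\SS'$ on $\C$ so that the appeal to $\Gamma(M_{x_0}) = 1$ is legitimate; this is immediate from non-emptiness of $\C \cap \Rep \SS'$.
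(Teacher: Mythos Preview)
Your proof is correct and is precisely the argument the paper has in mind when it calls the corollary an ``immediate offshoot'' of Theorem \ref{thm9.3}: you use the containment $\C \subseteq \overline{\Rep\SS'} \subseteq \Filt\SS'$ for the generic radical layering $\SS'$ of a component $\C \supseteq \overline{\D}$, invoke $\Gamma(M_{x_0}) = 1$ to force $\SS' = \SS$, and then conclude $\D = \C \cap \laySS$ by maximality. The uniserial addendum is handled correctly as well.
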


Caveat: In general this sufficient condition fails to be necessary; see Example \ref{ex9.8}. 

In light of semicontinuity of $\Theta: \modlad \rightarrow \Seq(\bd) \times \Seq(\bd)$, we moreover obtain: Whenever $\SS \in \Seq(\bd)$ is minimal realizable, \emph{all} irreducible components of $\laySS$ close off to irreducible components of $\modlad$.   However, beyond the minimal case, separate maximality tests are required for the individual irreducible components of $\overline{\laySS}$.   Indeed, some of the irreducible components of $\overline{\laySS}$ may remain maximal irreducible in $\modlad$, while others are embedded in strictly larger irreducible subsets; see Example \ref{ex9.9}.

\subsection*{\ref{sec9}.C. Interplay between $\modlad$ and $\Rep_\bd(\la_{\text{trunc}})$ }

We continue to abbreviate $\la_{\text{trunc}}$ to $\la_0$.  Since we are now moving back and forth between $\Rep_\bd(\la_0)$ and $\modlad$, we will use subscripts to distinguish subvarieties of $\modlad$ from varieties in $\Rep_\bd(\la_0)$ in order to avoid ambiguities.  Clearly, the semisimple $\la$-modules coincide with the semisimple $\la_0$-modules; hence we need not make a distinction between semisimple sequences over $\la$ and $\la_0$.   

As we already pointed out in \ref{sec9}.B: To pin down the components of $\modlad$, it does not suffice to locate the sequences $\SS \in \Seq(\bd)$ with the property that $\SS$ is the generic radical layering of \emph{some} irreducible component of $\modlad$.  However, the following semisimple sequences may be dealt with in one fell swoop:

\begin{observation} \label{obs9.5}  Suppose a generic $\la_0$-module $G$ for some irreducible component of $\Rep_{\bd}(\la_0)$ is defined over $\la$, i.e., $IG = 0$.  Then $\overline{\Rep_\la \SS(G)}$ is an irreducible component of $\modlad$, and the $\la$-module $G$ is generic for this component.  
\end{observation}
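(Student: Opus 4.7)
The plan is to identify $\overline{\Rep_\la \SS(G)}$ with the entire irreducible component of $\Rep_\bd(\la_0)$ for which $G$ is generic, and then to read off both maximality in $\modlad$ and genericity of $G$ there. Set $\SS := \SS(G)$. Since $\la_0$ is truncated, Theorem \ref{thm8.1} puts $\Rep_{\la_0}\SS$ in bijection (via closure) with a component of $\Rep_\bd(\la_0)$, and this component, of generic radical layering $\SS(G) = \SS$, is precisely $\C_0 := \overline{\Rep_{\la_0}\SS}$, the component for which $G$ is generic.

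The heart of the argument is to show $\C_0 \subseteq \modlad$. The set $Z := \C_0 \cap \modlad$ is closed and $\GL(\bd)$-stable in $\C_0$, cut out by the vanishing of a finite generating set of $I$. Assume for contradiction that $Z \subsetneq \C_0$; then $\C_0 \setminus Z$ is a dense open subset, so the iso-invariant module property ``$IM \ne 0$'' is generic for $\C_0$. Choosing $K_0$ as the smallest subfield of $K$ over which $\la$ is defined, $I$ admits a generating set inside $K_0Q$, so every $\sigma \in \Gal(K:\overline{K_0})$ fixes these generators and satisfies $\sigma(I) = I$; consequently ``$IM \ne 0$'' is invariant under $\Gal(K:\overline{K_0})$-induced Morita self-equivalences of $\la_0$-mod. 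By the characterization of generic modules in Definition and Theorem \ref{defthm4.5}, the generic module $G$ for $\C_0$ must then satisfy $IG \ne 0$, contradicting the hypothesis. Hence $Z = \C_0$, whence $\Rep_\la \SS = \Rep_{\la_0}\SS$ and $\overline{\Rep_\la \SS} = \C_0$ (the closure is the same whether computed in $\modlad$ or in $\Rep_\bd(\la_0)$, since $\modlad$ is closed in the latter).

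Maximality of $\C_0$ in $\modlad$ is then automatic: an irreducible closed $\D \subseteq \modlad$ containing $\C_0$ is a fortiori an irreducible closed subset of $\Rep_\bd(\la_0)$, forcing $\D = \C_0$ by maximality of $\C_0$ there. Since the Zariski topology on $\C_0$ is intrinsic, dense open subsets of $\C_0$ do not depend on the ambient variety, so the generic properties of $G$ for $\C_0 \subseteq \Rep_\bd(\la_0)$ persist for $\C_0 \subseteq \modlad$, making $G$ generic for $\C_0$ in the latter sense as well. The principal hurdle is the $\Gal(K:\overline{K_0})$-invariance of ``$IM \ne 0$'', which reduces to $I$ being defined over $\overline{K_0}$; this holds by the standard choice of $K_0$, if necessary after enlarging $K$ via Observation 2.2 of \cite{irredcompII} to guarantee that Definition and Theorem \ref{defthm4.5} applies.
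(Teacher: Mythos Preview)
Your argument follows the same route as the paper's one-line justification: establish $\C_0 := \overline{\Rep_{\la_0}\SS} \subseteq \modlad$, after which $\overline{\Rep_\la\SS} = \C_0$ and both conclusions are immediate. You supply the missing details via the defining property of generic modules (Theorem~\ref{defthm4.5}).

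There is, however, a slip in the Galois bookkeeping. Theorem~\ref{defthm4.5} applied to $\la_0$ invokes the field $K_0$ over which $\la_0$ is defined, and for a truncated path algebra this is the \emph{prime field} (see the paragraph preceding Theorem~\ref{thm4.6}). So a generic $\la_0$-module $G$ for $\C_0$ is only guaranteed to carry those generic properties invariant under the large group $\Gal\bigl(K:\overline{K_0^{(\la_0)}}\bigr)$. Your invariance check for ``$IM\ne 0$'' uses $K_0^{(\la)}$, the field of definition of $\la$, and therefore establishes invariance only under the generally \emph{smaller} subgroup $\Gal\bigl(K:\overline{K_0^{(\la)}}\bigr)$ --- which is not what Theorem~\ref{defthm4.5} demands. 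The two groups agree precisely when $I$ can be generated by elements with coefficients algebraic over the prime field; this holds in every application in Section~\ref{sec9} (monomial $I$), and the paper's own one-sentence argument tacitly relies on it as well. When $I$ genuinely requires transcendental coefficients, the inclusion $\C_0\subseteq\modlad$ can fail even though some generic $G$ satisfies $IG=0$: for instance, let $Q$ have vertices $1,2$ and arrows $\alpha,\beta:1\to 2$, take $I=\langle\alpha-\pi\beta\rangle$ with $\pi$ transcendental, and $\bd=(1,1)$; then $\C_0=\Rep_\bd(\la_0)=\AA^2$ while $\modlad$ is the line $a=\pi b$, yet the $\la_0$-generic module with parameter $x=1/\pi$ (transcendental over $\QQ$) is annihilated by $I$. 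Your final sentence gestures at this difficulty but does not resolve it; enlarging $K$ does not change the discrepancy between $K_0^{(\la_0)}$ and $K_0^{(\la)}$.
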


Indeed, under the hypothesis of  \ref{obs9.5}, the closure of $\Rep_\la \SS(G)$ in $\modlad$ coincides with the closure of $\Rep_{\la_0} \SS$ in $\Rep_\bd(\la_0)$.

We follow with a more systematic approach to pulling information on the components of $\modlad$ from the full collection of components of $\Rep_\bd(\la_0)$.
Since $\modlad$ is a subvariety of $\Rep_\bd(\la_0)$,  each irreducible component of $\modlad$ is contained in an irreducible component of $\Rep_\bd(\la_0)$.  Suppose
$$\overline{\Rep_{\la_0} \SS^{(1)}} = \Filt_{\la_0} \SS^{(1)}, \dots,  \overline{\Rep_{\la_0} \SS^{(m)}}  = \Filt_{\la_0} \SS^{(m)}$$
are the distinct irreducible components of $\Rep_\bd(\la_0)$. 
Moreover, let $\C$ be an irreducible component of some $\Mod_\la \SS$ with generic module $G$.  First, one determines which among the $\SS^{(j)}$ govern a filtration of $G$;  these are the ones for which $\C \subseteq \Filt_\la \SS^{(j)}$.  Suppose the pertinent sequences are $\SS^{(1)}, \dots, \SS^{(r)}$.

\begin{observation} \label{obs9.6} {\rm\cite[Observation 6.5]{GHZ}}  $\overline{\C}$ is an irreducible component of $\modlad$ if and only if $\,\overline{\C}$ is maximal irreducible in $\Filt_\la \SS^{(j)}$ for $1\le j \le r$.
\end{observation}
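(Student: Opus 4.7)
The plan is to organize the argument around the decomposition of $\modlad$ into the finitely many closed pieces $\Filt_\la \SS^{(j)}$ inherited from the irreducible components of $\Rep_\bd(\la_0)$, and then to treat the two implications asymmetrically.

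First I would record the preliminary ingredients. Since $\la$ is a quotient of $\la_0$, we have a closed immersion $\modlad \hookrightarrow \Rep_\bd(\la_0)$, and a submodule filtration of a $\la$-module governed by $\SS$ is the same thing as one viewed over $\la_0$; hence $\Filt_\la\SS = \Filt_{\la_0}\SS \cap \modlad$ for every $\SS$. Applying Theorem \ref{thmdef8.15}(1) to the truncated algebra $\la_0$ gives $\Rep_\bd(\la_0) = \bigcup_{j=1}^m \overline{\Rep_{\la_0}\SS^{(j)}} = \bigcup_{j=1}^m \Filt_{\la_0}\SS^{(j)}$, and intersecting with $\modlad$ yields the key decomposition
$$\modlad = \bigcup_{j=1}^m \Filt_\la \SS^{(j)}.$$
I next want the equivalence: $j \in \{1,\dots,r\}$ if and only if $\overline{\C} \subseteq \Filt_\la \SS^{(j)}$. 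By the definition of $r$, the left side reads $G \in \Filt_\la\SS^{(j)}$; since $\Filt_\la\SS^{(j)}$ is closed in $\modlad$ by Theorem \ref{thm9.3} and is $\GL(\bd)$-stable, its indicator function is an upper semicontinuous $\GL(\bd)$-invariant map whose generic (equivalently, minimum) value on $\overline{\C}$ is attained by the generic module $G$; consequently $G \in \Filt_\la\SS^{(j)}$ upgrades to $\overline{\C} \subseteq \Filt_\la\SS^{(j)}$.

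For the forward direction, suppose $\overline{\C}$ is an irreducible component of $\modlad$. For each $j \in \{1,\dots,r\}$ we have the chain $\overline{\C} \subseteq \Filt_\la\SS^{(j)} \subseteq \modlad$, so any irreducible subset of $\Filt_\la\SS^{(j)}$ containing $\overline{\C}$ is \emph{a fortiori} an irreducible subset of $\modlad$ containing $\overline{\C}$, hence equals $\overline{\C}$. This is the easy implication.

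For the reverse direction, suppose $\overline{\C}$ is maximal irreducible in every $\Filt_\la\SS^{(j)}$ for $1 \le j \le r$, and let $\D$ be an irreducible subset of $\modlad$ with $\overline{\C} \subseteq \D$. The decomposition above writes $\D$ as the finite union of its closed subsets $\D \cap \Filt_\la\SS^{(j)}$, and irreducibility of $\D$ forces $\D \subseteq \Filt_\la\SS^{(j_0)}$ for some $j_0$. The inclusions $\overline{\C} \subseteq \D \subseteq \Filt_\la\SS^{(j_0)}$ then place $j_0$ in $\{1,\dots,r\}$ by the equivalence already established, and the maximality hypothesis delivers $\D = \overline{\C}$. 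I expect the one delicate point to be the passage from $G \in \Filt_\la\SS^{(j)}$ to $\overline{\C} \subseteq \Filt_\la\SS^{(j)}$; the clean way to handle it is to treat ``having a filtration governed by $\SS^{(j)}$'' as a Morita-invariant closed condition, so that its failure would carve out a nonempty open subset of $\overline{\C}$ missed by $G$, in contradiction with the defining property of a generic module in \ref{defthm4.5}.
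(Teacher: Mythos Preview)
Your argument is correct. The paper does not supply a proof of Observation~\ref{obs9.6}; it merely records the statement with a citation to \cite[Observation 6.5]{GHZ}, having already asserted the equivalence ``$\SS^{(j)}$ governs a filtration of $G$ $\Leftrightarrow$ $\C \subseteq \Filt_\la \SS^{(j)}$'' in the preceding paragraph. Your proof fills in precisely the natural details: the covering $\modlad = \bigcup_{j=1}^m \Filt_\la \SS^{(j)}$ obtained by intersecting the $\la_0$-decomposition of Theorem~\ref{thmdef8.15}(1) with $\modlad$, the upgrade from $G \in \Filt_\la\SS^{(j)}$ to $\overline{\C} \subseteq \Filt_\la\SS^{(j)}$ via closedness and the defining property of a generic module, and then the straightforward two-way argument using irreducibility. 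One cosmetic remark: in the reverse direction you may as well take $\D$ closed from the outset (replacing $\D$ by its closure changes nothing), which makes the final equality $\D = \overline{\C}$ immediate from maximality without an extra closure step.
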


This observation leads to a lower bound for the number of irreducible components of $\modlad$.  Evidently, sharpness of this bound is witnessed by any truncated path algebra $\la$, as well as by the dimension vectors $\bd$ whose modules are annihilated by $J^2$; indeed, in that case, the $\bd$-dimensional $\la_0$-modules are all defined over $\la$.    

\begin{corollary} \label{cor9.7} {\rm\cite[Corollary 6.6]{GHZ}} Again, let $\bd$ be a dimension vector, and adopt the above notation for the irreducible components of $\Mod_\bd(\la_0)$. 
Moreover, set
$$\A_j : = \Mod_\bd(\la_0)\  \setminus \bigcup_{1 \le i \le m,\ i \ne j} \Filt_{\la_0} \SS^{(i)} \ \ \ \ \ \text{for} \ 1 \le j \le m.$$
Then the number of irreducible components of $\modlad$ is bounded from below by the number $b$ of those $j$ with the property that $\A_j \cap \modlad \ne \varnothing$. 
\end{corollary}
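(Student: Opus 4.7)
The plan is to construct an injection from the index set $J := \{j \mid \A_j \cap \modlad \ne \varnothing\}$ into the set of irreducible components of $\modlad$; since $|J| = b$, this immediately yields the stated lower bound.

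For each $j \in J$, I fix a point $x_j \in \A_j \cap \modlad$ and select an irreducible component $\C_j$ of $\modlad$ containing $x_j$. Since $\modlad$ is closed in $\Mod_\bd(\la_0)$, the set $\C_j$ is also closed and irreducible in $\Mod_\bd(\la_0)$, and hence is contained in some irreducible component $\Filt_{\la_0}\SS^{(j')}$ of the latter. If $j' \ne j$, then $x_j \in \C_j \subseteq \Filt_{\la_0}\SS^{(j')}$, contradicting $x_j \in \A_j$. Thus $\C_j \subseteq \Filt_{\la_0}\SS^{(j)}$. More importantly, for every $i \ne j$ the containment $\C_j \subseteq \Filt_{\la_0}\SS^{(i)}$ fails outright, because $\C_j$ still contains $x_j$, and $x_j \notin \Filt_{\la_0}\SS^{(i)}$ by definition of $\A_j$.

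To verify injectivity of the assignment $j \mapsto \C_j$, suppose $j_1, j_2 \in J$ with $\C_{j_1} = \C_{j_2}$, and denote this common component by $\C$. From the previous paragraph, $\C \subseteq \Filt_{\la_0}\SS^{(j_1)}$, so in particular $x_{j_2} \in \C \subseteq \Filt_{\la_0}\SS^{(j_1)}$. Since $x_{j_2} \in \A_{j_2}$, this is only possible when $j_1 = j_2$. The map is therefore injective, and $\modlad$ has at least $|J| = b$ irreducible components.

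No serious obstacle is anticipated: the argument is a two-step bookkeeping exercise, relying only on the fact that the $\Filt_{\la_0}\SS^{(i)}$ are the (closed, irreducible) components of $\Mod_\bd(\la_0)$, together with closedness of $\modlad$ inside $\Mod_\bd(\la_0)$. The only conceptual point worth underlining is that each $\C_j$ depends on the arbitrary choice of $x_j \in \A_j \cap \modlad$ (and of a component containing $x_j$); the injectivity argument shows that, however these choices are made, distinct indices $j$ nevertheless pick out genuinely distinct components of $\modlad$.
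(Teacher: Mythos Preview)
Your argument is correct. The paper does not spell out a proof of this corollary (it is imported from \cite{GHZ} and introduced as a consequence of Observation~\ref{obs9.6}), but your direct topological route is exactly the natural one: pick a witness $x_j \in \A_j \cap \modlad$, choose any irreducible component $\C_j$ of $\modlad$ through it, and use that $\C_j$, being closed irreducible in $\Mod_\bd(\la_0)$, must sit inside a single $\Filt_{\la_0}\SS^{(j')}$; the definition of $\A_j$ forces $j'=j$ and yields injectivity. This bypasses Observation~\ref{obs9.6} entirely and relies only on the identification $\overline{\Rep_{\la_0}\SS^{(i)}} = \Filt_{\la_0}\SS^{(i)}$ of the components of $\Mod_\bd(\la_0)$ together with the closed embedding $\modlad \hookrightarrow \Mod_\bd(\la_0)$, both of which the paper supplies.
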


For an example illustrating  \ref{cor9.7}, where $\modlad \ne \Rep_\bd(\la_0)$, we refer to Example \ref{ex9.8}(2).  In that instance, each of  the components of the $\bd$-dimensional $\la$-modules of Loewy length $> 2$ is properly contained in precisely one of the closures $\overline{\A_j}$.

 \subsection*{\ref{sec9}.D. Illustration}
 
 We first return to the algebra $\la$ of Example \ref{ex2.9}, to determine the irreducible components of $\Rep_{(2,2)} \la$.
 Then we will consider variants of $\la$ (each obtained from $\la$ by modding out one or two additional monomial relations) and track the changes in the number and generic behavior of the components entailed by the modifications.  In particular, the outcome will serve to back the caveats of \ref{sec9}.B,C.
 
 \begin{example} \label{ex9.8} Let $\la = \CC Q/I$ be as in Example \ref{ex2.9}, and $\bd = (2,2)$.  Recall that $Q$ consists of two vertices, $e_1$ and $e_2$, next to five arrows, two  from $e_1$ to $e_2$ labeled $\alpha_j$, and three in the opposite direction labeled $\beta_i$; the ideal $I$ is generated by $\beta_i \alpha_j$  for $i \ne j$, $\alpha_1 \beta_2$, and all paths of length $4$.
In Example \ref{ex8.17}, we discussed the irreducible components of $\Rep_\bd (\la_0)$, where $\la_0$ is the associated truncated path algebra of $\la$; we found exactly $5$ components in that case.   

\subsection*{The components of $\modlad$} This variety has precisely $8$ irreducible components.  Generic modules for $7$ of them, $\C_1, \dots, \C_7$, are graphed in Figure \ref{ex9.8}(a) below.    
\begin{figure}[ht]
$$\xymatrixrowsep{1.9pc} \xymatrixcolsep{0.8pc}
\xymatrix{
1 \dropup{G_1} \edge@/_2pc/[ddd]_{\alpha_2} \edge[d]^{\alpha_1} &&&&1 \dropup{G_2} \edge@/_2pc/[ddd]_{\alpha_1} \edge[d]^{\alpha_2} &&&&1 \dropup{G_3} \edge[dl]_{\alpha_2} \edge[dr]^{\alpha_1} & &&&1 \edge@/_/[d]_{\alpha_1} \edge@/^/[d]^{\alpha_2} &\save+<2.5ex,4ex> \drop{G_4} \restore \save+<2.5ex,-4ex> \drop{\bigoplus} \restore&&1 \edge@/_/[d]_{\alpha_1} \edge@/^/[d]^{\alpha_2}   \\
2 \edge[d]^{\beta_1} &&&&2 \edge[d]^{\beta_2} &&&2 \edge[dr]_{\beta_2} &&2 \edge[dl]^{\beta_1} & & &2 &&&2  \\
1 \edge[d]^{\alpha_1} \edge@/^1.5pc/[d]^{\alpha_2} &&&&1 \edge[d]^{\alpha_2} &&& &1  \\
2 &&&&2  \\
2 \dropup{G_5} \edge@/_2pc/[ddd]_{\beta_2} \edge[d]^(0.45){\beta_1} \edge@/^1pc/[dr]^{\beta_3} & &&&&&&2 \dropup{G_6} \edge@/_/[dl]_{\beta_1} \edge[d]^(0.35){\beta_2} \edge@/^1pc/[dr]^{\beta_3} & &&& &&&2 \dropup{G_7} \edge@/_/[dl]_{\beta_2} \edge[d]^(0.4){\beta_1} \edge@/^1pc/[dr]^{\beta_3}    \\
1 \save[0,0]+(0,3);[0,0]+(0,3) **\crv{~*=<2.5pt>{.} [0,0]+(3,3) &[0,1]+(0,3) &[0,1]+(3,3) &[0,1]+(3,0) &[2,1]+(3,0) &[2,1]+(3,-3) &[2,1]+(0,-3) &[2,0]+(0,-3) &[2,0]+(-3,-3) &[2,0]+(-3,0) &[2,0]+(-3,3) &[2,0]+(0,3) &[2,1]+(0,3) &[0,1]+(0,-3) &[0,0]+(0,-3) &[0,0]+(-3,-3) &[0,0]+(-3,0) &[0,0]+(-3,3)} \restore
 \edge[d]^(0.6){\alpha_1} &1& &&&&1  &1 \save[0,0]+(0,3);[0,0]+(0,3) **\crv{~*=<2.5pt>{.} [0,0]+(3,3) &[0,1]+(0,3) &[0,1]+(3,3) &[0,1]+(3,0) &[2,1]+(3,0) &[2,1]+(3,-3) &[2,1]+(0,-3) &[2,0]+(0,-3) &[2,0]+(-3,-3) &[2,0]+(-3,0) &[2,0]+(-3,3) &[2,0]+(0,3) &[2,1]+(0,3) &[0,1]+(0,-3) &[0,0]+(0,-3) &[0,0]+(-3,-3) &[0,0]+(-3,0) &[0,0]+(-3,3)} \restore
  \save[0,0]+(0,5);[0,0]+(0,5) **\crv{~*=<2.5pt>{.} [0,0]+(4,5) &[0,0]+(4,0) &[0,0]+(4,-5) &[0,0]+(0,-5) &[0,-1]+(0,-5) &[0,-1]+(-3,-5) &[2,-1]+(-3,5) &[2,-1]+(0,5) &[2,0]+(0,5) &[2,0]+(4,5) &[2,0]+(4,0) &[2,0]+(4,-5) &[2,0]+(0,-5) &[2,-1]+(0,-5) &[2,-1]+(-5,-5) &[2,-1]+(-5,0) &[0,-1]+(-5,0) &[0,-1]+(-5,5) &[0,-1]+(0,5)} \restore
  \edge[d]_(0.7){\alpha_2} &1 &&&&&1 \horizpool{2} &1 \edge@/_1.5pc/[d]_(0.55){\alpha_1} \edge[d]_(0.55){\alpha_2} &1 \edge[dl]^{\alpha_1} \edge@/^1.5pc/[dl]^(0.35){\alpha_2}   \\
2 \edge[d]^(0.4){\beta_1} & &&&&& &2 \edge[d]_(0.3){\beta_2} & &&&& &&2   \\
1 & &&&&& &1 &&  \\
& &&&&& &\dropvert{0}{\txt{Figure \ref{ex9.8}(a)}}
}$$
\end{figure}

\noindent The additional component $\C_8 = \overline{ \Rep_\la (S_2^2, S_1^2,0,0)}$ has generic module $G_8$ as graphed in Example \ref{ex8.17}.  Indeed, since the modules in $\C_8$ are annihilated by $J^2$, the variety $\C_8$ coincides with the closure $\overline{\Rep_{\la_0}\SS(G_8)}$ in $\Rep_\bd(\la_0)$ by Observation \ref{obs9.5}.  The same is true for the variants $\la_i$ of $\la$ presented below, and consequently we will exclude the semisimple sequence $\SS(G_8)$ from further discussion. 

We emphasize that the generic module for $\overline{\Rep_{\la_0} (S_2,S_1^2,S_2,0)}$ presented in Example \ref{ex8.17}  --  call it $G_7(\la_0)$  --  has a graph coinciding with that of $G_7$ above, even though $G_7(\la_0)$ is not defined over $\la$. Indeed, the graphs are only optimally informative in the presence of quiver and relations for the underlying algebra.  A comparison of projective presentations of $G_7 = G_7(\la)$ and $G_7(\la_0))$ will follow.

 Justification of the diagram: It is easy to check that, for all but one of the $\bd$-dimensional semisimple sequences $\SS'$ which are not among the $\SS(G_i)$, we have $\laySS' \subseteq \overline{\Rep \SS(G_j)}$ for some $j$.  As for the outsider sequence $\SS = (S_1\oplus S_2, S_1\oplus S_2, 0,0)$, we will see that $\laySS \subseteq \overline{\laySS(G_7)}$. This will permit us to conclude that the components of the $\laySS'$ with $\SS' \ne \SS(G_i)$ for $i=1,\dots,8$ do not contribute to the irreducible components of $\modlad$. 
 
 To prove that $\Rep_\la \SS \subseteq \overline{\Rep_\la \SS(G_7)}$, we observe that $\Rep_\la \SS = \Rep_{\la_0} \SS$ is irreducible with generic module $G(\SS) = (\la z_1/C_1) \oplus (\la z_2/C_2)$, where $C_1 = \la (\alpha_2 z_1 - y_1\alpha_1 z_1)$ and $C_2 = \la (\beta_2 z_2 - y_2 \beta_1 z_2) + \la (\beta_3 z_2 - y_3 \beta_1 z_2)$ with scalars $y_i \in \CC$ which are algebraically independent over $\QQ$. Both $G_7= G_7(\la)$ and $G_7(\la_0)$ have the following skeleton $\S$:
 $$\xymatrixrowsep{1.5pc} \xymatrixcolsep{0.8pc}
\xymatrix{
2 \edge[d]_{\beta_1} \edge[dr]^{\beta_3}  \\
1 \edge[d]_{\alpha_1} &1  \\
2
}$$
But the algorithm referenced in \ref{sec9}.A yields $\grass_\la (\S) \cong \AA^4$. More precisely, it shows that $\Rep_\la (\S)$ consists of the modules having a presentation as follows:
$\la e_2/C$, where $C$ is generated by 
$$\beta_2 - x_1 \beta_3 + x_1 x_3 \beta_1 \,, \ \alpha_2 \beta_1 - x_2 \alpha_1 \beta_1 \,, \ \alpha_1 \beta_3 - x_3 \alpha_1 \beta_1 \,, \ \alpha_2 \beta_3 - x_4 \alpha_1 \beta_1 \,;$$
here $(x_1,x_2,x_3,x_4)$ traces $\AA^4$. An elementary computation yields that there exists a module $M$ in $\Rep_\la(\S)$, together with a submodule $U \subseteq M$, such that $M/U \cong \la z_2/C_2$ and $U \cong \la z_1/C_1$. In particular, $M$ degenerates to $G(\SS)$, which places $G(\SS)$ into the closure of $\Rep_\la \SS(G_7)$. By contrast, the variety $\grass_{\la_0}(\S) \cong \AA^5$; in fact, the generic number of parameters of $\Rep_{\la_0} \SS(G_7)$ is $5$.
 
 The uniserial modules $G_1$, $G_2$ (resp., $G_5$, $G_6$) are clearly generic for irreducible components of $\Rep \SS(G_1)$ (resp., $\Rep \SS(G_5)$) that close off to irreducible components of $\modlad$ by Corollary \ref{cor9.4}.  To see that there are precisely two components with generic radical layering $\SS(G_1) = \SS(G_2)$, observe that there are only three realizable skeleta $\S$ with this layering: two of them, $\S_1$ and $\S_2$, are the skeleta of $G_1$, the remaining one, $\S_3$, is a skeleton of $G_2$.  The algorithm  mentioned in \ref{sec9}.A yields $\grassS \cong \AA^1$ for each of them, where $\grass(\S_1) \cap \grass(\S_2) \ne \varnothing$, while $\grass(\S_3)$ does not intersect the $\grass(\S_i)$ for $i=1,2$. This justifies the claim that $\Rep \SS(G_1)$ has precisely $2$ irreducible components, with generic modules as displayed. Similar considerations apply to $\Rep \SS(G_5) = \Rep \SS(G_6)$. This shows that our list includes all components of $\modlad$ containing modules of Loewy length $4$.

Since the semisimple sequences $(S_1,S_2,S_2,S_1)$ and $(S_2,S_1,S_1,S_2)$ are not realizable, we find that $\Gamma(G_3) = \Gamma(G_7) = 1$. Therefore, the irreducible components of $\Rep \SS(G_3)$ and $\Rep \SS(G_7)$ containing $G_3$ and $G_7$, respectively, close off to irreducible components of $\modlad$ by Corollary \ref{cor9.4}. The two realizable skeleta with layering $\SS(G_3)$ lead to the same variety $\grassS \cong \AA^1$, which shows $\Rep \SS(G_3)$ to be irreducible. Hence $\overline{\Rep \SS(G_3)}$ is an irreducible component of $\modlad$.  Analogously, so is $\overline{\Rep \SS(G_7)}$. 

A comparison of $\Theta$-values confirms that $G_4$ does not belong to $\overline{\Rep \SS(G_i)}$ for $i=3,5,6,7,8$, and since $\nullity_{\alpha_i}(G_4) = 2$ we see that $G_4$ is not in $\overline{\Rep \SS(G_i)}$ for $i=1,2$.  Moreover, irreducibility of the subvariety $\Rep \SS(G_4)$ of $\modlad$ is guaranteed by the fact that it equals $\Rep_{\la_0} \SS(G_4)$.  Therefore $\overline{\Rep \SS(G_4)}$ is an irreducible component of $\modlad$ as well.

Thus, the exhibited irreducible components of $\modlad$ are the only ones. Clearly, they are distinct.
%\smallskip

We conclude by deriving  information regarding some of the issues addressed in Section \ref{sec9}.B.
In particular, $G_4$ belongs to $\bigl(\Filt \SS(G_1)\bigr) \setminus \overline{\Rep \SS(G_1)}$. In light of \cite[Corollary 4.5]{BHT}, this attests to the fact that the closure of $\Filt \SS(G_1) \setminus \overline{\Rep \SS(G_1)}$ may contribute to the collection of irreducible components of $\modlad$, as was pointed out after Theorem \ref{thm9.3}.  Analogously, $\Rep \SS(G_4)$ is contained in the closure of $\bigl(\Filt \SS(G_2)\bigr) \setminus \overline{\Rep \SS(G_2)}$.  Finally, in light of $\Gamma(G_4) \ge 2$, the component $\overline{\Rep \SS(G_4)}$ of $\modlad$ provides a counterexample to the converse of Corollary \ref{cor9.4}. 

\subsection*{Two variants of the algebra $\la$}  In the first of the upcoming variants, we add one more monomial relation to the presentation of the algebra $\la$, to arrive at an algebra $\la_1$ with the property that the component $\overline{\Rep_\la \SS(G_7)}$ disappears over $\la_1$, while $\Rep_{\la_1} \SS(G_7)$ is still irreducible (in particular nonempty).  Variant 2 is the example announced after Corollary \ref{cor9.7}.     
%\smallskip

{\bf (1)} The changed picture for the factor algebra $\la_1 =KQ/I_1$, where $I_1 = I + \langle \alpha_1 \beta_3 \rangle$:  The variety $\Rep_\bd(\la_1)$ has only $7$ irreducible components.  The generic modules $G_1$, $G_2$, $G_3$, $G_4$, $G_6$, $G_8$ are defined over  the factor algebra $\la_1$ of $\la$ and hence the corresponding components of $\modlad$ remain intact as components of $\Rep_\bd(\la_1)$.  Generic modules $G'_5$ and $G'_7$ for $\Rep_{\la_1} \SS(G_5)$ and $\Rep_{\la_1} \SS(G_7)$, respectively, are shown in Figure \ref{ex9.8}(b) below. 
\begin{figure}[ht]
$$\xymatrixrowsep{1.75pc} \xymatrixcolsep{1pc}
\xymatrix{
2 \dropup{G'_5} \edge@/_2pc/[ddd]_{\beta_2} \edge[d]^(0.6){\beta_1} \edge@/^2.5pc/[ddd]^{\beta_3} &&&&&& &2 \dropup{G'_7}  \edge@/_/[dl]_{\beta_2} \edge[d]^(0.4){\beta_1} \edge@/^1pc/[dr]^{\beta_3}   \\
1 \edge[d]^{\alpha_1} &&&&&&1 \horizpool{2} &1 \edge[d]_(0.55){\alpha_2} &1 \edge[dl]^{\alpha_2}  \\
2 \edge[d]^(0.4){\beta_1} &&&&&& &2  \\
1
}$$
\centerline{Figure \ref{ex9.8}(b)}
\end{figure}
Observe that $G_5'$ is uniserial, and is therefore generic for an irreducible component of $\Rep_\bd(\la_1)$ by Corollary \ref{cor9.4}.   By contrast, one readily checks that $G'_7$ belongs to the irreducible component of $\Rep_\bd(\la_1)$ represented by $G_6$; to verify this, note that the socle layering of $G'_7$ is $(S_1 \oplus S_2, S_1, S_2, 0)$.
%\smallskip 

{\bf (2)} For the factor algebra $\la_2 = KQ/I_2$ of $\la$, where $I_2 = I + \langle \alpha_2 \beta_2 , \beta_2 \alpha_2 \rangle$, the variety $\Rep_\bd(\la_2)$ has precisely $5$ irreducible components.  One of them we know to be the closure of $\Rep_{\la_2} \SS(G_8)$; this closure is a component shared by all of the algebras $\la$ and $\la_i$.  The remaining components have generic radical layerings $\SStilde^{(1)} = (S_1, S_2, S_1, S_2)$, $\SStilde^{(2)} = (S_2, S_1, S_2, S_1)$, $\SStilde^{(3)} = (S_2, S_1^2, S_2, 0)$, and $\SStilde^{(4)} = (S_1^2, S_2^2, 0, 0)$, respectively.  We graph a generic module for each of the $\Rep_{\la_2} \SStilde^{(i)}$, $1 \le i \le 4$, in Figure \ref{ex9.8}(c).
\begin{figure}[ht]
$$\xymatrixrowsep{1.75pc} \xymatrixcolsep{0.8pc}
\xymatrix{
1 \dropvert5{\Rep_{\Lambda_2} \SStilde^{(1)}} \edge@/_2pc/[ddd]_{\alpha_2} \edge[d]^{\alpha_1} &&&&2 \dropvert5{\Rep_{\Lambda_2} \SStilde^{(2)}} \edge@/_2pc/[ddd]_{\beta_2} \edge[d]^(0.45){\beta_1} \edge@/^1pc/[dr]^{\beta_3} & &&& &2 \dropvert5{\Rep_{\Lambda_2} \SStilde^{(3)}} \edge[dl]_{\beta_1} \edge[dr]^{\beta_3} & &&&1 \edge@/_/[d]_{\alpha_1} \edge@/^/[d]^{\alpha_2} && \dropvert5{\Rep_{\Lambda_2} \SStilde^{(4)}} \save+<0ex,-4ex> \drop{\bigoplus} \restore&&1 \edge@/_/[d]_{\alpha_1} \edge@/^/[d]^{\alpha_2}  \\
2 \edge[d]^{\beta_1} &&&&1 
\save[0,0]+(0,3);[0,0]+(0,3) **\crv{~*=<2.5pt>{.} [0,0]+(3,3) &[0,1]+(0,3) &[0,1]+(3,3) &[0,1]+(3,0) &[2,1]+(3,0) &[2,1]+(3,-3) &[2,1]+(0,-3) &[2,0]+(0,-3) &[2,0]+(-3,-3) &[2,0]+(-3,0) &[2,0]+(-3,3) &[2,0]+(0,3) &[2,1]+(0,3) &[0,1]+(0,-3) &[0,0]+(0,-3) &[0,0]+(-3,-3) &[0,0]+(-3,0) &[0,0]+(-3,3)} \restore
\edge[d]^(0.6){\alpha_1} &1 &&&1 \edge[dr]^{\alpha_1} \edge@/_1pc/[dr]_{\alpha_2} &&1 \edge[dl]^{\alpha_1} \edge@/^1.5pc/[dl]^{\alpha_2} &&&2 &&&&2  \\
1 \edge[d]^{\alpha_1} \edge@/^1.5pc/[d]^{\alpha_2} &&&&2 \edge[d]^(0.4){\beta_1} & &&& &2   \\
2 &&&&1 &&
}$$
\centerline{Figure \ref{ex9.8}(c)}
\end{figure}

\noindent A confirmation of these claims, by means of the tools assembled in \ref{sec9}.B,C, is left to the reader. Moreover, $b = 4$ in the notation of Corollary \ref{cor9.7}.  In more detail: If $\SS^{(1)}, \dots, \SS^{(5)}$ are the generic radical layerings for the components of $\Rep_\bd(\la_0)$, sequenced as in Figure \ref{ex8.17}, then $\overline{\A_1}$ contains two components of $\Rep_\bd(\la_2)$, namely the closures of $\Rep_{\Lambda_2} \SStilde^{(1)}$ and $\Rep_{\Lambda_2} \SStilde^{(4)}$; further, $\overline{\A_2}$, $\overline{\A_4}$ contain one component of $\Rep_\bd(\la_2)$ each, while $\overline{\A_3}$ contains none. The fifth component of $\Rep_\bd(\la_2)$, finally, coincides with the closure of ${\A_5}$ in $\Rep_\bd(\la_2)$.  Note that all of the mentioned inclusions are proper (cf.~\ref{ex8.17}).
\end{example}

We conclude with an example of a monomial algebra $\Delta$ and a $\bd$-dimensional semisimple sequence $\SS$ with the property that $\Rep_{\Delta} \SS$ has two irreducible components, one of which closes off to an irreducible component of $\Rep_\bd(\Delta)$, whereas the other does not (cf.~the comments following Corollary \ref{cor9.4}). 

\begin{example} \label{ex9.9} Let $\Delta = KQ/\langle \beta_2 \alpha,\gamma_1\beta_2, \gamma_2\beta_1 \rangle$ be the monomial algebra based on the following quiver $Q$.  We include graphs of $\Delta e_1$, $\Delta e_2$ and $\Delta e_3$ for quick absorption of the ensuing argument.
$$\vcenter{\xymatrixcolsep{2pc}
\xymatrix{
1 \ar[r]^{\alpha} &2 \ar@/^/[r]^{\beta_1} \ar@/_/[r]_{\beta_2} &3 \ar@/^/[r]^{\gamma_1} \ar@/_/[r]_{\gamma_2}
 &4 &&
}} 
\vcenter{\xymatrixrowsep{1.25pc} \xymatrixcolsep{0.5pc}
\xymatrix{
1 \edge[d]_{\alpha} &&&2 \edge[dl]_{\beta_1} \edge[dr]^{\beta_2} &&&&3  \edge[dl]_{\gamma_1} \edge[dr]^{\gamma_2} \\
2 \edge[d]_{\beta_1} &&3 \edge[d]_{\gamma_1} &&3 \edge[d]^{\gamma_2} &&4 &&4  \\
3 \edge[d]_{\gamma_1} &&4 &&4 \\
4}}
$$
\noindent Let $\bd = (1, 1, 1, 1)$ and $\SS = (S_1\oplus S_2, S_3, S_4, 0)$.  Then $\Rep_\Delta(\SS)$ has two irreducible components, $\D_1$ and $\D_2$, the orbits of the modules $G_1 = S_1 \oplus (\Delta e_2/ \Delta \beta_2)$ and $G_2 = S_1 \oplus (\Delta e_2/ \Delta \beta_1)$, respectively.   The closure of $\D_1$ fails to be an irreducible component of $\Rep_\bd(\Delta)$, since $G_1$ is a degeneration of $\Delta e_1$, whence $\D_1$ is contained in the closure of $\Rep_{\Delta} (S_1, S_2, S_3, S_4)$.  On the other hand, $\overline{\D_2}$ is an irreducible component of $\Rep_\bd(\Delta)$, since $G_2$ is the only left $\Delta$-module with dimension vector $\bd$ that has positive $\gamma_2 \beta_2$-rank.

Finally, observe that $\Rep_\bd(\Delta)$ has precisely $3$ irreducible components in total, namely $\overline{\Rep_{\Delta} (S_1, S_2, S_3, S_4)}$, $\, \overline{\D_2}$, and $\overline{\Rep_{\Delta_2} (S_1 \oplus S_3, S_2 \oplus S_4, 0, 0)}$. 
\end{example}

%%%%%%%%%%%%%%%%%%%%%

\end{document}